\newcommand{\fineproof}{\hspace{\fill}$\square$}
\newcommand{\dsp}{\displaystyle}
\newcommand{\be}{\beta}
\newcommand{\pa}{\partial}
\newcommand{\s}{s}
\def \D {\Delta}
\def \d {\delta}
\def \al {\alpha}
\def \lm {\lambda}
\def \bd {\bigtriangledown}
\def \R {\Bbb R}
\def \ds {\displaystyle}
\def \O {\Omega}
\def \om {\Omega}
\def \p {\partial}
\def \ve {\varepsilon}
\def \o {\omega}
\newcommand{\scp}{\scriptscriptstyle}
\newcommand{\rdue}{\erre^2}
\def\erre{\mathbb{R}}
\newcommand{\ino}{\int_{\Omega}}
\newcommand{\beq}{\begin{equation}}
\newcommand{\eeq}{\end{equation}}
\newcommand{\graf}[1]{\left\{\begin{array}{ll}#1\end{array}\right.}
\newcommand{\ov}[1]{\overline{#1}}
\newcommand{\un}[1]{\underline{#1}}
\newtheorem{lemma}{Lemma}[section]
\newtheorem{Proposition}{Proposition}[section]
\newtheorem{theorem}{Theorem}[section]
\newtheorem{corollary}{Corollary}[section]
\newtheorem{remark}[lemma]{Remark}
\newtheorem{thm}{Theorem}
\keywords{Mean Field Equations, Alexandrov-Bol's inequality, Multiply connected domains, Sharp existence and
uniqueness results, Critical problems.}
\date{}
\begin{document}
\title[Mean Field Equations on multiply connected domains]{Existence and uniqueness for Mean Field Equations on multiply connected domains
at the critical parameter}

\author{Daniele Bartolucci$^{(1,\ddag)}$ and Chang-Shou Lin$^{(2)}$}

\thanks{2010 \textit{Mathematics Subject classification:} Primary: 35B30, 35J65, 35J91, 35J99. Secondary: 35A23, 35Q35, 49J99}
\thanks{$^{(1)}$Daniele Bartolucci, Department of Mathematics, University of Rome {\it "Tor Vergata"},\\
Via della ricerca scientifica n.1, 00133 Roma, Italy. e-mail:bartoluc@mat.uniroma2.it}
\thanks{$^{(2)}$Chang Shou Lin,  Taida Institute for Mathematical Sciences and
Center for Advanced Study in Theoretical Sciences,
National Taiwan University, Taipei, Taiwan.\\ e-mail:cslin@math.ntu.edu.tw}

\thanks{$^{(\ddag)}$Research partially supported by FIRB project {\sl
Analysis and Beyond} and by MIUR project {\sl Metodi variazionali e PDE non lineari}}

\begin{abstract}

We consider the mean field equation:
\begin{equation*}
(1)\ \ \ \
    \left\{%
\begin{array}{ll}
    \Delta u+\rho\displaystyle\frac{e^u}{\int_\Omega e^u }=0 & \hbox{in} \;\Omega, \\
    u=0  & \hbox{on }\;\partial\Omega, \\
\end{array}%
\right.
\end{equation*}

\noindent where $\Omega\subset \mathbb{R}^2$ is an open and bounded
domain of class $C^1$. In his 1992 paper, Suzuki proved that if $\Omega$ is a
simply-connected domain, then equation (1) admits a unique solution for $\rho\in[0,8\pi)$.
This result for $\Omega$ a simply-connected domain
has been extended to the case $\rho=8\pi$ by
Chang, Chen and the second author.
However, the uniqueness result for $\Omega$ a multiply-connected domain has remained a
long standing open problem which we solve positively
here for $\rho\in[0,8\pi]$. To obtain this result we need a
new version of the classical Bol's inequality suitable to be applied
on multiply-connected domains.

Our second main concern is the existence of solutions for (1) when
$\rho=8\pi$. We a obtain necessary and sufficient condition for the
solvability of the mean field equation at $\rho=8\pi$ which is expressed in
terms of the Robin's function $\gamma$ for $\Omega$.
For example, if equation (1) has no solution at $\rho=8\pi$,
then $\gamma$ has a unique nondegenerate maximum point.\\
As a by product of our results we solve the long-standing open
problem of the equivalence of canonical and microcanonical
ensembles in the Onsager's statistical description of two-dimensional turbulence
on multiply-connected domains.
\end{abstract}

\maketitle

\section{Introduction}\label{sec1}
Let $\Omega\subset \R^2$ be an open and bounded domain of class $C^1$ and
$H^1_0(\Omega)$ denote the standard Sobolev space of functions with vanishing boundary values. We define
the functional $J_\rho:H^1_0(\Omega)\mapsto \R$ as
$$
J_\rho(u)=\frac{1}{2}\int_{\Omega}|\nabla u|^2-\rho\log\int_{\Omega}e^{u},\;\forall\;u\in
H^1_0(\Omega).
$$
The Euler-Lagrange equation for $J_\rho$ has the following form:
\beq\label{eqn1.1}
    \left\{%
\begin{array}{ll}
    \Delta u+\rho\displaystyle\frac{e^u}{\int_\Omega e^u }=0 & \hbox{in} \;\Omega, \\
    u=0  & \hbox{on }\;\partial\Omega, \\
\end{array}%
\right.
\eeq
where
$\Delta=\Sigma^2_{i=1}\frac{\partial^2}{\partial
x_i^2}$ is the Laplacian operation in $\mathbb{R}^2$. Problem \eqref{eqn1.1} is relevant
to many research areas in mathematics and it has been
extensively studied for the past three decades.
In geometry, the equation in \eqref{eqn1.1} is strictly related with the local version of the prescribed constant
Gaussian curvature problem on two dimensional surfaces see for example \cite{key01}, \cite{barjga},
\cite{key06}, \cite{key08}, \cite{key11}. In statistical mechanics, problem \eqref{eqn1.1} is the mean field
limit of the Gibbs measures associated with the Onsager's description of turbulent Euler flows, as studied by Caglioti,
Lions, Marchioro and Pulvirenti \cite{kof02}, \cite{kof03}, Kiessling \cite{K}, Chanillo and Kiessling \cite{key04}
and Lin \cite{key09}. Recently, it has attracted a lot of attention because it also appeared as a limiting equation
in the self-dual Chern-Simons-Higgs model in a relativistic version of superconductivity and other gauge field theories,
see \cite{bardem}, \cite{kof01}, \cite{kof04}, \cite{key03}, \cite{key13}, \cite{kof06}, \cite{kof07},
\cite{kof08}, \cite{kof09}, \cite{tar}, \cite{yang} and references therein.

The classical Moser-Trudinger \cite{moser} inequality implies that if $\rho\leq8\pi$,
then $J_\rho$ is bounded from below and coercive on $H^1_0(\Omega)$. In this situation
it is not hard to find a global minimizer of $J_\rho$ in $H_0^1(\Omega)$. Thus,
problem \eqref{eqn1.1} always admits at least one solution for $\rho<8\pi$. In general, we can compute a degree
counting formula for problem \eqref{eqn1.1} whenever $\rho\neq8\pi m$,
where $m$ is a positive integer. C.C. Chen and the second author
\cite{key05}, \cite{key06} proved that if $\Omega$ is not simply-connected, then the Leray-Schauder degree
corresponding to the resolvent operator naturally associated with \eqref{eqn1.1} does not vanish for any $\rho\neq8\pi m$.
Therefore \eqref{eqn1.1} admits at least one solution for $\rho\neq 8m\pi$ and $\Omega$ not simply-connected.
See also \cite{mald} for another derivation of the Leary-Schauder degree for \eqref{eqn1.1} on closed surfaces.

In case $\rho\neq8\pi m$, then the degree formulas obtained in \cite{key06} do depend only on
the topology of $\Omega$. On the contrary, if $\rho=8\pi m$, then the existence of solutions for \eqref{eqn1.1}
will depend also on the geometry of $\Omega$. For example if $\Omega$ is a ball,
then  \eqref{eqn1.1} has no solutions for
$\rho=  8\pi$, while if $\Omega$ is a long and thin ellipse and/or rectangle (see \cite{kof03} and in particular
\cite{key02}), then \eqref{eqn1.1} admits at least one solution for $\rho=8\pi$. Thus, it is natural
to ask the following question:

{\bf Q:} What kind of geometries do allow the existence of a solution for \eqref{eqn1.1} with $\rho=8\pi$?

In case $\Omega$ is simply-connected, Chang, Chen and the second author
\cite{key02} already gave an answer to this question. To
state their result, we should first recall the definition of the Robin's function for $\Omega$.
We let $G(x,\,p)$ denote the Green's function of $-\Delta$ with Dirichlet boundary
conditions, uniquely defined by
$$
\left\{
\begin{array}{lll}
-\Delta G(x,p)&=& \delta_{p}\quad \mbox{in}\quad \O, \\
\hspace{0.3cm}G(x,p)&=& 0 \quad \mbox{on}\quad \partial \O,
\end{array}\right.
$$
and set
\beq\label{eqn1.2}
    \left\{%
\begin{array}{ll}
    \widetilde{G}(x,\,p)=G(x,\,p)+\displaystyle\frac{1}{2\pi}\log|x-p|, \\
    \gamma(p)=\widetilde{G}(p,\,p). \\
\end{array}%
\right.
\eeq
Hence $\gamma$ denotes the Robin's function relative to $\Omega$ and satisfies
\[
\lim_{p\rightarrow\partial\Omega}\gamma(p)=-\infty.
\]
Let $q$ be a critical point of $\gamma(p)$. Clearly $q$ is also a
critical point of $\widetilde{G}(x,\,q)$ with respect to the $x$
variable, i.e.
\beq\label{eqn1.3}
  \nabla_{x} \widetilde{G}(x,\,q)=0\,\hbox{ at }\,x=q.
\eeq

Let $D(q)$ be defined by

\beq\label{eqn1.4}
    D(q)=\lim_{\varepsilon\rightarrow 0}\int_{\Omega\setminus\
    B(q,\,\varepsilon)}\frac{e^{8\pi(\widetilde{G}(x,q)-\gamma(q))}-1}{|x-q|^4}-\int_{\Omega^{\,c}}\frac{dx}{|x-q|^4},
\eeq

\noindent where $B(q,r)$ denotes the ball of center $q$ and radius
$r$. Note that in a neighborhood of $q$,
\beq\label{2606.1}
e^{8\pi(\widetilde{G}(x,q)-\gamma(q))}-1=\sum
a_{ij}(x_i-q_i)(x_j-q_j)+O(|x-q|^3),
\eeq
where, since $\widetilde{G}(x,q)$ is harmonic in $\Omega$, $a_{11} + a_{22}=0$. In particular, by using
\eqref{2606.1}, one can check that the limit in \eqref{eqn1.4} always exists.

Now we can state the main theorem in \cite{key02}.

\begin{thm}\label{ThmA}
Let $\Omega\subset\R^2$ be an open, bounded and simply-connected domain of class $C^{1}$.
Then \eqref{eqn1.1} admits at least one solution for $\rho=8\pi$ if and only if
there exists a maximum point $q$ of $\gamma$ such that $D(q)>0$.
\end{thm}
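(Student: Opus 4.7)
The plan is to study the critical equation through a subcritical approximation $\rho_n \uparrow 8\pi$, coupled with concentration analysis and a sharp asymptotic expansion of $J_{8\pi}$ whose next-to-leading coefficient is controlled by $D(q)$. For each $\rho_n<8\pi$, Moser--Trudinger makes $J_{\rho_n}$ coercive on $H^1_0(\Omega)$, hence a minimizer $u_n$ exists and solves \eqref{eqn1.1} at $\rho=\rho_n$. One has the standard dichotomy: either $\{u_n\}$ is bounded in $H^1_0(\Omega)$, in which case a subsequence converges to a solution at $\rho=8\pi$; or $u_n$ blows up. In the latter case, blow-up analysis for mean field equations on a simply connected domain (Brezis--Merle, Nagasaki--Suzuki, Y.Y.\ Li, Ma--Wei) produces a single concentration point $q\in\Omega$ absorbing the full mass $8\pi$, and $q$ must be a critical point of $\gamma$; comparing the limiting energies among critical points forces $q$ to be a global maximum.

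\textbf{Sufficiency.} Assume $q$ is a maximum of $\gamma$ with $D(q)>0$. I would test $J_{8\pi}$ against a bubbling family $\varphi_\varepsilon\in H^1_0(\Omega)$ modelled on the standard bubble $\log\bigl(\varepsilon^2/(\varepsilon^2+|x-q|^2)^2\bigr)$ corrected by $8\pi\widetilde{G}(\cdot,q)$ away from $q$ with a suitable boundary patch, and derive the expansion
\[
J_{8\pi}(\varphi_\varepsilon)=C^*(q)-c_0\,D(q)\,\varepsilon^2+o(\varepsilon^2),\qquad \varepsilon\to 0^+,
\]
with $c_0>0$ and $C^*(q)=-8\pi(1+\log\pi)-32\pi^2\gamma(q)$. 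The value $C^*(q)$ is precisely the energy limit of any blowing-up sequence of minimizers concentrating at $q$, so $D(q)>0$ yields $\inf J_{8\pi}<C^*(q)$. This rules out the blow-up alternative for the subcritical minimizers $u_n$, and $\{u_n\}$ stays bounded, producing a solution at $\rho=8\pi$ in the limit.

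\textbf{Necessity.} Conversely, assume \eqref{eqn1.1} has a solution $u^*$ at $\rho=8\pi$. By Suzuki's uniqueness result strictly below $8\pi$ and a continuation argument, $u^*$ is a limit of subcritical minimizers and realizes $\inf J_{8\pi}$. Arguing contrapositively, if $D(q)\le 0$ at every maximum $q$ of $\gamma$, the same expansion forces $J_{8\pi}(\varphi_\varepsilon)\ge C^*(q)-o(\varepsilon^2)$ for every admissible test family, while the matching blow-up expansion
\[
\inf J_{\rho_n}=C^*(q_n)+o(1)\quad\text{as}\quad \rho_n\uparrow 8\pi
\]
shows that the subcritical infima approach $\min_q C^*(q)$ without undershooting. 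Passing to the limit, one deduces that $\inf J_{8\pi}$ cannot be attained by any bounded minimizing sequence, contradicting the existence of $u^*$ as a minimizer.

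\textbf{Main obstacle.} The technical heart is isolating the coefficient $D(q)$ in the $\varepsilon^2$-correction of $J_{8\pi}(\varphi_\varepsilon)$. The integrand driving the expansion carries a non-integrable quadratic singularity at $q$, removed only via the harmonicity relation $a_{11}+a_{22}=0$ recorded in \eqref{2606.1}; the surviving finite part then matches exactly the cancelled integral in \eqref{eqn1.4} defining $D(q)$. Making this cancellation rigorous requires matching the inner bubble profile with the outer Green's-function correction to the correct order, a Pohozaev-type identity on $\Omega\setminus B(q,\varepsilon)$ to package boundary terms through $\widetilde{G}$ and the Dirichlet condition, and delicate control of the remainder depending on the geometry of $\Omega$ near $q$. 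Packaging these ingredients into a clean two-term asymptotic formula is where the principal effort lies.
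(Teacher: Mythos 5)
Your sufficiency argument is essentially the paper's: the test-function expansion $I_{8\pi}(v_\varepsilon)=C^*(q)-8\pi\frac{D(q)}{\pi}\varepsilon^2+O(\varepsilon^3)$ (Lemma \ref{lem25.06}) played against the energy identity \eqref{eqn4.2}, which holds whenever the subcritical minimizers blow up. That half is sound.

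The necessity direction has a genuine gap. You try to conclude that if $D(q)\le 0$ at every maximum of $\gamma$ then $\inf J_{8\pi}$ equals $C^*(q)$ and is not attained, and you derive the lower bound $\inf J_{8\pi}\ge C^*(q)$ from the inequality $J_{8\pi}(\varphi_\varepsilon)\ge C^*(q)-o(\varepsilon^2)$ \emph{for bubble test families}. But a lower bound over a particular family of test functions says nothing about the infimum over all of $H^1_0(\Omega)$: if a solution exists at $\rho=8\pi$ it is a smooth, non-bubbling minimizer, and in that case the strict inequality $\inf J_{8\pi}<C^*(q)$ can (and by Corollary \ref{cor25.06} does) hold. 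The identity \eqref{eqn4.2} is itself a \emph{consequence} of the blow-up of the minimizing branch, not an independent fact, so your argument is circular: to rule out a minimizer you assume the energy identity that only holds when there is no minimizer. The counterexample on the torus mentioned after Corollary \ref{cor25.06} (equality in \eqref{equalitynecsuf} together with existence of a solution) shows that no purely energetic argument of this type can work; the Dirichlet structure must enter.

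What the paper actually does for necessity (Lemma \ref{lem4.1}) is to show that $D(q)\le 0$ at a critical point $q$ \emph{forces the unique branch of minimizers to blow up at $q$}: one constructs, via the reduction methods of \cite{EGP}, \cite{MMM}, a family of solutions blowing up at $q$ (perturbing $h$ when $q$ is degenerate or when $D(q)=0$, and re-proving Bol's inequality and uniqueness for the perturbed problem), reads off from the expansion $\rho_n-8\pi=h(q)(D_h(q)+o(1))e^{-\lambda_n}$ that these solutions live at $\rho_n<8\pi$ precisely because $D(q)\le0$, and then invokes the uniqueness theorem (Theorem \ref{thm3.1}) to identify this constructed family with the minimizing branch. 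The equivalence of Lemma \ref{lem100612.1} then gives nonexistence at $\rho=8\pi$. Your proposal contains none of these three load-bearing ingredients (the construction of blowing-up solutions at a prescribed critical point, the sign of $\rho_n-8\pi$ in terms of $D(q)$, and the uniqueness theorem used to capture the minimizing branch), and the energy comparison you substitute for them does not close.
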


As an application of Theorem \ref{ThmA}, consider a dumbbell domain $\Omega_\delta$
with two disjoint balls $B_{1}$, $B_{2}$ connected by a tube of small width
  $\delta>0$.
Let $r_1$, $r_2$ be the radius of $B_1$ and $B_2$. If
  $r_2\neq r_1$, then, by using Theorem \ref{ThmA}, we can prove that \eqref{eqn1.1} with $\Omega\equiv\Omega_\delta$
has no solutions for $\rho=8\pi$ provided that $\delta$ is sufficiently small. However if
  $r_1=r_2$ and $\Omega_\delta$ is further assumed to be symmetric with respect to $y$-axis,
  then \eqref{eqn1.1} admits a solution for $\rho=8\pi$ for any $\delta$ sufficiently small.
  See \cite{key02} for a proof of these facts and further examples.

  One of our aims is to extend Theorem \ref{ThmA} to any bounded domain of class $C^1$. In fact we have

\begin{theorem}\label{Thm1.1}
Let $\Omega\subset \R^2$ be an open and bounded domain of class $C^{1}$. Then
\eqref{eqn1.1} admits at least one solution for $\rho=8\pi$ if and only if there exists a maximum point $q$ of
$\gamma$ such that $D(q)>0$.
\end{theorem}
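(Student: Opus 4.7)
The theorem has the same form as Theorem~\ref{ThmA} but without the simple connectivity hypothesis, so the natural plan is to redo the argument of \cite{key02} and replace each step requiring simple connectivity by one valid on a general $C^1$ domain. The place where simple connectivity enters decisively in \cite{key02} is the uniqueness of solutions of \eqref{eqn1.1} for $\rho\in[0,8\pi)$ due to Suzuki, whose proof rests on the classical Alexandrov--Bol isoperimetric inequality for conformal metrics on simply connected regions. The first main step is therefore to prove the generalized Alexandrov--Bol inequality on multiply connected domains announced in the abstract, and to deduce from it uniqueness of solutions of \eqref{eqn1.1} for $\rho\in[0,8\pi]$ on any bounded $C^1$ domain. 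Granted this, the map $\rho\mapsto u_\rho$, where $u_\rho$ is the unique solution (equivalently, by Moser--Trudinger, the unique minimizer of $J_\rho$ on $H^1_0(\Omega)$), is a well defined continuous branch on $[0,8\pi)$, and Theorem~\ref{Thm1.1} reduces to whether this branch extends continuously to $\rho=8\pi$.

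For the necessity direction, assume \eqref{eqn1.1} admits a solution $u_{8\pi}$. Combining a Brezis--Merle concentration--compactness analysis with the uniqueness along the curve for $\rho<8\pi$ one shows that $u_\rho\to u_{8\pi}$ in $H^1_0(\Omega)$ without blow-up. I would then insert the standard family of bubbles $\varphi_{q,\varepsilon}$ concentrated at a point $q\in\Omega$, built from the regular part $\widetilde G(\cdot,q)-\gamma(q)$ of the Green's function, into $J_{8\pi}$ and expand as $\varepsilon\to 0$. The expansion splits into a universal divergent part plus a finite remainder whose $q$-dependence, once the contribution of $\gamma(q)$ is isolated, is a positive multiple of $D(q)$ as defined in \eqref{eqn1.4}. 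The attainment of the minimum at $u_{8\pi}$ then forces $D(q)\geq 0$ at some maximum of $\gamma$, and the strict inequality $D(q)>0$ is obtained via a second order analysis of \eqref{2606.1} using the harmonicity identity $a_{11}+a_{22}=0$.

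For the sufficiency direction, assume some maximum $q$ of $\gamma$ satisfies $D(q)>0$. The same bubble family $\varphi_{q,\varepsilon}$ then produces a strict upper bound on $\inf_{H^1_0(\Omega)} J_{8\pi}$ lying below the threshold at which concentration is forced by the Moser--Trudinger inequality. I would then pass to the limit $\rho\uparrow 8\pi$ along the unique branch $u_\rho$: if $u_\rho$ were to concentrate at some $q'\in\Omega$, then $q'$ would have to be a maximum of $\gamma$ and the expansion of the previous paragraph, applied to $J_\rho(u_\rho)$, would give $D(q')\leq 0$, contradicting the strict upper bound just obtained. Hence $u_\rho$ remains bounded in $H^1_0(\Omega)$ and a standard compactness argument yields a solution at $\rho=8\pi$ in the limit.

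The hard part is the first step: formulating and proving a version of the Alexandrov--Bol inequality for multiply connected domains sharp enough to produce uniqueness of solutions of \eqref{eqn1.1} up to and including $\rho=8\pi$. Once this is in place, the local character of both \eqref{eqn1.4} and \eqref{2606.1} ensures that the concentration--compactness and bubble expansion arguments of \cite{key02} transport to the multiply connected setting with only routine modifications, so that the obstruction $D(q)$ arises in precisely the same way as in the simply connected case.
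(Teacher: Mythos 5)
Your overall architecture --- a generalized Alexandrov--Bol inequality on multiply connected domains, hence uniqueness of the minimizing branch $u_\rho$ for all $\rho\le 8\pi$, then bubble expansions built from $\widetilde G(\cdot,q)-\gamma(q)$ to detect the sign of $D(q)$ --- is indeed the paper's architecture, and your sufficiency argument is essentially the one used there: when the branch blows up one has the exact identity $\frac{1}{8\pi}\inf J_{8\pi}=-1-\log\pi-4\pi\sup_{\overline\Omega}\gamma$ (see \eqref{eqn4.2}), and the test function concentrated at $q$ undercuts this value by $8\pi D(q)\varepsilon^2/\pi+O(\varepsilon^3)$ when $D(q)>0$, which is a contradiction.

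The necessity direction, however, has a genuine gap. You claim that attainment of the minimum at $u_{8\pi}$ ``forces $D(q)\ge 0$ at some maximum of $\gamma$,'' with strictness coming from a second order analysis of \eqref{2606.1}. But inserting the bubble family only produces an \emph{upper} bound $\inf J_{8\pi}\le J_{8\pi}(\varphi_{q,\varepsilon})$; knowing that the infimum is attained gives no lower bound on $D(q)$, and the identity $a_{11}+a_{22}=0$ in \eqref{2606.1} only guarantees that the integral defining $D(q)$ converges --- it carries no sign information. What must actually be proved is the contrapositive, and it costs real work (this is Lemma \ref{lem4.1} of the paper): if $q$ is a critical point of $\gamma$ with $D(q)\le 0$, one constructs a family of solutions blowing up at $q$ (via \cite{EGP}, \cite{MMM}, after perturbing the weight to make $q$ nondegenerate), uses the asymptotic law $\rho_n-8\pi=(D(q)+o(1))e^{-\lambda_n}$ from \eqref{eqn4.4} to see that these solutions live at $\rho_n<8\pi$, and then invokes the uniqueness theorem to identify them with the minimizing branch, which therefore blows up and leaves no solution at $\rho=8\pi$ by Lemma \ref{lem100612.1}. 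The borderline case $D(q)=0$ is more delicate still: the weight is perturbed to $h_\epsilon=h^{1-\epsilon}e^{-8\pi\epsilon\widetilde G(\cdot,q)}$, which destroys the subharmonic-extension hypothesis \eqref{eqn2.2} underlying the generalized Bol inequality, so uniqueness for the perturbed problem has to be re-established through a separate boundary-derivative estimate. None of this is a ``routine transport'' of the simply connected argument of \cite{key02}, and without it your necessity direction does not close.
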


An interesting application of Theorem \ref{Thm1.1} is the case
$\Omega=\Omega_\varepsilon=B(0,\,1)\setminus B(x_0,\,\varepsilon)$.
If $x_0=0$, then \eqref{eqn1.1} admits a solution (see for example \cite{kof03}) for $\rho=8\pi$ on
$\Omega_\varepsilon$ for any $\varepsilon\in (0,\,1)$.
However, if $x_0\neq 0$, then the Robin's function for $\Omega_\varepsilon$ (which we denote here
by $\gamma_\varepsilon (x)$) converges to the Robin's function for $B(0,\,1)$
(which we denote here by $\gamma(x)$) on any compact subset of $B(0,\,1)\setminus \{x_0\}$, see for example
\cite{BaFlu} p.198-199.
Since $0$ is the only maximum point of $\gamma$ then any maximum point $q_\varepsilon$
of $\gamma_\varepsilon$ must converge to $0$ as $\varepsilon\to 0$. But the quantity $D(0)$
relative to $B(0,\,1)$ is equal to $-1$ so that $D_\varepsilon (q_\varepsilon)<0$ provided that $\varepsilon$
is small. Hence Theorem \ref{Thm1.1} implies that \eqref{eqn1.1} on $\Omega=\Omega_\varepsilon$ has no
solutions at all for $\rho=8\pi$ and $\varepsilon$ small enough.

Consider the set of those bounded domains of class $C^1$ such that \eqref{eqn1.1} has no solutions for $\rho=8\pi$.
An interesting consequence of Theorem \ref{Thm1.1} is the closeness of this set of domains
under $C^1$ deformations. This property is essentially due to the fact that the quantity $D$ in \eqref{eqn1.3}
is stable under $C^1$ deformations.

\begin{corollary}\label{cor1.2}
Let $\{\Omega_n\}\subset \R^2$ be a sequence of open and bounded domains
of class $C^1$ and suppose that $\Omega_n$ converges to $\Omega$ in $C^1$ as
$n\rightarrow+\infty$, where $\Omega$ is an open and bounded
domain of class $C^1$ too. Suppose that equation \eqref{eqn1.1} on
$\Omega_n$ has no solutions for $\rho=8\pi$. Then equation
\eqref{eqn1.1} on $\Omega$ has no solutions for $\rho=8\pi$ as well.
\end{corollary}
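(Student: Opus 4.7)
The plan is to argue by contradiction. Assume $\Omega$ admits a solution at $\rho=8\pi$; by Theorem \ref{Thm1.1} applied to $\Omega$ this yields a maximum point $q^*\in\Omega$ of the Robin function $\gamma$ with $D(q^*)>0$. I will exhibit, for all $n$ large, a maximum $q_n$ of $\gamma_n$ (the Robin function of $\Omega_n$) with $D_n(q_n)>0$, and then invoke Theorem \ref{Thm1.1} on $\Omega_n$ to obtain a solution, contradicting the hypothesis.

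The first input is $C^1$-stability of the Green's machinery. Straightening a $C^1$-small perturbation of $\partial\Omega$ by a near-identity diffeomorphism and applying standard Schauder estimates yields $G_n\to G$ and $\widetilde G_n\to\widetilde G$ in $C^k_{\rm loc}$ off the diagonal, hence $\gamma_n\to\gamma$ in $C^k_{\rm loc}(\Omega)$; coupled with the uniform decay $\gamma_n\to-\infty$ near $\partial\Omega_n$ (by barrier arguments), all maxima of $\gamma_n$ remain in a fixed compact subset of $\Omega$. Consequently the maximum $q^*$ persists as a maximum of $\gamma_n$: if $q^*$ is nondegenerate, the implicit function theorem applied to $\nabla\gamma_n=0$ provides a nondegenerate local maximum $q_n\to q^*$ of $\gamma_n$; in the degenerate case one uses that the local Brouwer degree of $\nabla\gamma$ at the isolated maximum $q^*$ equals $+1$ and is preserved under $C^1$-small perturbation, so a critical point $q_n\to q^*$ of $\gamma_n$ exists and can be refined to a local maximum by appealing to the flow structure of $\nabla\gamma$ on a small disk about $q^*$.

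The heart of the proof is the continuity of the functional $D$: if $q_n\in\Omega_n$, $q_n\to q\in\Omega$ and $\Omega_n\to\Omega$ in $C^1$, then $D_n(q_n)\to D(q)$. I would split \eqref{eqn1.4} into a near-singularity piece on $B(q_n,r)$, an outer piece on $\Omega_n\setminus B(q_n,r)$, and the exterior integral on $\Omega_n^{\,c}$. The inner piece is uniformly $O(r)$ by the Taylor expansion \eqref{2606.1} together with the harmonic cancellation $a_{11}+a_{22}=0$, whose coefficients depend continuously on $(\Omega,q)$ through the $C^2_{\rm loc}$ convergence of $\widetilde G_n$; the remaining two pieces pass to the limit by dominated convergence, using uniform closeness of the integrands away from $q$ and the fact that $C^1$-convergence of boundaries forces the measure of $\Omega_n\triangle\Omega$ to vanish and to be located at positive distance from $q$. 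Letting $r\to 0$ after $n\to\infty$ gives $D_n(q_n)\to D(q^*)>0$, whence $D_n(q_n)>0$ for $n$ large.

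The main obstacle I anticipate is precisely this continuity step: the $|x-q|^{-4}$ singularity in \eqref{eqn1.4} is borderline non-integrable, and only the harmonic cancellation $a_{11}+a_{22}=0$, uniform in $n$, makes both the individual integrals well-defined and their dependence on $(\Omega,q)$ continuous. A secondary subtlety is that in the degenerate or multiple-maxima case one must guarantee that the selected $q_n$ is a genuine maximum of $\gamma_n$ rather than a nearby saddle; this is addressed by combining the degree computation above with the structural information that, under the standing assumption that $\Omega_n$ admits no solution at $\rho=8\pi$, the paper's main results force $\gamma_n$ to have essentially a single nondegenerate maximum, so the shadowing of $q^*$ by a local maximum of $\gamma_n$ is unambiguous for $n$ large and the contradiction via Theorem \ref{Thm1.1} closes the argument.
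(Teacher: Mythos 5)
Your two main ingredients --- the $C^1$-stability of $G$, $\gamma$ and $D$, and Theorem \ref{Thm1.1} --- are exactly what the paper has in mind (the text justifies the corollary in one line by the stability of $D$ under $C^1$ deformations), and your treatment of the continuity of $D$, in particular the role of the harmonic cancellation $a_{11}+a_{22}=0$ in taming the $|x-q|^{-4}$ singularity uniformly in $n$, is the right technical core. However, the middle step --- persisting the maximum $q^*$ of $\gamma$ \emph{forward} to a maximum of $\gamma_n$ --- is both unnecessary and not sound as written. The local degree of $\nabla\gamma$ at $q^*$ only produces a critical point of $\gamma_n$ near $q^*$, not a maximum (``flow structure'' is not an argument, and $q^*$ need not even be an isolated critical point a priori); moreover, even a genuine local maximum of $\gamma_n$ near $q^*$ would not feed into Theorem \ref{Thm1.1}, which requires a global maximum point of $\gamma_n$, and nothing prevents the global maximum of $\gamma_n$ from sitting far from $q^*$ when $\gamma$ has several maxima.

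The clean way to close the argument --- and the one your last sentences implicitly fall back on --- is to run the limit in the opposite direction. Since \eqref{eqn1.1} has no solution on $\Omega_n$, Theorem \ref{Thm1.1} together with Corollary \ref{cor1.3-add} (see Remark \ref{rem200812}) gives a unique, nondegenerate global maximum $q_n$ of $\gamma_n$ with $D_n(q_n)\leq 0$. By the uniform decay of $\gamma_n$ near $\partial\Omega_n$ the points $q_n$ stay in a fixed compact subset, so up to a subsequence $q_n\to \bar q$, and $\bar q$ is a maximum point of $\gamma$ (limits of maximizers are maximizers under locally uniform convergence; no transversality or nondegeneracy of $\gamma$ is needed here). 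Your continuity statement then yields $D(\bar q)\leq 0$, and Corollary \ref{cor1.3-add} applied to $\Omega$ forces $\bar q$ to be the \emph{unique} maximum point of $\gamma$; hence no maximum point of $\gamma$ with $D>0$ exists and Theorem \ref{Thm1.1} gives non-existence on $\Omega$. If you prefer your contrapositive framing, the same chain identifies $\bar q=q^*$ and contradicts $D(q^*)>0$. Either way, replace the implicit-function/degree persistence step by this limit argument; the stability analysis of $D$ you wrote is then all that remains to be checked.
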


Another interesting consequence of Theorem \ref{Thm1.1} is the deep
connection between the sign of $D(q)$ at a maximum point
$q$ of $\gamma$, the solvability of equation \eqref{eqn1.1} for $\rho=8\pi$
and the geometry of $\Omega$. As a consequence of \eqref{2606.1} we see that
$D(q)$ is well-defined whenever $q$ is a critical point of $\gamma$.
We will show in section \ref{sec4} that the following holds:

\begin{corollary}\label{cor1.3-add}
Let $\Omega\subset \R^2$ be an open and bounded domain of class $C^{1}$. If $D(p)\leq 0$ for a critical point $p$ of
$\gamma$, then $p$ must be a maximum point. In particular it is the unique
maximum point and is nondegenerate.
\end{corollary}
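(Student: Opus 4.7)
\bigskip

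\noindent\textbf{Proof proposal.} The plan is to argue by contradiction for each of the three assertions, combining Theorem~\ref{Thm1.1} with an asymptotic analysis of the functional $J_{8\pi}$ evaluated on concentrated bubble test functions. The essential technical input is the Chang--Chen--Lin-type asymptotic expansion: for $\phi_\lambda^q\in H^1_0(\Omega)$ the standard Liouville bubble centered at a critical point $q$ of $\gamma$ with concentration parameter $\lambda$, one has an expansion of $J_{8\pi}(\phi_\lambda^q)$ as $\lambda\to+\infty$ whose leading term depends linearly on $\gamma(q)$ and whose correction term at order $\lambda^{-2}$ depends linearly on $D(q)$; combined with the sharp Moser--Trudinger lower bound and Theorem~\ref{Thm1.1}, this expansion ties the sign of $D$ at critical points to the solvability of \eqref{eqn1.1} at $\rho=8\pi$ and to the geometry of $\gamma$.

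First I would show that $p$ is a maximum. Suppose not; since $\gamma\to-\infty$ on $\partial\Omega$ there is an interior maximum $q^\ast$ with $\gamma(q^\ast)>\gamma(p)$. Consider the unique minimizer $u_\rho$ of $J_\rho$ for $\rho\in[0,8\pi)$ provided by the uniqueness theorem of the paper. Two scenarios occur as $\rho\nearrow 8\pi$: either $u_\rho\to u^\ast$, a solution of \eqref{eqn1.1} at $\rho=8\pi$ (occurring by Theorem~\ref{Thm1.1} when some maximum $q$ has $D(q)>0$), or $u_\rho$ blows up at a single maximum $\bar q$ of $\gamma$ with $D(\bar q)\le 0$. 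In either scenario, I would combine the bubble expansion at $p$ with the uniqueness of the asymptotic profile (the solution $u^\ast$, or the concentration point $\bar q$) to show that the existence of a critical point $p\ne q^\ast,\bar q$ with $D(p)\le 0$ is incompatible with the uniqueness of $u_\rho$ along the family $\rho<8\pi$: intuitively, non-positivity of $D$ at a non-maximum critical point produces an ``extra'' admissible concentration direction that conflicts with the rigidity of the asymptotic behavior.

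For uniqueness and nondegeneracy, assume $p$ is a maximum with $D(p)\le 0$. If there were another maximum $q\ne p$, applying Step~1 to $q$ forces $D(q)\le 0$; Theorem~\ref{Thm1.1} then yields no solution at $\rho=8\pi$, so $u_\rho$ must blow up, and the uniqueness of $u_\rho$ (for $\rho<8\pi$) combined with concentration-compactness at critical mass $8\pi$ forces a unique concentration point, contradicting the availability of two distinct admissible maxima $p,q$. For nondegeneracy, if $D^2\gamma(p)$ admitted a zero eigenvector $v$, the sliding family $\phi_\lambda^{p+tv}$ would produce a degeneracy in the bubble energy expansion along the null direction, generating a one-parameter family of ``equal-energy'' concentration profiles and once again contradicting the uniqueness of the concentration point of $u_\rho$.

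The main obstacle is Step~1: the bubble expansion gives a direct comparison of energies at critical points, but when $\gamma(p)<\max\gamma$ the leading term at $p$ is strictly larger than at any true maximum, so no contradiction arises from bubble energies alone. Overcoming this requires invoking the new Bol-type inequality for multiply-connected domains developed earlier in the paper, which provides the fine control on $u_\rho$ near concentration regions needed to transport precise information between $p$ and a true maximum. This transport is exactly the ingredient that makes the multiply-connected version of the corollary genuinely more delicate than its simply-connected counterpart.
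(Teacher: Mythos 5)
There is a genuine gap, and you have in fact located it yourself: your Step~1 (showing $p$ must be a maximum) is precisely the point where the argument has no content. As you observe, the bubble-energy comparison cannot work because the leading term of the expansion at a non-maximal critical point is \emph{larger}, not smaller, than at a true maximum; and the appeal to the Bol inequality to ``transport precise information between $p$ and a true maximum'' is not an argument. The paper's actual mechanism is different. Using the sign condition $D(p)\le 0$ together with the asymptotic law \eqref{eqn4.4} for $\rho_n-8\pi$ along a blow-up sequence (and, when $p$ is degenerate or $D(p)=0$, a perturbation of the weight $h$ in the generalized problem \eqref{eqn2.1} making $p$ nondegenerate with $D<0$ while preserving the hypotheses needed for uniqueness), one \emph{constructs} a sequence of solutions blowing up at $p$ with $\rho_n<8\pi$, $\rho_n\nearrow 8\pi$ (Lemma \ref{lem4.1}). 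The uniqueness theorem (Theorem \ref{thm3.1}) then forces this sequence to coincide with the minimizing branch $u_\rho$, whose blow-up point is unique (Proposition \ref{prop4.1}) and is necessarily a maximum point of $\gamma$ because $\inf J_{8\pi}$ is pinned to $\sup_{\overline\Omega}\gamma$ (Remark \ref{rem:qmax}). This is the ingredient your sketch lacks: $D(p)\le 0$ is used not to compare energies but to guarantee that the constructed blow-up branch approaches $8\pi$ \emph{from below}, so that uniqueness can identify it with the minimizer branch. Uniqueness of the maximum then follows roughly as you suggest, since two maxima with $D\le 0$ would each have to be the unique blow-up point.

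Your nondegeneracy argument also does not work as stated: a null eigenvector of the Hessian of $\gamma$ at $p$ does not produce an actual one-parameter family of concentration profiles for the branch $u_\rho$ (there is still a single blow-up point), so no contradiction with uniqueness of the concentration point arises from the ``sliding family'' of test functions alone. The paper instead perturbs the weight, replacing $\log h$ by $\log h+\varepsilon(x_1^2-x_2^2)$ (harmonic, so hypothesis \eqref{eqn2.2} is preserved and $D_\varepsilon(p)<0$ for small $\varepsilon$), and re-applies the ``must be a maximum'' conclusion (Corollary \ref{cor4.1}) to the perturbed problem: if $p$ were degenerate in the manner of \eqref{D2.1}--\eqref{D2.2}, it could not be a maximum of $\log h_\varepsilon+4\pi\gamma$, a contradiction; the case $D(p)=0$ requires a further perturbation by $e^{-tx_1^4}$ and a separate verification that uniqueness survives even though $\log h_t$ is no longer subharmonic in $\Omega^*$. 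None of this can be bypassed by working with \eqref{eqn1.1} alone: the whole scheme forces one into the weighted equation \eqref{eqn2.1} and its uniqueness theory, which is why the paper proves Theorem \ref{thm3.1} at that level of generality.
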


It is rather interesting to note that the Robin function is an elementary function of two variables
which carries some geometric information about $\om$ \cite{BaFlu},
but which has no apparent direct connections with \eqref{eqn1.1}. Nevertheless, we are not aware
of any "elementary" proof of the result in Corollary \ref{cor1.3-add} which make no use of \eqref{eqn1.1}.

\begin{remark}\label{rem200812}
As a straightforward consequence of Theorem \ref{Thm1.1} and Corollary \ref{cor1.3-add} we obtain a
result anticipated in the abstract, that is, if no solutions exist for \eqref{eqn1.1} with
$\rho=8\pi$ then the Robin function $\gamma$ for $\Omega$ admits a unique and nondegenerate maximum point.
\end{remark}

In other words, if
$\gamma$ has more than one maximum point, then the quantity
$D$, evaluated at any critical point of $\gamma$, must be
positive. Thus, we have another consequence of Theorem \ref{Thm1.1}.

\begin{corollary}\label{cor1.3}
Let $\Omega\subset \R^2$ be an open and bounded domain of class $C^{1}$.
Suppose that $\gamma$, the Robin's function for $\Omega$, has more than one maximum point.
Then  \eqref{eqn1.1} admits at least one solution at $\rho=8\pi$.
\end{corollary}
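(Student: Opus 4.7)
The plan is to argue by contrapositive, chaining together Theorem~\ref{Thm1.1} and Corollary~\ref{cor1.3-add} which are already available at this point of the paper.

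Suppose, for contradiction, that equation \eqref{eqn1.1} has no solution at $\rho=8\pi$. By the ``only if'' part of Theorem~\ref{Thm1.1}, this means that $D(q)\le 0$ at every maximum point $q$ of the Robin's function $\gamma$. Pick any such maximum point $q_0$; in particular $q_0$ is a critical point of $\gamma$ with $D(q_0)\le 0$, so Corollary~\ref{cor1.3-add} applies and forces $q_0$ to be the \emph{unique} (and nondegenerate) maximum point of $\gamma$. This contradicts the hypothesis that $\gamma$ has more than one maximum point. Hence \eqref{eqn1.1} must admit at least one solution at $\rho=8\pi$.

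There is essentially no obstacle to overcome here beyond invoking the two previously stated results: all the analytic content (the extension of the Chang--Chen--Lin characterization to the multiply-connected setting via Theorem~\ref{Thm1.1}, and the sign/uniqueness dichotomy for $D$ at critical points of $\gamma$ via Corollary~\ref{cor1.3-add}) has already been packaged. The corollary is thus a direct logical consequence, equivalent in spirit to the statement recorded in Remark~\ref{rem200812}: non-solvability at $\rho=8\pi$ forces the Robin's function to have a single nondegenerate maximum.
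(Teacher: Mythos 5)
Your proof is correct and follows essentially the same route as the paper: the paper's version (Corollary \ref{cor4.2}, the generalization of Corollary \ref{cor1.3}) takes two distinct maximum points, uses Corollary \ref{cor4.1} (the generalization of Corollary \ref{cor1.3-add}) to conclude $D>0$ at both, and then invokes Theorem \ref{thm4.1} for existence, which is just the contrapositive phrasing of your argument. The two key ingredients and the logical content are identical.
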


Finally, the following criterion turns out to be very useful \cite{key02}, \cite{BL2} to prove existence/non-existence
of a solution for $\rho=8\pi$. Let us define
$$
\mathcal{I}_{8\pi}(\O):=\inf\limits_{u\in H^{1}_0(\O)} J_\rho(u).
$$

\begin{corollary}\label{cor25.06} Let $\Omega\subset \R^2$ be an open, bounded and multiply-connected
domain of class $C^{1}$. Then
\beq\label{equalitynecsuf}
\frac{1}{8\pi}\mathcal{I}_{8\pi}(\O)\leq-1-\log(\pi)-4\pi\sup_{x\in\overline{\Omega}}\gamma(x),
\eeq
and \eqref{eqn1.1} admits at least one solution at $\rho=8\pi$ if and only if the strict inequality holds.
\end{corollary}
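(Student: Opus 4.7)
The plan is in two steps: first, establish the inequality \eqref{equalitynecsuf} by evaluating $J_{8\pi}$ on an explicit family of Liouville-type bubbles concentrating at a maximum point of $\gamma$; second, derive the equivalence between solvability at $\rho=8\pi$ and the strict inequality via a concentration--compactness analysis of minimizing sequences, combined with Theorem~\ref{Thm1.1} for the non-trivial direction.

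To prove \eqref{equalitynecsuf}, fix a maximum point $q\in\overline{\O}$ of $\gamma$ (necessarily $q\in\O$ since $\gamma\to-\infty$ on $\p\O$) and consider the model Liouville bubble
$$
U_\varepsilon(x)=\log\frac{8\varepsilon^2}{(\varepsilon^2+|x-q|^2)^2}.
$$
Define $\varphi_\varepsilon\in H^1_0(\O)$ by matching $U_\varepsilon$, up to an explicit constant, with $8\pi\widetilde{G}(\cdot,q)-8\pi\gamma(q)$ outside a small ball $B(q,r)\Subset\O$, through an appropriate $C^0$-gluing near $|x-q|=r$. A standard asymptotic computation based on the local expansion $\widetilde{G}(x,q)=\gamma(q)+o(1)$ as $x\to q$ given by \eqref{eqn1.2} yields
$$
\frac{1}{8\pi}J_{8\pi}(\varphi_\varepsilon)\longrightarrow -1-\log\pi-4\pi\gamma(q)\qquad\text{as }\varepsilon\to 0^+,
$$
and since $\gamma(q)=\sup_{\overline{\O}}\gamma$, the inequality \eqref{equalitynecsuf} follows.

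For the implication \emph{strict inequality $\Rightarrow$ solvability}, take a minimizing sequence $\{u_n\}\subset H^1_0(\O)$ for $J_{8\pi}$. The Moser--Trudinger inequality implies $\{u_n\}$ is bounded in $H^1_0(\O)$, so up to a subsequence $u_n\rightharpoonup u_\infty$ weakly. Either the convergence is strong, in which case $u_\infty$ is a minimizer and hence a solution of \eqref{eqn1.1}, or the normalized exponential densities $e^{u_n}/\int_\O e^{u_n}$ concentrate as a Dirac mass at some $q\in\overline{\O}$. In the latter case a Brezis--Merle type blow-up analysis of the concentrating sequence (which on multiply-connected $\O$ relies on the sharp new version of Bol's inequality announced in the abstract) yields the sharp lower bound
$$
\mathcal{I}_{8\pi}(\O)=\lim_n J_{8\pi}(u_n)\geq 8\pi\bigl(-1-\log\pi-4\pi\gamma(q)\bigr)\geq 8\pi\bigl(-1-\log\pi-4\pi\sup_{\overline\O}\gamma\bigr),
$$
contradicting the strict inequality. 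Hence strong convergence occurs, a minimizer exists, and it solves \eqref{eqn1.1}.

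For the reverse implication \emph{solvability $\Rightarrow$ strict inequality}, assume \eqref{eqn1.1} admits a solution at $\rho=8\pi$. By Theorem~\ref{Thm1.1}, $\gamma$ admits a maximum point $q$ with $D(q)>0$. The key is to push the previous expansion of $J_{8\pi}(\varphi_\varepsilon)$ to the next order in $\varepsilon$. Using the local expansion \eqref{2606.1}, whose quadratic term is harmonic (the constraint $a_{11}+a_{22}=0$ forces the a priori correction of size $\varepsilon^2\log(1/\varepsilon)$ to vanish), together with the very definition \eqref{eqn1.4} of $D(q)$, one obtains
$$
\frac{1}{8\pi}J_{8\pi}(\varphi_\varepsilon)=-1-\log\pi-4\pi\gamma(q)-c_0\,\varepsilon^2 D(q)+o(\varepsilon^2)
$$
for some positive universal constant $c_0$. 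Since $D(q)>0$, this immediately gives $\mathcal{I}_{8\pi}(\O)<8\pi(-1-\log\pi-4\pi\sup_{\overline\O}\gamma)$, completing the proof.

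The main obstacle is the refined expansion used in the last step: isolating the $\varepsilon^2$-coefficient cleanly requires a careful choice of the gluing radius $r$, the harmonic cancellation encoded in \eqref{2606.1}, and delicate bookkeeping of the sub-leading $\varepsilon^2\log(1/\varepsilon)$ contributions from the inner bubble region, the outer Green-function region, and the matching annulus, so that these terms cancel and leave exactly the quantity $D(q)$ from \eqref{eqn1.4}. A secondary technical point is the sharp blow-up lower bound in the concentration--compactness step, which on multiply-connected domains is what forces the use of the new Bol-type inequality.
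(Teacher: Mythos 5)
Your overall architecture coincides with the paper's: the ``solvability $\Rightarrow$ strict inequality'' direction is exactly the paper's combination of Theorem \ref{thm4.1} (which upgrades Theorem \ref{Thm1.1} to guarantee a maximum point $q$ with $D(q)>0$) with the second-order test-function expansion of Lemma \ref{lem25.06}, where the coefficient of $\varepsilon^2$ is $-8\pi D_h(q)/\pi$ and the harmonic cancellation $b_{11}+b_{22}=0$ in \eqref{2606.1.150812} plays precisely the role you describe; and the ``strict inequality $\Rightarrow$ solvability'' direction rests, as in the paper, on the identity \eqref{eqn4.2} for the infimum in the blow-up regime. The only genuine divergence is that the paper runs the latter direction through the branch of subcritical minimizers $u_\rho$, $\rho\nearrow 8\pi$, and the equivalences of Lemma \ref{lem100612.1} (which require the uniqueness/nondegeneracy Theorem \ref{thm3.1}), whereas you work with a minimizing sequence directly at $\rho=8\pi$.

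That variant contains one concrete error. At the critical parameter the Moser--Trudinger inequality gives $\log\int_\O e^u\le \frac{1}{16\pi}\int_\O|\nabla u|^2+C$, so in $J_{8\pi}$ the Dirichlet terms cancel exactly: the functional is bounded below but \emph{not} coercive, and a minimizing sequence need not be bounded in $H^1_0(\O)$. Your dichotomy is therefore internally inconsistent: if $\{u_n\}$ really were bounded, then $e^{u_n}$ would be bounded in every $L^p$ and could not concentrate as a Dirac mass, so the second alternative would be empty. The correct dichotomy is ``bounded (hence, by uniform integrability of $e^{u_n}$ and weak lower semicontinuity, a minimizer exists) versus unbounded and concentrating,'' and in the unbounded case the sharp lower bound you invoke is a nontrivial statement about arbitrary concentrating minimizing sequences --- this is exactly the point the paper sidesteps by descending along the subcritical branch, where blow-up analysis of \emph{solutions} applies and \eqref{eqn4.2} is available from \cite{BL2}. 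Relatedly, the new Bol inequality does not enter that lower bound (which is local blow-up analysis in the style of \cite{bm}, \cite{yy}); it enters through the positivity of the first eigenvalue of the linearized operator, i.e.\ Theorem \ref{thm3.1}, which is what makes Lemma \ref{lem100612.1} and hence the whole scheme work on multiply connected domains.
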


\begin{remark}
Corollary \ref{cor25.06} is false in general if we consider the analogue version of \eqref{eqn1.1}
on the flat two-torus with periodic boundary conditions. In fact, it has been shown in \cite{linwang},
\cite{linwang2} that if
the Green's function $G(\tau;0)$ for the torus has five critical points, then the equality holds in
\eqref{equalitynecsuf} and the analogue version of \eqref{eqn1.1} has one solution for $\rho=8\pi$,
which is a counterexample to Corollary 1.4 when the domain is a torus.
\end{remark}

We observe that both Theorem \ref{Thm1.1} and Corollary \ref{cor1.3} state the existence of at least one solution.
We can say much more concerning this point. Indeed, it turns out that the proof of Theorem \ref{Thm1.1}
heavily relies on the fact that \eqref{eqn1.1} admits at most one solution for $\rho\in [0,8\pi]$.
In case $\Omega$ is simply-connected, the
uniqueness of solutions for \eqref{eqn1.1} with $\rho\in[0,\,8\pi)$
has been proved by Suzuki in  \cite{key20}. That result has been improved
by Chang, Chen and the second author \cite{key02} for $\rho=8\pi$ and then
generalized by the authors in \cite{BL2} to cover the case where Dirac data are included
in \eqref{eqn1.1}. However, the uniqueness for \eqref{eqn1.1} on multiply-connected domains
has remained an open problem for a long time. In this paper, we
answer this question affirmatively.

\begin{theorem}\label{Thm1.2}
Let $\Omega$ be an open, bounded and multiply-connected domain of class $C^1$. Then
equation \eqref{eqn1.1} admits at most one solution for $\rho\leq8\pi$. Moreover,
the first eigenvalue of the corresponding linearized problem is strictly positive for any
$\rho\leq8\pi$.
\end{theorem}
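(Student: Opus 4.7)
The plan proceeds in two stages: first, establish the spectral statement of the theorem (positivity of the first eigenvalue of the linearized operator at every solution), then deduce uniqueness from it. Writing $\mu_1(u)$ for the lowest Dirichlet eigenvalue of
\[
L_u \phi = -\Delta \phi - \rho \frac{e^u}{\int_\Omega e^u}\phi + \frac{\rho\, e^u \int_\Omega e^u \phi}{(\int_\Omega e^u)^2},
\]
the aim of stage one is to prove that $\mu_1(u) > 0$ for every solution $u$ of \eqref{eqn1.1} with $\rho \in [0, 8\pi]$.

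For the spectral step I would follow the symmetrization strategy of Suzuki (for $\rho < 8\pi$) sharpened by Chang--Chen--Lin (for the critical threshold $\rho = 8\pi$). One rearranges both $u$ and a would-be first eigenfunction level set by level set, reducing the Rayleigh quotient of $L_u$ to that of a radial eigenvalue problem on a model disk of the same ``Bol area'', for which the spectrum is explicit. The rearrangement is driven by the classical Alexandrov--Bol isoperimetric inequality applied to $\Omega_t = \{u > t\}$. On a multiply-connected $\Omega$ the super-level sets $\Omega_t$ can themselves be multiply-connected, and the classical inequality is inadequate. The remedy is the new version of Bol's inequality for multiply-connected regions established earlier in this paper: it estimates $\int_{\partial \Omega_t} e^{u/2}$ in terms of $\int_{\Omega_t} e^u$ with the sharp threshold $8\pi$ preserved, and moreover with a strict deficit coming from the non-trivial topology. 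Feeding this into the symmetrization argument yields $\mu_1(u) > 0$ for every $\rho \in [0, 8\pi]$, with strict inequality surviving up to and including the critical endpoint precisely because $\Omega$ is multiply-connected.

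Uniqueness follows from the spectral gap by a combination of Leray--Schauder degree and the implicit function theorem. At $\rho = 0$ the unique solution is $u \equiv 0$ and the Leray--Schauder degree of the resolvent operator for \eqref{eqn1.1} equals $+1$. The Brezis--Merle/Li--Shafrir a priori estimates keep the solution set compact on each $[0, 8\pi - \varepsilon]$, so the degree stays $+1$ throughout $[0, 8\pi)$. Since $\mu_1(u) > 0$ makes every solution contribute local index $+1$, there is exactly one solution for each $\rho \in [0, 8\pi)$. For the endpoint $\rho = 8\pi$, suppose by contradiction that two distinct solutions $u_1 \neq u_2$ existed; combining $\mu_1 > 0$ with the implicit function theorem applied at each of them would produce two distinct solution branches extending to $\rho = 8\pi - \varepsilon$ for $\varepsilon$ small, contradicting uniqueness just established below the critical value.

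The hard part is stage one, and inside it the interplay between the topology of the $\Omega_t$ and the sharpness of the generalized Bol inequality. When $\Omega_t$ acquires extra boundary components (inherited from holes of $\Omega$ that become exposed as $t$ decreases, or from $\Omega_t$ itself splitting into several components), the classical symmetrization comparison accrues topological error terms. These terms must be absorbed without weakening the critical constant $8\pi$, or else the spectral estimate degrades precisely at the endpoint where it is most needed. Securing a generalized Bol inequality whose deficit terms carry the correct sign for multiply-connected $\Omega$ is the heart of the argument; once it is available, the rest of the proof is a careful adaptation of the simply-connected machinery developed by Suzuki and by Chang--Chen--Lin.
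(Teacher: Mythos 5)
Your overall strategy coincides with the paper's: a spectral estimate for the linearized operator obtained by symmetrization driven by the new multiply-connected Bol inequality, followed by a continuation/degree argument to pass from nondegeneracy to uniqueness. The second stage is fine and is essentially what the paper delegates to ``known arguments''. The gap is in the first stage, and it sits exactly where the paper warns of a subtle point requiring modification at $\rho=8\pi$.

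The operator $L_u$ you write down is nonlocal, and its Rayleigh quotient cannot simply be ``reduced to a radial eigenvalue problem on a model disk'' by level-set rearrangement. The paper's way around this is to add the constant $-\int_\Omega he^u\varphi/\int_\Omega he^u$ to a putative kernel element $\varphi$, producing a function that satisfies the \emph{local} equation $\Delta\varphi+\rho he^u\varphi/\int_\Omega he^u=0$ but with boundary value $\varphi=c$ on $\partial\Omega$ and the orthogonality constraint $\int_\Omega he^u\varphi=0$, see \eqref{eq:2-17}. If $c=0$ one is reduced to showing $\lambda_2(e^v,\Omega)>0$, which is Lemma \ref{lem3.1}(II); there your picture of ``rearrangement plus the generalized Bol inequality'' applies, including, at $\rho=8\pi$, the equality-case analysis: strictness of Theorem \ref{Thm2.1} on non-simply-connected subdomains forces the sets $\{\varphi>t\}$ and $\{\varphi<s\}$ to be simply connected, so the nodal line must reach $\partial\Omega$, and constancy of $|\nabla\varphi|e^{-v}$ on level sets then contradicts the strong maximum principle. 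But the case $c\neq0$ cannot be excluded, and there the symmetrized comparison problem is no longer a Dirichlet eigenvalue problem on a ball: one must minimize over radial functions vanishing at an intermediate radius $\xi_0$ with a Neumann-type condition at the outer radius (Case 1 of the paper), or over pairs of radial functions on $B_{r_0}$ and $\R^2\setminus B_{R_0}$ matched through the constant $c$ (Case 2), and then show via the explicit solution $z(r)=(8-r^2)/(8+r^2)$ of the linearized Liouville equation that $\xi_0=\sqrt8$ and that the minimal energy equals $1$. This is the new content of the proof, not a routine adaptation of the simply-connected machinery, and your proposal does not identify it. A small but symptomatic slip: the Bol inequality is applied to the superlevel sets of the (shifted) eigenfunction, $\{\varphi>t\}$, not to those of the solution $u$.
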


The proof of Theorem \ref{Thm1.2} relies on our third new result which is the Bol's inequality on multiply-connected
domains. Let $\mathrm{w}\in C^2(\Omega)$ satisfy the differential inequality:
\beq\label{eqn1.6}
    \Delta \mathrm{w}+e^\mathrm{w}\geq 0 \,\hbox{ in }\,\Omega.
\eeq
For any relatively compact subdomain $\o\Subset \Omega$, we set
\beq\label{eqn1.7}
    m(\omega)=\int_\Omega e^\mathrm{w} dx,\quad \ell (\partial \omega)=\int_{\partial\omega}e^{\frac{\mathrm{w}}{2}}ds.
\eeq
The following inequality is the by now classical \cite{key01} Bol's isoperimetric inequality:
\begin{thm}\label{ThmB}
Suppose that $\Omega$ is an open, bounded and simply-connected domain and let
$\mathrm{w}\in C^2(\Omega)$ satisfy \eqref{eqn1.6}. Assume
\begin{equation}\label{eqn1.8}
    \int_\Omega e^\mathrm{w}dx\leq8\pi.
\end{equation}
Then $2\ell(\partial \omega)^2\geq m(\omega)(8\pi-m(\omega))$ for
any relatively compact subdomain $\omega\Subset\Omega$.
\end{thm}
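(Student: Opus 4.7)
The statement is the classical Bol isoperimetric inequality. Geometrically, I would interpret $g:=e^{\mathrm{w}}\,|dx|^2$ as a conformal metric on $\Omega$: then $m(\omega)$ is the $g$-area of $\omega$ and $\ell(\partial\omega)$ is the $g$-length of $\partial\omega$. A direct computation shows that the hypothesis $\Delta \mathrm{w}+e^{\mathrm{w}}\geq 0$ is equivalent to the Gaussian curvature bound
$K_g = -\tfrac{1}{2}e^{-\mathrm{w}}\Delta\mathrm{w}\leq \tfrac{1}{2}$,
so the claim amounts to an isoperimetric inequality for conformal metrics with curvature $\leq 1/2$ and total area $\leq 8\pi$, the comparison being the round sphere of constant curvature $1/2$ (radius $\sqrt{2}$, area $8\pi$).

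The plan is to follow the Fiala--Bol strategy using inner parallel bodies. First, by standard smoothing and approximation I would assume $\mathrm{w}\in C^{\infty}$ with $\partial\omega$ smooth, and reduce to the case where $\omega$ is a topological disk; the non-simply-connected case is handled by working component by component and filling in holes, an operation controlled by the sharp bound $\int_\Omega e^{\mathrm{w}}\leq 8\pi$. Let $T>0$ be the $g$-inradius of $\omega$ and set
$$\omega_t := \{x\in\omega : d_g(x,\partial\omega)>t\},\qquad A(t):=m(\omega_t),\qquad L(t):=\ell(\partial\omega_t)$$
for $t\in[0,T]$. At regular values of $t$, the coarea formula gives $A'(t)=-L(t)$, and the Gauss--Bonnet formula $\int_{\partial\omega_t} k_g\,ds = 2\pi\chi(\omega_t) - \int_{\omega_t} K_g\, dA_g$, combined with $\chi(\omega_t)\geq 1$ and $K_g\leq 1/2$, yields $L'(t) = -\int_{\partial\omega_t} k_g\,ds \leq -2\pi + \tfrac{1}{2}A(t)$.

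Next I would introduce the quantity $F(t):=L(t)^2+\tfrac{1}{2}A(t)^2-4\pi A(t)$. A direct computation using the two formulas above gives
$$F'(t) = 2L(t)L'(t) + A'(t)\bigl(A(t)-4\pi\bigr) = L(t)\bigl[\,2L'(t)-A(t)+4\pi\,\bigr]\leq 0,$$
so $F$ is non-increasing on $[0,T]$. Since $A(T)=L(T)=0$ we have $F(T)=0$, hence $F(0)\geq 0$, which is exactly the desired inequality $2\ell(\partial\omega)^2\geq m(\omega)\bigl(8\pi-m(\omega)\bigr)$.

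The main obstacle is the analytic and topological control of the flow $\{\omega_t\}$. The boundaries $\partial\omega_t$ develop singularities along the $g$-cut locus of $\partial\omega$, so the above differential inequalities must be interpreted in a distributional sense, and one must track possible jumps of $\chi(\omega_t)$ when components split off or the topology collapses at focal points. A careful bookkeeping shows that such jumps can only decrease $F$, so that monotonicity is preserved. The sharp total area bound $\int_{\Omega}e^{\mathrm{w}}\leq 8\pi$ enters essentially here: it caps the total Gauss curvature of $g$ by $4\pi$ (the hemisphere's worth) and matches the area of the extremal sphere, ruling out the degenerations that would otherwise obstruct the comparison.
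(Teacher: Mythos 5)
First, note that the paper does not actually prove Theorem~\ref{ThmB}: it is quoted as the classical Bol inequality with a citation to Bandle's monograph \cite{key01}, so the only comparison available is with the literature. Your outline is the Fiala--Bol inner-parallel-set argument, which is one of the two classical routes (the other being symmetrization/conformal transplantation), and its formal skeleton is correct: $K_g\leq\tfrac12$ is the right reformulation of \eqref{eqn1.6}; the identities $A'(t)=-L(t)$ and $L'(t)=-\int_{\partial\omega_t}k_g\,ds\leq-2\pi\chi(\omega_t)+\tfrac12A(t)$ are the right ingredients; and the monotonicity of $F=L^2+\tfrac12A^2-4\pi A$ together with $F(T)=0$ does deliver $2\ell(\partial\omega)^2\geq m(\omega)\bigl(8\pi-m(\omega)\bigr)$. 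It is worth observing that in this disk case your computation never uses \eqref{eqn1.8} at all: the differential inequality closes with $\chi\geq1$ and $K_g\leq\tfrac12$ alone.

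Two steps, however, are asserted rather than proved, and one of them is the entire technical content of the theorem. (a) The reduction of a multiply connected $\omega$ to disks by ``filling in holes'' is not automatic: filling a hole $D$ increases the area and decreases the boundary length, and since $m\mapsto m(8\pi-m)$ is not monotone you cannot simply pass from $\tilde\omega=\omega\cup\overline{D}$ back to $\omega$. What works is to apply the disk inequality separately to $\tilde\omega$ and to $D$ and to use $2(\ell_0+\ell_1)^2\geq2\ell_0^2+2\ell_1^2$ together with $m(\omega)+m(D)=m(\tilde\omega)\leq\int_\Omega e^{\mathrm{w}}\leq8\pi$; this is exactly where \eqref{eqn1.8} (and the simple connectedness of $\Omega$, which guarantees $\tilde\omega\subset\Omega$) really enters, and it is precisely the computation the paper carries out explicitly at the end of its proof of Theorem~\ref{Thm2.1} in the case $k=1$. (b) The sentence ``a careful bookkeeping shows that such jumps can only decrease $F$'' conceals the hard part: for a metric that is merely $C^2$ the functions $A(t)$ and $L(t)$ are not a priori differentiable, $L$ can jump at cut-locus times, and establishing that $L$ is of bounded variation with nonpositive singular part (so that the differential inequality for $F$ survives distributionally) is exactly what forced Fiala to assume analyticity and what Hartman's later work was needed to remove. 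As written, your proposal is a sound strategy with the decisive estimate left unproved; either supply that cut-locus analysis or fall back on the symmetrization proof of \cite{key01}.
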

Here and in the rest of this paper the notation $\omega\Subset\Omega$ will be always intended
to mean that $\omega$ is a relatively compact subdomain of $\Omega$.

For the sake of completeness we remark that Theorem A and the corresponding versions of
Corollaries \ref{cor1.2} and \ref{cor1.3} on simply connected domains has been generalized in \cite{BL2}
to cover the case where Dirac data are included in \eqref{eqn1.1}.
In particular, a version of Theorem B suitable to be applied to that singular case,
as well as the corresponding uniqueness result on simply connected domains has been obtained in \cite{BL1}
(see also \cite{key12}). For the corresponding existence and/or uniqueness questions on $\R^2$ or on the
flat two-torus we refer the reader to
\cite{key09}, \cite{key14}, \cite{linwang}.

We note that the assumption of simply-connectedness of $\Omega$ in
Theorem \ref{ThmB} is necessary. See the end of section \ref{sec2}
below for a counterexample to the Bol's inequality in case
$\Omega$ is an annulus. Actually our counterexample also shows
that the inequality may fail when solutions of \eqref{eqn1.6}
share some superhermonic part in the "hole" of the annulus.\\
Clearly, if $u$ solves \eqref{eqn1.1} then $\mathrm{w}=u-\log\int_\Omega e^u-\log \rho$ satisfies
\eqref{eqn1.6} with the equality sign.
This is way we extend Theorem \ref{ThmB} to the case where $\Omega$ is multiply-connected and
solutions of \eqref{eqn1.6} take constant values on $\partial \Omega$.

\begin{theorem}\label{Thm1.3}
Suppose that $\mathrm{w}\in C^2(\Omega)\cap C(\,\ov{\Omega}\,)$ satisfies \eqref{eqn1.6} with $\mathrm{w}=c$ on
$\partial\Omega$, for some constant value $c\in\R$. Then $2\ell(\partial \omega)^2\geq m(\omega)(8\pi-m(\omega))$
for any subdomain $\omega\Subset\Omega$. Furthermore, if $\omega$ is not simply-connected,
then the inequality is strict.
\end{theorem}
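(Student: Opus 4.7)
The plan is to reduce Theorem \ref{Thm1.3} to the classical (simply-connected) Bol inequality, Theorem \ref{ThmB}, by extending $\mathrm{w}$ from $\Omega$ to its simply-connected hull $\widetilde\Omega:=\Omega\cup\bigcup_i\overline{H_i}$, where $H_1,\ldots,H_k$ are the bounded connected components of $\R^2\setminus\overline\Omega$. If $m(\omega)\geq 8\pi$, the right-hand side $m(\omega)(8\pi-m(\omega))$ is non-positive and the conclusion is immediate, so we may and do assume $m(\omega)<8\pi$.

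The core of the proof is to construct, on each hole $H_i$, a function $\widetilde\mathrm{w}_i\in C^2(H_i)\cap C(\overline{H_i})$ satisfying: (a) $\widetilde\mathrm{w}_i=c$ on $\partial H_i$; (b) $\Delta\widetilde\mathrm{w}_i+e^{\widetilde\mathrm{w}_i}\geq 0$ in $H_i$; (c) the pointwise inequality $\partial_\nu \widetilde\mathrm{w}_i\leq \partial_\nu \mathrm{w}$ on $\partial H_i$, where $\nu$ is the unit normal pointing from $H_i$ into $\Omega$. Setting $\widetilde\mathrm{w}|_\Omega=\mathrm{w}$ and $\widetilde\mathrm{w}|_{H_i}=\widetilde\mathrm{w}_i$, condition (a) makes $\widetilde\mathrm{w}$ continuous across every $\partial H_i$, while (b) together with (c) guarantees $\Delta\widetilde\mathrm{w}+e^{\widetilde\mathrm{w}}\geq 0$ on $\widetilde\Omega$ in the distributional sense, since the singular part of $\Delta\widetilde\mathrm{w}$ supported on $\partial H_i$ is the non-negative line measure $(\partial_\nu\mathrm{w}-\partial_\nu\widetilde\mathrm{w}_i)\,ds$.

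Granted the extension, the classical Bol inequality (whose proof carries over to continuous distributional sub-solutions, since the positive singular contribution on $\bigcup_i\partial H_i$ only helps in the co-area analysis) applied to $\omega\Subset\widetilde\Omega$ yields $2\ell(\partial\omega)^2\geq m(\omega)(8\pi-m(\omega))$, as $\omega\subset\Omega$ leaves $m(\omega)$ and $\ell(\partial\omega)$ unchanged when $\mathrm{w}$ is replaced by $\widetilde\mathrm{w}$. For the strict inequality when $\omega$ is not simply connected, we invoke the equality case of the classical Bol inequality: equality forces $\omega$ to be a geodesic disk in the conformal metric $e^{\widetilde\mathrm{w}}(dx_1^2+dx_2^2)$, in particular a topological disk; hence a non-simply-connected $\omega$ rules out equality.

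The main obstacle lies in the construction of the extensions $\widetilde\mathrm{w}_i$ satisfying (b) and (c) jointly. The naive choice $\widetilde\mathrm{w}_i\equiv c$ fails in general: on the annulus $\Omega=\{1<|x|<2\}$, the explicit radial sub-solution $\mathrm{w}(r)=-\alpha(r-1)(2-r)$ with small $\alpha>0$ has strictly negative normal derivative $\mathrm{w}'(1^+)=-\alpha$ at the inner boundary, while the constant extension has vanishing normal derivative there, producing a negative singular contribution to $\Delta\widetilde\mathrm{w}$ at $\partial H$ that violates the distributional sub-solution property. One must therefore exploit the freedom in the inequality (b): for instance by taking $\widetilde\mathrm{w}_i=c+\lambda\psi_i$, with $\psi_i>0$ the solution of a suitable auxiliary Dirichlet problem on $H_i$ and $\lambda>0$ chosen large enough that $\partial_\nu\widetilde\mathrm{w}_i$ lies pointwise at most $\partial_\nu\mathrm{w}$ along $\partial H_i$; the sub-solution property (b) is then preserved thanks to the correspondingly large positive contribution of $e^{\widetilde\mathrm{w}_i}$ in the interior of $H_i$.
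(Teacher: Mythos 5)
Your diagnosis of the constant extension is correct and perceptive: for a mere subsolution of \eqref{eqn1.6} with constant boundary values there is no sign information on $\partial_\nu\mathrm{w}$ along the hole boundaries, so the interface measure produced by extending $\mathrm{w}$ by $c$ can be negative, exactly as in your radial example. (This is in fact the route the paper takes: it extends $u=\mathrm{w}-c$ by zero as in \eqref{eqn2.3} and imports the sign $\partial_\nu u\leq 0$ from the argument of Lemma \ref{lem2.1}, where that sign comes from the positivity of solutions of \eqref{eqn2.1}; your example shows it is not automatic here.) The genuine gap is in your repair. Once you take $\widetilde{\mathrm{w}}_i=c+\lambda\psi_i$ with $\lambda$ large, the mass $\int_{H_i}e^{\widetilde{\mathrm{w}}_i}$ placed in the holes becomes arbitrarily large, so the hypothesis \eqref{eqn1.8} of Theorem \ref{ThmB} fails for $\widetilde{\mathrm{w}}$ on $\widetilde\Omega$. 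That hypothesis is not cosmetic for a multiply-connected $\omega$: the isoperimetric inequality for such $\omega$ inside a simply-connected ambient is obtained by applying the disk case to $\omega$ together with its holes and to each hole separately, and the resulting algebra (see the $k=1$ computation closing the proof of Theorem \ref{Thm2.1}) only works if the \emph{total} mass, including what sits inside the holes of $\omega$, is at most $8\pi$. Worse, the mass blow-up is not an artifact of your choice of $\psi_i$: by the divergence theorem any extension satisfying your (a)--(c) obeys $\int_{H_i}e^{\widetilde{\mathrm{w}}_i}\geq-\int_{H_i}\Delta\widetilde{\mathrm{w}}_i\geq-\int_{\partial H_i}\partial_\nu\mathrm{w}$, which is precisely large when $\mathrm{w}$ has a strong superharmonic part near a hole --- the scenario of the counterexample at the end of Section \ref{sec2}. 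So for non-simply-connected $\omega$ the step ``extend, then quote Theorem \ref{ThmB} for $\omega\Subset\widetilde\Omega$'' is quoting essentially the statement to be proved.

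By contrast, the paper never applies Bol's inequality to a multiply-connected subdomain of the extended domain. It splits $\partial\omega=\partial_0\omega\cup\partial_1\omega$, applies the mollified disk case (Lemma \ref{lem2.2}) only to simply-connected pieces whose extended mass is below $8\pi$, handles the pieces of mass at least $8\pi$ with Nehari's inequality, and balances the two contributions through the three-case bookkeeping of Theorem \ref{Thm2.1}; that case analysis is exactly what absorbs the extra mass lodged in the holes, and it is what your one-line reduction skips. Two further technical points: $\partial_\nu\mathrm{w}$ need not exist or be bounded for $\mathrm{w}\in C^2(\Omega)\cap C(\,\ov{\Omega}\,)$, so the jump computation on $\partial H_i$ already presupposes one-sided $C^1$ regularity; and your large-$\lambda$ construction must keep $\Delta\psi_i\geq 0$ in a collar of $\partial H_i$ (e.g.\ $\psi_i$ harmonic there), since near $\partial H_i$ one has $e^{c+\lambda\psi_i}\to e^{c}$ and this cannot compensate a negative term of order $\lambda$.
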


With the aid
of the Bol's inequality, then Theorem \ref{Thm1.2} is proved by following the arguments
due to Chang-Chen and the second author in \cite{key02}. However it seems that this procedure is not well-known and,
in particular, there is a subtle point which also requires some modification exactly
for the case $\rho=8\pi$. We will therefore provide a complete
proof of it in section \ref{sec3}.

A general remark is in order at this point.

\begin{remark}
Actually the proof of Theorem \ref{Thm1.1} requires an estimate for the sign of a first eigenvalue and
a uniqueness result suitable to be applied
to a larger class of equations than \eqref{eqn1.1}, see problem \eqref{eqn2.1} in section \ref{sec2}.
These are the content of Theorem \ref{thm3.1} in section \ref{sec3}, a truly more general result of
independent interest which calls up among other things for a new generalization of the Bol's inequality for
\eqref{eqn2.1} on multiply connected domains, see Theorem \ref{Thm2.1} below. The proof of Theorem \ref{Thm1.3}
will be derived as a straightforward consequence at the very end of section \ref{sec2}. A similar observation
holds for Theorem \ref{Thm1.1} and Corollaries \ref{cor1.3-add}, \ref{cor1.3} and \ref{cor25.06} which has not
been discussed in this introduction in their full generality to avoid technicalities. We refer to Theorem \ref{thm4.1}, Corollaries
\ref{cor4.1}, \ref{cor4.2} and \ref{cor25.06-gen} and Theorem \ref{thm4.2} for further details concerning this point.
\end{remark}

As a by product of our results, we are able to solve another long-standing open problem in the
rigorous statistical mechanics description of turbulent two-dimensional flows, see \cite{kof02}, \cite{kof03}
and \cite{K}. Two main variational tools has been used so far to understand the thermodynamical equilibrium
of two-dimensional turbulent Euler flows: the microcanonical and canonical variational principles, see
\eqref{mvp} and \eqref{cvp} in section \ref{sec5}. By using the uniqueness result in \cite{key20} and other results
already obtained in \cite{kof02}, in \cite{kof03} the authors were able to establish the \un{equivalence}
of these two variational principles, that is, the fact that they predict exactly the same thermodynamic.
This result was achieved under certain assumptions (see Proposition 3.3 in \cite{kof03}), one of which being the
simply-connectedness of $\om$. It seems that this restriction was entirely due to the fact that
uniqueness was known only on simply-connected domains. Therefore, as a corollary of Theorem \ref{Thm1.2}, we
are able to fill this gap and obtain the equivalence of microcanonical and canonical ensembles on multiply-connected
domains as well, see Theorem \ref{Equiv26.06} in section \ref{sec5} below. Actually, Theorem \ref{Thm1.1} and
Corollary \ref{cor25.06} provide an answer to another problem arising in \cite{kof03}, see Theorem \ref{Thm5.3}.
We refer to section \ref{sec5} for further details concerning this point.

\bigskip
\bigskip

This paper is organized as follows. In section \ref{sec2}, the Bol's inequality for multiply-connected
domains is proved together with the above mentioned counterexample. We will provide the proof
of Theorem \ref{Thm1.2} in section \ref{sec3}. A more general version of
Theorem \ref{Thm1.1} and Corollaries \ref{cor1.3-add}, \ref{cor1.3} and \ref{cor25.06} will be proved in section \ref{sec4}.
Finally section \ref{sec5} is devoted to the statistical mechanics applications.

\section{The Bol's inequality on multiply connected domains}\label{sec2}
\setcounter{equation}{0}

Let $\Omega$ be an open, bounded and multiply-connected domain of class $C^1$. In this section we
let $\overline{\Omega^\ast}$ be the closure of the union of the bounded components of
$\mathbb{R}^2\setminus\partial\Omega$ and
$\Omega^\ast=\overline{\Omega^\ast}\setminus\partial\overline{\Omega^\ast}$.
Clearly, $\Omega\subseteq\Omega^\ast$ and $\Omega^\ast\equiv\Omega$ if and only if $\Omega$ is simply-connected.\\
Actually the proof of Theorem \ref{Thm1.1} requires a uniqueness result suitable to be applied
to a larger class of equations than \eqref{eqn1.1}. Therefore we consider the more general problem
\begin{equation}\label{eqn2.1}
    \left\{%
\begin{array}{ll}
    \Delta u+\rho\displaystyle\frac{h(x)e^u}{\int_\Omega h(x)e^u}=0 & \hbox{ in }\,\Omega, \\
    u=0 & \hbox{ on }\,\partial\Omega, \\
\end{array}%
\right.
\end{equation}
where, here and in the rest of this paper, we assume that
\begin{eqnarray}\nonumber
&&h(x)
\hbox{\,\,is\,\,strictly positive\,\,and\,\,Lipschitz\,\,continuous\,\,in\,\,}\ov{\O^\ast}\\
&& \hbox{\,and\,\,}\log h(x)\,\,\hbox{is\,\,subharmonic\,\,in}\,\,\ov{\O^\ast}\label{eqn2.2}
\hbox{\,and\,\,harmonic\,\,in\,\,}\O.
\end{eqnarray}

Let $u$ be a solution of \eqref{eqn2.1}. We define
\begin{equation}\label{eqn2.3}
    \widehat{u}(x)=\left\{%
\begin{array}{ll}
    u(x) & \hbox{ if }\,x\in\Omega, \\
    0 & \hbox{ if }\,x\in\Omega^\ast\setminus\Omega. \\
\end{array}%
\right.
\end{equation}

\noindent Then $\widehat{u}$ satisfies the following inequality
\begin{equation}\label{eqn2.4}
    \Delta \widehat{u}+\rho\frac{h(x)e^{\widehat{u}}}{\int_\Omega
    h(x)e^{u}}\geq0\,\hbox{ in }\,\Omega^\ast
\end{equation}
\noindent in the sense of distributions. Although it is standard,
we would like to provide a proof of (\ref{eqn2.4}) here for the sake of completeness.

\begin{lemma}\label{lem2.1}
Let $\widehat{u}$ be defined by (\ref{eqn2.3}). Then $\widehat{u}$ satisfies
(\ref{eqn2.4}) in the sense of distributions.
\end{lemma}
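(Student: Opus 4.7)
The plan is to test the claimed distributional inequality against an arbitrary nonnegative $\varphi\in C^\infty_c(\Omega^\ast)$, split the integral along $\partial\Omega$, and collect the terms that survive.

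First I would observe that $\widehat{u}$ is continuous across $\partial\Omega$, because $u=0$ on $\partial\Omega$ matches the extension by $0$ into the holes; however, $\nabla\widehat{u}$ will in general jump across the inner components of $\partial\Omega$, which is exactly what will produce the nonnegative distributional contribution. Note also that since $\varphi$ has compact support in $\Omega^\ast$, it vanishes in a neighborhood of the outer component of $\partial\Omega$, so any boundary integral over $\partial\Omega$ that appears after integration by parts reduces to an integral over the boundaries of the holes $\Omega^\ast\setminus\overline{\Omega}$.

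Next, I would compute
\[
\langle \Delta\widehat{u}+\rho\tfrac{h e^{\widehat{u}}}{\int_\Omega h e^u},\,\varphi\rangle
=\int_{\Omega^\ast}\widehat{u}\,\Delta\varphi+\rho\int_{\Omega^\ast}\frac{h e^{\widehat{u}}}{\int_\Omega h e^u}\varphi,
\]
splitting each integral into the pieces on $\Omega$ and on $\Omega^\ast\setminus\overline{\Omega}$. On the second piece $\widehat{u}\equiv 0$, so $\int_{\Omega^\ast\setminus\Omega}\widehat{u}\Delta\varphi=0$, while the exponential integral there contributes $\rho\int_{\Omega^\ast\setminus\Omega}\frac{h\varphi}{\int_\Omega h e^u}\geq 0$. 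On $\Omega$, since $u\in C^2(\Omega)\cap C(\overline{\Omega})$ with $u=0$ on $\partial\Omega$, Green's identity gives
\[
\int_\Omega u\,\Delta\varphi=\int_\Omega \varphi\,\Delta u-\int_{\partial\Omega}\varphi\,\partial_\nu u\,ds,
\]
where $\nu$ denotes the outer normal to $\Omega$, and the term with $u\,\partial_\nu\varphi$ drops out because $u=0$ on $\partial\Omega$. Using the equation $\Delta u=-\rho\frac{he^u}{\int_\Omega h e^u}$ in $\Omega$, the volume term cancels exactly against $\rho\int_\Omega\frac{h e^u \varphi}{\int_\Omega h e^u}$, and what remains is
\[
-\int_{\partial\Omega}\varphi\,\partial_\nu u\,ds+\rho\int_{\Omega^\ast\setminus\Omega}\frac{h\varphi}{\int_\Omega h e^u}.
\]

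The main (and essentially only) point is then to show $\partial_\nu u\le 0$ on $\partial\Omega$ for $\nu$ the outer normal to $\Omega$. This is immediate from the maximum principle: the right-hand side of \eqref{eqn2.1} is strictly positive, so $\Delta u<0$ in $\Omega$, and combined with $u=0$ on $\partial\Omega$ one has $u\ge 0$ in $\Omega$ (in fact $u>0$ in $\Omega$ by the strong maximum principle). Hence $u$ attains its minimum on $\partial\Omega$ and the outward normal derivative satisfies $\partial_\nu u\le 0$ there, so the boundary term contributes a nonnegative quantity. Since $\varphi\ge 0$ was arbitrary and the remaining bulk term is manifestly nonnegative, we conclude that $\Delta\widehat{u}+\rho\frac{h e^{\widehat{u}}}{\int_\Omega h e^u}\ge 0$ in $\mathcal{D}'(\Omega^\ast)$. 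The only technical care required is the identification of $\partial\Omega^\ast$ with the outer boundary of $\Omega$ (so that $\varphi$ genuinely vanishes there) and the fact that $u\in C^1(\overline{\Omega})$ — guaranteed here by $\partial\Omega\in C^1$ and standard elliptic regularity — so that the boundary integral makes classical sense.
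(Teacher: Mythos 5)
Your proposal is correct and follows essentially the same route as the paper: test against a nonnegative test function, integrate by parts on $\Omega$ (the boundary term $u\,\partial_\nu\varphi$ vanishing since $u=0$ on $\partial\Omega$), use the equation to cancel the bulk term, and observe that the surviving term $-\int_{\partial\Omega}\varphi\,\partial_\nu u$ is nonnegative because $u>0$ in $\Omega$ forces $\partial_\nu u\leq 0$ on $\partial\Omega$ by the maximum principle. The only cosmetic difference is that you also isolate the (nonnegative) contribution of $\rho h\varphi/\int_\Omega he^u$ on $\Omega^\ast\setminus\overline{\Omega}$ explicitly, which the paper absorbs implicitly.
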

\begin{proof}
Let $\varphi\in C^2_0(\overline{\Omega^\ast})$ and $\varphi\geq 0$
in $\Omega^\ast$. Then
\begin{eqnarray*}
    \int_{\Omega^\ast}(\Delta \varphi)\widehat{u}\,dx&=&\int_\Omega(\Delta\varphi) u \,dx\\
    &=&\int_\Omega\varphi(\Delta u) dx-\int_{\partial\Omega}\varphi(x)\frac{\partial u}{\partial
    \nu}(x)d\sigma,
\end{eqnarray*}
\noindent where $\nu(x)$ is the outer unit normal on $\partial\Omega$ at
$x\in \partial\Omega$. Since $u(x)>0$ in $\Omega$, then $\frac{\partial
u}{\partial \nu}(x)<0$ on $\partial\Omega$ by the strong maximum principle. Hence
\begin{eqnarray*}
    &&\int_{\Omega^\ast}(\Delta\varphi) \widehat{u}\, dx+\frac{\rho}{\int_{\Omega}he^u}h e^{\widehat{u}}\varphi dx\\
    &&\geq\int_\Omega\varphi\left(\Delta u+\rho\frac{h e^u}{\int_\Omega he^u}\right)dx
    -\int_{\partial\Omega}\varphi(x)\frac{\partial u}{\partial
    \nu}(x)d\sigma\geq0.
\end{eqnarray*}
\end{proof}

Putting $\beta=\log \rho -\log\int_\Omega h(x)e^{u(x)}$ and
\begin{eqnarray}\label{eqn2.5}
    v(x)&=&u(x)+\log h(x)+\log \rho -\log\int_\Omega h(x)e^{u(x)}\nonumber\\
        &=&u(x)+\log h(x)+\beta,\,x\in\Omega,
\end{eqnarray}
we see that, as a consequence of \eqref{eqn2.1} and \eqref{eqn2.2}, $v(x)$ satisfies
\beq\label{eqn2.6}
    \Delta v+e^{v}= 0\,\hbox{ in }\,\Omega,
\eeq
in the sense of distributions.
Let $\omega$ be a subdomain of $\Omega$ and set
$$
    m(\omega)=\int_\omega e^v
    dx,\,\,\,\,\,\ell(\partial\omega)=\int_{\partial\omega}e^{\frac{v}{2}} ds.
$$

\begin{theorem}\label{Thm2.1}
Suppose that $\Omega$ is an open, bounded and multiply-connected domain of class $C^1$ and $u$
is a solution of \eqref{eqn2.1} with $\rho\leq8\pi$. Then
$2\ell(\partial \omega)^2\geq m(\omega)(8\pi-m(\omega))$ for any
subdomain $\omega\Subset\Omega$. Furthermore, if $\omega$ is not simply-connected,
then the inequality is strict.
\end{theorem}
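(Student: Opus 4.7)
The plan is to derive Theorem \ref{Thm2.1} by reducing it to the classical Bol inequality (Theorem \ref{ThmB}) applied to the subsolution $\hat v$ provided by Lemma \ref{lem2.1}, invoked on the simply connected ``hull'' of $\o$ and on each of its holes, and then combining the resulting estimates by an elementary algebraic identity.

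\textbf{Setup and the simply connected case.} First I would set $v = u + \log h + \be$ with $\be = \log\rho - \log\int_\O h e^u$. Because $\log h$ is harmonic in $\O$, $v$ solves $\D v + e^v = 0$ there, while Lemma \ref{lem2.1} combined with the subharmonicity of $\log h$ in $\O^\ast$ yields that the extension $\hat v = \hat u + \log h + \be$ satisfies $\D \hat v + e^{\hat v} \geq 0$ in $\O^\ast$ in the distributional sense. A standard mollification then reduces us to smooth $\o$. When $\o \Subset \O$ is simply connected it cannot encircle any component of $\pa\O$, so it sits inside a simply connected open set $U$ with $\o \Subset U \Subset \O$. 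Since $v \in C^2(U)$ solves the equation and $\int_U e^v < \int_\O e^v = \rho \le 8\pi$, the conclusion follows directly from Theorem \ref{ThmB} applied on $U$.

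\textbf{The multiply connected case.} Assume $\o \Subset \O$ has holes $H_1,\dots,H_k$, the bounded components of $\R^2\setminus\o$; a topological check using $\overline{\o}\subset\O$ shows all $H_j$ lie in $\O^\ast$, and $\o^\ast := \o \cup \bigcup_j H_j$ is a simply connected subdomain of $\O^\ast$. I would apply Theorem \ref{ThmB} to $\hat v$ on $\o^\ast$ (embedded in a simply connected neighborhood in $\O^\ast$) to obtain $2\ell(\pa\o^\ast)^2 \ge m(\o^\ast)\bigl(8\pi - m(\o^\ast)\bigr)$, and then to each hole $H_j$ to obtain $2\ell(\pa H_j)^2 \ge m(H_j)\bigl(8\pi - m(H_j)\bigr)$. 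Since $\pa\o = \pa\o^\ast \sqcup \bigsqcup_j \pa H_j$, the trivial estimate $\bigl(A+\sum_j B_j\bigr)^2 \ge A^2 + \sum_j B_j^2$ for nonnegative numbers, together with $m(\o^\ast) = m(\o) + \sum_j m(H_j)$ and $\sum_j m(H_j)^2 \le \bigl(\sum_j m(H_j)\bigr)^2$, leads after a short calculation to
\[
2\ell(\pa\o)^2 \;\ge\; m(\o)\bigl(8\pi - m(\o)\bigr) + 2\biggl(\sum_j m(H_j)\biggr)\bigl(8\pi - m(\o^\ast)\bigr).
\]
Since $\o$ is multiply connected, $\sum_j m(H_j) > 0$, so provided $m(\o^\ast) < 8\pi$ the extra term is strictly positive and the strict form of the inequality follows.

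\textbf{Main obstacle.} The crucial step is verifying that $m(\o^\ast) \le 8\pi$, in fact strictly, so that Theorem \ref{ThmB} applies on $\o^\ast$ with $\hat v$ and the second term above is strictly positive. When every $H_j$ is contained in $\O$, then $\hat v = v$ on $\o^\ast$ and $\o^\ast \subsetneq \O$ (because $\overline{\o}\subset\O$), so $m(\o^\ast) < \int_\O e^v = \rho \le 8\pi$ is automatic. The genuinely new difficulty is when a hole of $\o$ encloses a hole of $\O$: then $\o^\ast$ reaches into $\O^\ast\setminus\O$, on which $\hat v$ equals $\log h + \be$ and contributes the extra mass $e^\be \int_{\o^\ast\setminus\O} h$. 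I expect this to be the heart of the argument: controlling this extra mass should use the precise normalization $e^\be = \rho / \int_\O h e^u$, the strict positivity of $u$ in $\O$ given by the maximum principle (yielding $\int_\O h e^u > \int_\O h$), and the subharmonicity of $\log h$ in $\O^\ast$ imposed by \eqref{eqn2.2}. The counterexample announced at the end of the section should confirm that this is precisely the place where the structural hypotheses \eqref{eqn2.1}--\eqref{eqn2.2} are indispensable, and that Theorem \ref{Thm2.1} genuinely fails for generic subsolutions on multiply connected domains.
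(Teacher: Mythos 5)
Your setup (the function $v$, the extension $\widehat v$ via Lemma \ref{lem2.1}, the mollification device of Lemma \ref{lem2.2}, and the reduction of the simply connected case to Theorem \ref{ThmB}) coincides with the paper's, and your algebraic recombination of Bol inequalities on the filled hull and on the holes is essentially the computation the paper performs in its final step (the case $k=1$ there). You have also correctly located the crux of the matter: whether $\widehat m$ of the filled-in domain stays below $8\pi$. But this is precisely the point your argument leaves open, and the resolution you sketch cannot work. On a hole $\Omega_0$ of $\Omega$ swallowed by a hole of $\omega$ one has $\widehat v=\log h+\beta$, hence $\widehat m(\Omega_0)=\rho\int_{\Omega_0}h\big/\!\int_\Omega he^{u}$; the positivity of $u$ only gives $\widehat m(\Omega_0)\le \rho\int_{\Omega_0}h\big/\!\int_\Omega h$, and nothing prevents this from being enormous (take $h\equiv 1$ and $\Omega$ a thin annulus around a large disk: then $\int_\Omega e^{u}$ is comparable to the small area $|\Omega|$ while $|\Omega_0|$ is of order one, so $\widehat m(\Omega_0)\gg 8\pi$). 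Thus $\widehat m(\omega^\ast)\le 8\pi$ is false in general, Theorem \ref{ThmB} and Lemma \ref{lem2.2} cannot be invoked on the filled domain, and your extra term $2\bigl(\sum_j m(H_j)\bigr)\bigl(8\pi-m(\omega^\ast)\bigr)$ may be negative.

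The missing idea is how the paper handles the supercritical regime. It splits into cases according to whether $\widehat m(\omega^\ast\cup\Omega_0)$ and $\widehat m(\omega\cup\omega^\ast\cup\Omega_0)$ are $\ge 8\pi$ or $<8\pi$, and in the supercritical cases it abandons Bol on that piece altogether. The substitute is Nehari's isoperimetric inequality $\bigl(\int_{\partial D}e^{g/2}\,ds\bigr)^2\ge 4\pi\int_D e^{g}\,dx$ for $g$ harmonic on a simply connected $D$, applied to the harmonic majorant of $\log h$ on the filled hole --- this is where the subharmonicity of $\log h$ on $\overline{\Omega^\ast}$ from \eqref{eqn2.2} really enters, not merely in making $\widehat v$ a subsolution. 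It yields the unconditional bounds $2\ell(\partial_0\omega)^2\ge 8\pi\,\widehat m(\Omega_0)$ and $\ell(\partial_i\omega)\ge\sqrt{4\pi\widehat m(\Omega_0)}$, which together with $\widehat m(\omega^\ast\cup\Omega_0)\ge 8\pi$ give $2\ell(\partial_0\omega)^2\ge 8\pi(8\pi-m(\omega^\ast))$, i.e.\ exactly the estimate Bol can no longer supply; moreover the intermediate case uses the cross term $2\ell(\partial_0\omega)\ell(\partial_1\omega)$, which your inequality $\bigl(A+\sum_j B_j\bigr)^2\ge A^2+\sum_j B_j^2$ discards. You identified the right hypotheses, but without the case distinction and the Nehari step the proof does not close.
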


We extend $v(x)$ on $\Omega^\ast$ by defining
$$
\widehat{v}(x)=\widehat{u}(x)+\log{h(x)} +\beta,\,x\in \Omega^\ast.
$$
As a consequence of Lemma \ref{lem2.1} and \eqref{eqn2.2}, $\widehat{v}$ satisfies
\beq\label{0612.1}
    \Delta \widehat{v}+e^{\widehat{v}}\geq0\,\hbox{ in }\Omega^\ast
\eeq
in the sense of distributions. Clearly Theorem \ref{ThmB} could be applied
on any simply-connected subdomain $\omega_0\Subset\Omega^\ast$ such that
$\int_{\omega_0}e^{\widehat{v}}dx\leq 8\pi$, whenever $\widehat{v}\in C^2(\Omega^\ast)$.
In our case however $\widehat{v}$ is just Lipschitz continuous in
$\Omega^\ast$, which is why we need the following:

\begin{lemma}\label{lem2.2}
Put
\begin{eqnarray*}
    \widehat{m}(\omega)=\int_\omega e^{\widehat{v}}
    dx,\,\,\,\,\, \widehat{\ell}(\partial\omega)=\int_{\partial\omega}e^{\frac{\widehat{v}}{2}} ds,
\end{eqnarray*}
and let $\omega\subseteq \omega_0\Subset \Omega^*$, where $\omega_0$ is
a simply-connected domain. If $\widehat{m}(\omega_0)\leq 8\pi$, then
\begin{equation}{\label{eqn2.7}}
2\widehat{\ell}(\p\o)^2 \geq \widehat{m}(\o) (8\pi-\widehat{m}(\o)).
\end{equation}
\end{lemma}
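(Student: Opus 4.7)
The plan is to reduce Lemma~\ref{lem2.2} to the classical Bol inequality (Theorem~\ref{ThmB}) by regularizing $\widehat{v}$ into a $C^2$ function that still satisfies a pointwise Liouville-type inequality, and then passing to the limit. The loss of smoothness of $\widehat{v}$ is confined to $\partial\Omega\cap\Omega^*$, where the strictly negative outer normal derivative of $u$ produces a positive measure contribution to $\Delta\widehat{v}$; on any relatively compact subset of $\Omega^*$ the function $\widehat{v}$ is globally Lipschitz, so standard mollification is available.

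First, I would fix a simply-connected $\omega_0'$ with $\omega_0\Subset\omega_0'\Subset\Omega^*$ and set $\widehat{v}_\varepsilon:=\widehat{v}*\eta_\varepsilon$ for a standard mollifier $\eta_\varepsilon$, which is of class $C^\infty$ on $\omega_0$ for every $\varepsilon<\mathrm{dist}(\omega_0,\partial\omega_0')$. Convolving the distributional inequality~\eqref{0612.1} gives the pointwise bound $\Delta\widehat{v}_\varepsilon+(e^{\widehat{v}})_\varepsilon\ge 0$ in $\omega_0$. This is not yet of Liouville form because Jensen's inequality only yields $(e^{\widehat{v}})_\varepsilon\ge e^{\widehat{v}_\varepsilon}$, which is the wrong direction. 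To reverse the comparison I would use the uniform modulus of continuity $\alpha_\varepsilon:=\sup\{|\widehat{v}(x)-\widehat{v}(y)|:x,y\in\overline{\omega_0'},\ |x-y|\le\varepsilon\}\to 0$: combining $\widehat{v}_\varepsilon(x)\ge\widehat{v}(x)-\alpha_\varepsilon$ with $e^{\widehat{v}(y)}\le e^{\widehat{v}(x)+\alpha_\varepsilon}$ for $|y-x|\le\varepsilon$ yields $(e^{\widehat{v}})_\varepsilon(x)\le e^{\widehat{v}_\varepsilon(x)+2\alpha_\varepsilon}$. Setting $\widetilde{v}_\varepsilon:=\widehat{v}_\varepsilon+2\alpha_\varepsilon$ then produces the genuine smooth Liouville-type inequality
$$
\Delta\widetilde{v}_\varepsilon+e^{\widetilde{v}_\varepsilon}\ge 0\quad\text{in }\omega_0.
$$

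Next I would apply Theorem~\ref{ThmB} on a slightly shrunk simply-connected domain. For $\delta>0$ let $\omega_\delta:=\{x\in\omega_0:\mathrm{dist}(x,\partial\omega_0)>\delta\}$; because $\widehat{v}$ is bounded, $e^{\widehat{v}}$ is bounded below by a positive constant on $\omega_0$, so $\widehat{m}(\omega_\delta)<\widehat{m}(\omega_0)\le 8\pi$ strictly for every $\delta>0$. Since $\int_{\omega_\delta}e^{\widetilde{v}_\varepsilon}=e^{2\alpha_\varepsilon}\int_{\omega_\delta}e^{\widehat{v}_\varepsilon}\to\widehat{m}(\omega_\delta)$ as $\varepsilon\to 0^+$, for all sufficiently small $\varepsilon$ we have $\int_{\omega_\delta}e^{\widetilde{v}_\varepsilon}\le 8\pi$ and Theorem~\ref{ThmB} applied on the simply-connected $\omega_\delta$ yields
$$
2\bigl(\ell_{\widetilde{v}_\varepsilon}(\partial\omega)\bigr)^2\ge m_{\widetilde{v}_\varepsilon}(\omega)\bigl(8\pi-m_{\widetilde{v}_\varepsilon}(\omega)\bigr),\qquad\omega\Subset\omega_\delta.
$$
Choosing $\delta$ small enough that $\omega\Subset\omega_\delta$ (always possible for $\omega\Subset\omega_0$) and letting $\varepsilon\to 0^+$, the uniform convergence $\widetilde{v}_\varepsilon\to\widehat{v}$ on $\overline{\omega}$ passes the inequality to $\widehat{v}$, thereby establishing~\eqref{eqn2.7}.

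The delicate point is precisely the production of the smooth Liouville inequality in the first step. Direct mollification is useless in this regard: Jensen's inequality goes the wrong way, and the natural multiplicative fix $f=\widehat{v}_\varepsilon-c$ with $c>0$, which would decrease $\int e^{f}$ and cure any mass bound problem, destroys the pointwise inequality $\Delta f+e^{f}\ge 0$. The remedy is the additive shift by $2\alpha_\varepsilon$, which repairs the pointwise inequality at the price of inflating $\int e^{\widehat{v}_\varepsilon}$ by the factor $e^{2\alpha_\varepsilon}>1$; absorbing this extra factor is exactly why the application of Theorem~\ref{ThmB} is carried out on the shrunk subdomain $\omega_\delta$, where the mass bound is strict.
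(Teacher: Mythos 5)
Your regularization is, up to bookkeeping, exactly the paper's: the authors mollify $\widehat{v}$, observe that $\psi_\varepsilon * e^{\widehat{v}} \leq (1+\delta(\varepsilon))\,e^{v_\varepsilon}$ on $\overline{\omega_0}$ for some $\delta(\varepsilon)\to 0$, apply Theorem~\ref{ThmB} to the shifted metric, and let $\varepsilon\to 0^+$. Their multiplicative factor $1+\delta(\varepsilon)$ is your additive shift $2\alpha_\varepsilon$ in disguise, since $(1+\delta)e^{v_\varepsilon}=e^{v_\varepsilon+\log(1+\delta)}$; the key idea --- repairing the wrong-way Jensen inequality by an upward shift that is harmless in the limit --- is identical, and your derivation of that shift via the modulus of continuity is a correct (indeed more explicit) justification of the paper's one-line claim.

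The one genuine defect is in how you secure the strict mass bound. You shrink the ambient domain to $\omega_\delta$ and then need $\omega\Subset\omega_\delta$, which you justify ``for $\omega\Subset\omega_0$''; but the lemma only assumes $\omega\subseteq\omega_0$, and in the proof of Theorem~\ref{Thm2.1} it is invoked precisely in the excluded cases: with $\omega=\omega_0=\omega^*\cup\Omega_0$ in Case 2, and in Case 3 with an $\omega$ whose boundary contains part of $\partial\omega_0$. For such $\omega$ no admissible $\delta$ exists and your argument proves a strictly weaker statement than the lemma requires. The paper goes the other way: it first reduces without loss of generality to $\widehat{m}(\omega_0)<8\pi$ \emph{strictly}, so that for small $\varepsilon$ the regularized mass over all of $\overline{\omega_0}$ --- hence over a slightly \emph{larger} simply-connected neighborhood --- remains below $8\pi$, and every $\omega\subseteq\omega_0$ is then relatively compact in the domain on which Theorem~\ref{ThmB} is applied. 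Your proof is repaired by enlarging rather than shrinking, but as written it has this gap. A minor further point: the inner parallel set $\omega_\delta$ of a simply-connected domain need not be connected (though each of its components is simply connected), so one should in any case pass to the component containing $\omega$.
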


\begin{proof}
Without loss of generality, we may assume $\widehat{m}(\o_0)<8\pi$. Let
$\psi_\ve(x)$ be a suitable mollifier, that is,
$C^{\infty}_0(\R^2)\ni\psi_\ve(x)\geq 0$, $\psi_\ve(x)=0$ for $|x|\geq \ve$ and  $\int\limits_{\R^2}\psi_\ve(x)=1$.
Set $v_\ve(x) = (\psi_\ve * \widehat{v})(x)$ to be the standard convolution. Thus, by using \eqref{0612.1}
we obtain
$$
\D v_\ve(x) + (\psi_\ve * e^{\widehat{v}})(x) \geq 0\ \ \mbox{ in }\ \ \O^\ast,
$$
in the sense of distributions. Since $v_\ve(x)$ uniformly converges to $\widehat{v}(x)$ as $\ve\to 0^+$, then there exists a
small constant $\d(\ve)>0$ such that
$$
(1+\delta(\ve)) e^{v_\ve(x)} \geq \psi_\ve(x) * e^{\widehat{v}(x)}\ \ \mbox{for}\ \ x\in \ov{\o_0}.
$$
Therefore $v_\ve(x)$ also satisfies
$$
\D v_\ve(x) + (1+\delta(\ve)) e^{v_\ve(x)} \geq 0 \ \ \mbox{in}\ \ \ov{\o_0}.
$$
For any $\ve$ small enough we have
$$
\int_{\ov{\o_0}} (1+\delta(\ve)) e^{v_\ve(x)} dx <8\pi.
$$
Let $m_\ve(\o)$ and $\ell_\ve(\p\o)$ be defined in the usual way in terms of the
metric $(1+\delta(\ve)) e^{v_\ve(x)}$ and $\sqrt{1+\delta(\ve)}
e^{\frac{v_\ve(x)}{2}}$ respectively. Then Theorem B implies
$$
2\ell_\ve^{\,2} (\p\o) \geq m_\ve (\o) (8\pi -m_\ve(\o)),
$$
and we obtain \eqref{eqn2.7} by passing to the limit as $\ve\to 0^+$.
\end{proof}

\textbf{The Proof of Theorem 2.1.}\\
Once the Bol's inequality has been established,
then the fact that it is strict for domains which are not simply-connected can be proved
by arguing exactly as in \cite{key02}. Since there is nothing new concerning this point
we refer the reader to that paper for further details.\\
Next, observe that if $\o$ is simply-connected then the conclusion easily follows from Theorem B.
Therefore we assume
without loss of generality that $\o$ is multiply-connected and
first consider the case where $\p\o$ does not bound neither one simply-connected subdomain of $\O$.
In this situation, each bounded component of $\R^2\backslash \p \o$
contains at least one bounded component of $\R^2\backslash \p\O$.\\
Let $\O_0$ be the
union of those bounded components of $\R^2\backslash \p\O$ which
are bounded by $\p\o$. Let $\o_0$ be the union of all bounded
simply-connected components of $\R^2\backslash \p \o$. Thus
$\O_0\subset \o_0$ and we define
$$
\o^*=\o_0\backslash \O_0\cup \o.
$$
Clearly $\o^*\subset \O$. See Figure \ref{fig03}.

\begin{figure}[hbt]
\centering \graphicspath{{pipic/}}
\includegraphics[trim=50 0 80 0,clip,width=3in,height=2.7in]{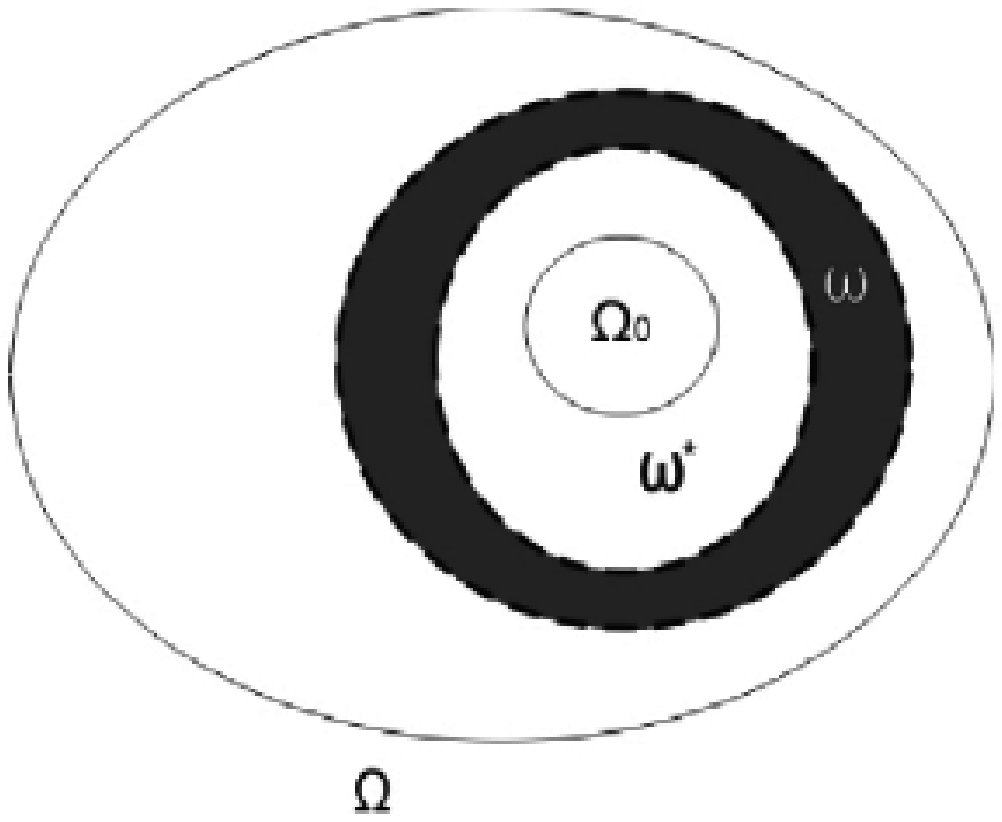}
\caption{}\label{fig03}
\end{figure}



In particular we see that both $\o^* \cup \O_0$ and
$\o^\ast\cup\o\cup\O_0$ are simply-connected domains. Let $\p_0\o$ be the boundary of $\o^* \cup \O_0$
and $\p_1\o=\p\o\setminus\p\o^*$. Then $\p_1\o=\p(\o^*\cup\o\cup\O_0)$ and we have
$$
\p \o = \p_1 \o \cup \p_0 \o.
$$

Next we discuss three cases separately.

\noindent {\bf Case 1.}\ \ \ $\widehat{m}(\o^* \cup \O_0) \geq 8\pi.$\\

Since $u(x)>0$ in $\O$, we have

\begin{eqnarray*}
2\ell(\p_0\o)^2 & = & 2\Biggl( \int_{\p_0\o} e^{\frac {u(x)}{2}+\frac{1}{2}\log h(x)} e^{\frac \beta 2} ds \Biggl)^2\\
& \geq & 2 \Biggl( e^{\frac \beta 2} \int_{\p_0\o}
e^{\frac{1}{2}\log h(x)}ds\Biggl)^2.
\end{eqnarray*}

\noindent
Since $\log h(x)$ is subharmonic in $\O^*$ and $\p_0\o=\p(\o^*\cup\O_0)$, then
\[
\log h(x)\leq g(x),\,x\in \o^*\cup\O_0,
\]
where $g(x)$ is the harmonic (in $\omega^*\cup\O_0$) function which also satisfies
\[
g(x)=\log h(x),\,x\in\p_0\o.
\]
Since $\omega^*\cup\O_0$ is simply-connected, by the Nehari's inequality \cite{Ne}, we have
\[
\left(\int_{\p_0\o}e^{\frac{1}{2}\log h(x)}ds\right)^2=
\left(\int_{\p_0\o}e^{\frac{g(x)}{2}}ds\right)^2\geq4\pi\int_{\o^*\cup\O_0}e^{g(x)}dx.
\]

Hence,
\begin{eqnarray}\label{eqn2.8}
    2\ell(\p_0\o)^2&\geq&8\pi\int_{\o^*\cup\O_0}e^{g(x)+\beta}dx
    \geq8\pi\int_{\o^*\cup\O_0}e^{\log h(x)+\beta} dx\\
    &>&8\pi\int_{\O_0}h(x)e^{\beta}dx=8\pi \widehat{m}(\O_0)\nonumber
\end{eqnarray}

Since $\widehat{m}(\o^*\cup\O_0)\geq 8\pi$, we have
\[
\widehat{m}(\O_0)\geq8\pi-\widehat{m}(\o^*)\equiv8\pi-m(\o^*),
\]
and then \eqref{eqn2.8} yields
\begin{equation}\label{eqn2.9}
2\ell^2(\p_0\o)\geq8\pi(8\pi-m(\o^*)).
\end{equation}

Since $m(\o\cup  \o^*\cup \O_0) > m(\o^* \cup \O_0)\geq 8\pi$,
the same argument with minor modifications can be used to obtain
\beq{\label{eqn2.10}}
2\ell^2(\p_1\o) \geq 8\pi (8\pi-m(\o\cup \o^*)).
\eeq
Hence, by using \eqref{eqn2.9} and (\ref{eqn2.10}), we conclude that
\begin{eqnarray*}
2\ell^2(\p\o) & = & 2(l(\p_1\o)+l(\p_0\o))^2 > 2 [l^2(\p_1\o)+l^2(\p_0\o)]\\
& \geq & m(\o^*) (8\pi-m(\o^*)) + m(\o\cup\o^*) (8\pi -m(\o\cup\o^*))\\
& = & (m(\o)+m(\o^*)) (8\pi - m(\o) -m(\o^*)) + m(\o^*) (8\pi-m(\o^*))\\
& = & m(\o) (8\pi-m(\o)) + m(\o^*) (16\pi -2m(\o) -2m(\o^*)).
\end{eqnarray*}

Since
$$
m(\o)+m(\o^*)\leq m(\O) \leq 8\pi,
$$
then
$$
2\ell^2(\p\o)
> m(\o) (8\pi-m(\o)),
$$ which proves Theorem \ref{Thm2.1} in Case 1.

\noindent {\bf Case 2.}\ \ \ $\widehat{m}(\o^*\cup \O_0)<8\pi$ and $\widehat{m}(\o\cup\o^*\cup \O_0)\geq 8\pi$.\\

Since $\widehat{m}(\o^*\cup \O_0)<8\pi$ and $\o^*\cup \O_0$ is simply-connected,
we can apply Lemma \ref{lem2.2} to obtain
\begin{eqnarray}{\label{eqn2.11}}
2\ell^2(\p_0\o) & \geq & \widehat{m}(\o^*\cup \O_0)(8\pi-\widehat{m}(\o^*\cup \O_0))\\
 & = & (m(\o^*)+\widehat{m}(\O_0)) (8\pi-m(\o^*)-\widehat{m}(\O_0)).\nonumber
\end{eqnarray}
As a consequence of (\ref{eqn2.8}) (which is easily seen to be satisfied in Case 2 as well) we conclude that
$\ell(\p_0\o)$ and $\ell(\p_1\o)$ satisfy
\beq{\label{eqn2.12}}
\ell(\p_0\o) \geq \sqrt{4\pi \widehat{m}(\O_0)} \ \ \mbox{and}\ \ \ell(\p_1\o) \geq \sqrt{4\pi \widehat{m}(\O_0)}.
\eeq

Clearly (\ref{eqn2.10}) holds in Case 2 as well and therefore it can
be used together with (\ref{eqn2.11}) and
(\ref{eqn2.12}) to conclude that
\begin{eqnarray*}
2\ell^2(\p\o) & = & 2[\ell^2 (\p_1\o)+2\ell(\p_1\o)\ell(\p_0\o)+\ell^2(\p_0\o)]\\
& \geq & 8\pi (8\pi -m(\o)-m(\o^*)) \\
& & + (m(\o^*)+\widehat{m}(\O_0)) (8\pi-m(\o^*)-\widehat{m}(\O_0))+16\pi \widehat{m}(\O_0)\\
& = & 8\pi (8\pi -m(\o)) - m^2(\o^*) + \widehat{m}(\O_0) (24\pi -2m(\o^*)-\widehat{m}(\O_0))\\
& = & m(\o) (8\pi-m(\o))+[(8\pi-m(\o))^2 - m^2(\o^*)]\\
& &  + \widehat{m} (\O_0) (24\pi-2m(\o^*)-\widehat{m}(\O_0)).
\end{eqnarray*}

\noindent Since $m(\o)+m(\o^*)\leq m(\O)\leq8\pi$, then
\[
m(\o^*)\leq8\pi-m(\o).
\]
Moreover since $\widehat{m}(\o^*\cup\O_0)<8\pi$, then
\[
2m(\o^*)+2\widehat{m}(\O_0)<16\pi.
\]

Hence $2\ell^2(\p\o)>m(\o)(8\pi-m(\o))$ which proves Theorem
\ref{Thm2.1} in Case 2 as well.

\noindent {\bf Case 3.}\ \ \ $\widehat{m}(\o\cup \o^* \cup \O_0)<8\pi$.

Since $\o\cup\o^*\cup\O_0$ is simply-connected, by applying Lemma
\ref{lem2.2} to $\o$, we have
\[
2\ell(\p\o)^2\geq m(\o)(8\pi-m(\o)),
\]
which concludes the proof of Theorem \ref{Thm2.1} in case
the interior of $\p\o$ does not contain neither one simply-connected subdomain of $\O$.

Now suppose that $\p\o$ bounds some simply-connected subdomains of $\O$ which
we denote by $\o_1,\ldots,\o_k$ with $k\geq 1$. Then $\o\cup\ov{\o_1}\cup\dots\ov{\o_k}$ is connected in
$\O$ and its boundary does not bound any simply-connected
component in $\O$. In particular
$$
\p\o = \p (\o\cup \o_1\cup\ldots\cup \o_k)\setminus \bigcup_{j=1}^k (\p\o_j).
$$
By applying the previous result to $\o\cup \o_1 \cup \ldots \cup \o_k$ and Lemma \ref{lem2.2} (or either
Theorem B) to the domains  $\o_j, 1\leq j\leq k$, we obtain
\begin{eqnarray}{\label{eq:2.13}}
2\ell^2(\p(\o\cup\o_1,\cup\ldots\cup \o_k)) & \geq & m(\o\cup
\o_1\cup \ldots \cup \o_k)\\ \nonumber & & [8\pi - m
(\o\cup\o_1,\ldots,\cup \o_k)],
\end{eqnarray}
\begin{equation}{\label{eq:2.14}}
2\ell^2 (\p \o_j) \geq m(\o_j) (8\pi - m(\o_j)).
\end{equation}

If $k=1$, by using \eqref{eq:2.13} and \eqref{eq:2.14} we have
\begin{eqnarray*}
2\ell(\p\o)^2 & > & 2\ell (\p(\o\cup\o_1))^2+2\ell(\p\o_1)^2\\
& \geq & m(\o\cup\o_1) (8\pi -m(\o\cup\o_1))+m(\o_1)(8\pi-m(\o_1))\\
& = & m(\o) (8\pi-m(\o_1))+m(\o_1) (16\pi-2m(\o_1)-2m(\o))\\
& \geq & m(\o) (8\pi-m(\o)).
\end{eqnarray*}
We omit the details of the case $k>1$ which is worked out by similar arguments.\hspace{\fill}$\square$

\bigskip
\bigskip

At this point we are ready to provide the following:

\noindent{\bf Proof of Theorem \ref{Thm1.3}}\\
Since $\mathrm{w}\in C^2(\Omega)\cap C(\,\ov{\Omega}\,)$ satisfies \eqref{eqn1.6} and
$\mathrm{w}=c$ on $\partial \Omega$ then, by arguing exactly as in Lemma \ref{lem2.1},
it is easy to check that setting $u=\mathrm{w}-c$ and
$\widehat{u}$ to be the corresponding (vanishing) extension to $\O^*$ as defined in \eqref{eqn2.3},
then it satisfies
$$
\Delta \widehat{u}+e^{c}e^{\widehat{u}}\geq 0 \,\hbox{ in }\,\O^*,
$$
in the sense of distributions. Hence $\widehat{v}=\widehat{u}+c$ is Lipschitz continuous in $\O^*$ and
satisfies \eqref{0612.1} in the sense of distributions.
The rest of the proof of Theorem \ref{Thm2.1} (including of course that of Lemma \ref{lem2.2})
works without any further modification and the desired conclusion follows.\hspace{\fill}$\square$

\bigskip

We conclude this section with an example which shows at the same time that if $\O$ is not simply-connected then
Theorem B fails and if $\log h$ cannot be extended to a subharmonic function in $\O^*$ (see \eqref{eqn2.2})
then Theorem \ref{Thm2.1} fails.\\

\noindent{\bf Example} [{\it Failure of the Bol's inequality on multiply-connected domains}]\\
For any $-1<\al<0$ and $a>0$ let us define
$$
v_\al(x)=\log{\left(\frac{8(1+\al)^2a^2|x|^{2\al}}{(1+a^2|x|^{2(1+\al)})^2}\right)},\;\;x\in\R^2.
$$
Observe that $v_\al$ satisfies $-\Delta v_a= e^{v_\al} -4\pi\al\delta_{p=0}$ in the sense of distributions in
$\R^2$ and in particular $-\Delta v_a= e^{v_\al}$ classically in $\R^2\setminus\{0\}$.
Since $v_\al\simeq 2\al\log|x|$ as $x\to 0$ it is clear that its leading term is superharmonic near the origin.
For each $0<s<t<+\infty$ let us set
$$
A_{s,t}=\{x\in\R^2\,|\,s<|x|<t\},
$$
and for $0<R_1<r_1<r_2<R_2<+\infty$ let us define
$$
\Omega:=A_{R_1,R_2},\qquad \omega=A_{r_1,r_2}.
$$
We have
$$
\ell(\omega)=\int\limits_{\{|x|=r_1\}\cup \{|x|=r_2\}}
\frac{\sqrt{8}(1+\al) a |x|^{\al}}{1+a^2|x|^{2(1+\al)}}\,ds=
$$
$$
\frac{2\pi\sqrt{8}(1+\al) a r_1^{\al+1}}{1+a^2r_1^{2(1+\al)}}+
\frac{2\pi\sqrt{8}(1+\al) a r_2^{\al+1}}{1+a^2r_2^{2(1+\al)}},
$$
and
$$
m(\omega)=\int\limits_{A_{r_1,r_2}}\frac{8(1+\al)^2a^2|x|^{2\al}}{(1+a^2|x|^{2(1+\al)})^2}\,dx=
$$
$$
8\pi(1+\al)\left(\frac{1}{1+a^2r_1^{2(1+\al)}}-\frac{1}{1+a^2r_2^{2(1+\al)}}\right).
$$

Clearly the last identity implies
$$
\int\limits_{\Omega} e^{v_\al}<\int\limits_{\R^2} e^{v_\al}=8\pi(1+\al)<8\pi.
$$

Therefore, letting $0<R_1<r_1\searrow 0$, we obtain
$$
2 \ell^2(\omega)= 64\pi^2(1+\al)^2 \frac{a^2r_2^{2(1+\al)}}{\left(1+a^2r_2^{2(1+\al)}\right)^2}+o(1),
$$
and
$$
m(\omega)(8\pi(1+\al)-m(\omega))=
$$
$$
64\pi^2(1+\al)^2
\left(\frac{a^2r_2^{2(1+\al)}}{\left(1+a^2r_2^{2(1+\al)}\right)}+o(1)\right)
\left(\frac{1}{\left(1+a^2r_2^{2(1+\al)}\right)}+o(1)\right).
$$

We readily conclude that as $0<R_1<r_1\searrow 0$ it holds
$$
2 \ell^2(\omega)=m(\omega)(8\pi(1+\al)-m(\omega))+o(1),
$$
and then, for any fixed $-1<\al<0$ and any $0<R_1<r_1$ small enough we see that
the inequality
$$
2 \ell^2(\omega)=m(\omega)(8\pi-m(\omega))+8\pi\al m(\omega)+o(1)\leq
$$
$$
m(\omega)(8\pi-m(\omega))+4\pi\al m(\omega)<m(\omega)(8\pi-m(\omega)),
$$
holds. As a consequence the Bol's inequality does not hold in the situation at hand.

\section{Symmetrization and uniqueness}\label{sec3}
\setcounter{equation}{0}

The main theorem in this section is the following.

\begin{theorem}\label{thm3.1}
Suppose that $\Omega\subset \R^2$ is a bounded domain of class $C^1$
and $h(x)$ satisfies \eqref{eqn2.2}. Then, for any $\rho\leq8\pi$,
there exists at most one solution for problem \eqref{eqn2.1}. In particular the first eigenvalue
of the linearized problem for \eqref{eqn2.1} is strictly positive for any $\rho\leq8\pi$.
\end{theorem}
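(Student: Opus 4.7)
The plan is to adapt the symmetrization strategy of Chang, Chen and the second author \cite{key02} from the simply-connected setting to our multiply-connected one, using Theorem \ref{Thm2.1} in place of the classical Bol's inequality at each step. Given a solution $u$ of \eqref{eqn2.1}, I would first pass to the conformal factor $v$ defined in \eqref{eqn2.5}, which satisfies $\Delta v + e^v = 0$ in $\Omega$ with $\int_\Omega e^v\,dx = \rho \leq 8\pi$, and $v = \log h + \beta$ on $\partial\Omega$. For almost every regular value $t$ of $v$, the superlevel set $\omega_t = \{v > t\}$ is a relatively compact, possibly multiply-connected, subdomain of $\Omega$ to which Theorem \ref{Thm2.1} applies, yielding
$$
2\ell(\partial\omega_t)^2 \geq m(\omega_t)\bigl(8\pi - m(\omega_t)\bigr),
$$
strict whenever $\omega_t$ is not simply-connected. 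Combined with $\int_{\{v=t\}}|\nabla v|\,ds = m(\omega_t)$ (from the divergence theorem) and Cauchy-Schwarz on the level curves, this produces the differential inequality $-m'(t) \geq (8\pi - m(t))/2$ for $m(t) := m(\omega_t)$, the basic comparison that drives the symmetrization against a radial Liouville bubble on a Euclidean disk.

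The next step is to translate this pointwise isoperimetric comparison into a spectral lower bound for the linearized operator
$$
L\phi = \Delta\phi + \rho\frac{h e^u}{\int_\Omega h e^u}\,\phi - \rho\frac{h e^u}{\bigl(\int_\Omega h e^u\bigr)^2}\int_\Omega h e^u\,\phi\,dx,
$$
under Dirichlet boundary conditions. By feeding the first eigenfunction into a Bol-metric Schwarz-type rearrangement and using the ODE comparison above, the eigenvalue problem reduces to a one-dimensional problem on the symmetrized disk whose first eigenvalue is explicitly nonnegative for $\rho \leq 8\pi$. Once the strict positivity $\mu_1(L) > 0$ is secured, uniqueness of \eqref{eqn2.1} follows by a now-standard continuation argument: the implicit function theorem makes the solution set locally a smooth curve in $\rho$, bifurcations are ruled out because $L$ is invertible, and the branch issuing from the unique trivial solution at $\rho = 0$ extends uniquely through the whole interval $[0, 8\pi]$.

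The main obstacle, and the \emph{subtle point} flagged right after the statement of Theorem \ref{Thm1.2}, is to upgrade $\mu_1(L) \geq 0$ to $\mu_1(L) > 0$ in the critical case $\rho = 8\pi$. Equality in the symmetrized ODE comparison would force every level set $\omega_t$ to be simply-connected and $v$ itself to coincide, up to a Möbius change of variables, with a radial Liouville bubble. This is exactly where the \emph{strict} version of Theorem \ref{Thm2.1} for multiply-connected $\omega_t$ becomes essential: in a genuinely multiply-connected $\Omega$ the level sets $\omega_t$ for $t$ slightly above $\inf_\Omega v$ are themselves multiply-connected, which forces strict inequality into the comparison at the critical parameter. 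Combined with the fact that any prospective limiting bubble cannot match the boundary datum $v|_{\partial\Omega} = \log h + \beta$ (nontrivial precisely because $\log h$ is only subharmonic, not constant, on $\overline{\Omega^{\ast}}$), this rules out the degenerate equality scenario and yields $\mu_1(L) > 0$ for every $\rho \leq 8\pi$, thereby completing both assertions of the theorem.
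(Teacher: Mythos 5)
Your overall strategy---symmetrization against the radial Liouville bubble with Theorem \ref{Thm2.1} in place of the classical Bol inequality, followed by a continuation argument once nondegeneracy is known---is the right one, but the proposal skips over the two places where the actual work lies. The first is the spectral step. The linearized operator is \emph{nonlocal}: after subtracting the constant $\int_\Omega he^u\varphi/\int_\Omega he^u$, the eigenfunction satisfies the local equation $\Delta\varphi+\rho he^u\varphi/\int_\Omega he^u=0$ together with the orthogonality constraint $\int_\Omega he^u\varphi=0$, but its boundary value becomes an unknown constant $c$ which in general is \emph{not} zero (see \eqref{eq:2-17}). Your reduction to ``a one-dimensional problem on the symmetrized disk whose first eigenvalue is explicitly nonnegative'' does not engage with this. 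A first-eigenvalue comparison on a single disk only reaches $\rho\le 4\pi$ (the first eigenvalue of $\Delta+e^U$ already vanishes on $B_{\sqrt 8}$, which carries mass $4\pi$); to reach $8\pi$ one must use the sign change forced by the orthogonality constraint, and when $c\neq 0$ this requires rearranging the two pieces $\{\varphi>c\}$ and $\{\varphi<c\}$ separately onto $B_{r_0}$ and $\R^2\setminus B_{R_0}$, matching them through a transmission condition on the radial derivatives, and then running an ODE comparison against the explicit kernel element $z(r)=(8-r^2)/(8+r^2)$ to pin down the zero $\xi_0=\sqrt 8$ and force the constrained Rayleigh quotient to equal $1$. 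None of this is supplied by the differential inequality $-m'(t)\ge(8\pi-m(t))/2$ you derive, which concerns the level sets of $v$ rather than those of the eigenfunction and by itself yields no spectral information.

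The second gap is the critical case $\rho=8\pi$, where your proposed mechanism is not the one that works. The superlevel sets of $v$ near $\inf_\Omega v$ need not be multiply-connected (nor even compactly contained in $\Omega$, since $v=\log h+\beta$ on $\partial\Omega$), and ``the limiting bubble cannot match the boundary datum'' is not an argument---note also that $\log h$ is \emph{harmonic} in $\Omega$ by \eqref{eqn2.2}, not merely subharmonic there. What actually happens at $\rho=8\pi$ is closer to the opposite of what you describe: equality throughout the chain of inequalities forces every superlevel and sublevel set of the eigenfunction to be \emph{simply}-connected (the strict form of Theorem \ref{Thm2.1} is used to exclude the multiply-connected alternative), and it forces equality in the Cauchy--Schwarz step, so that $|\nabla\varphi|e^{-v}$ is constant on each level set of $\varphi$. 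Since $\Omega$ is multiply-connected while $\{\varphi>c\}$ and $\{\varphi<c\}$ are simply-connected, the level set $\{\varphi=c\}$ must reach $\partial\Omega$ at some point where $\nabla\varphi$ vanishes; the constancy of $|\nabla\varphi|e^{-v}$ then propagates $\nabla\varphi=0$ along the whole level set, contradicting the strong maximum principle (Hopf lemma). This boundary-touching argument, not a mismatch of boundary data with a bubble, is what closes the critical case; as written, your proof of strict positivity at $\rho=8\pi$ does not go through.
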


As a matter of fact, the proof of Theorem \ref{thm3.1} can be worked out by using a rearrangement argument
together with the improved (see Theorem \ref{Thm2.1}) Bol's inequality.
As mentioned above, the first proof for $\rho<8\pi$ on simply-connected domains
was given by Suzuki in \cite{key20}. That argument
was improved in \cite{key02} to cover the case $\rho\leq 8\pi$ and later in \cite{BL1} where the more general
situation of singular (Dirac) data was considered as well. However that
rearrangement argument seems not to be well-known and in particular the argument used to handle
the more subtle case, that is $\rho=8\pi$, has to be modified with respect to the one
adopted in \cite{key02}, \cite{BL1}. Hence we will present a complete proof here
for the sake of completeness.

\begin{proof}
Since Theorem \ref{thm3.1} is well
known for $\O$ simply-connected, with the unique exception of the incoming
statement of Lemma \ref{lem3.1} we will assume that $\O$ is not
simply-connected for the rest of this section. In any case $\O$ will be always assumed to be \un{open,
bounded and of class $C^1$}.\\
For any such $\O$, and for fixed $V\in L^{\infty}(\O)$, we will say that $\lm_k=\lm_k(V,\O)$ is the
$k$-th eigenvalue of $\triangle+V$ if there exists $\psi_k\in H^{1}_0(\O)$ such that
$$
-\triangle \psi_k -V \psi_k=\lm_k V \psi_k \quad\mbox{in}\quad \O.
$$
We begin with the following Lemma of independent interest. The first part is well known see
\cite{key01}, \cite{key20} and more recently \cite{BL1}, while the second is the
novel generalization of those results to the case where $\O$ is multiply-connected and of class $C^1$.

\begin{lemma}\label{lem3.1}$\,$\\
$({\bf \mathrm{I}})$ Let $\mathrm{w}$ satisfy \eqref{eqn1.6} and $\O$ be simply-connected.
Then $\lm_1(e^\mathrm{w},\Omega)>0$ whenever $\int\limits_{\O}e^\mathrm{w}\leq 4\pi$ while
$\lm_2(e^\mathrm{w},\Omega)>0$ whenever $\int\limits_{\O}e^\mathrm{w}\leq 8\pi$.\\
\noindent
$({\bf \mathrm{II}})$ Let $v$ take the form \eqref{eqn2.5} and therefore satisfy \eqref{eqn2.6} on
the multiply-connected domain $\O$.\\
Then $\lm_1(e^v,\Omega)>0$ whenever $\rho\leq 4\pi$ while
$\lm_2(e^v,\Omega)>0$ whenever $\rho\leq 8\pi$.
\end{lemma}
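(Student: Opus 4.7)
Part (I) is the classical Bandle--Suzuki symmetrization estimate, which I would take from \cite{key01}, \cite{key20}, \cite{BL1} without any change. Part (II) requires running the same rearrangement scheme but with the classical Bol inequality systematically replaced by Theorem \ref{Thm2.1} of the present paper, the decisive gain being the strict inequality recorded there when the test subdomain is multiply-connected.

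For the $\rho\leq 4\pi$ statement I argue by contradiction: suppose $\lambda_1(e^v,\Omega)\leq 0$ and let $\psi>0$ be a corresponding eigenfunction, so that $-\Delta\psi\leq e^v\psi$ in $\Omega$ and $\psi=0$ on $\partial\Omega$. Setting $\Omega_t=\{\psi>t\}$ and $\mu(t)=\int_{\Omega_t}e^v\,dx$, the coarea formula and Cauchy--Schwarz combined with the divergence theorem on $\Omega_t$ give
$$\ell(\partial\Omega_t)^2\leq -\mu'(t)\int_{\Omega_t}e^v\psi\,dx=-\mu'(t)\Bigl(t\mu(t)+\int_t^{\max\psi}\mu(s)\,ds\Bigr).$$
Theorem \ref{Thm2.1} then yields the differential inequality
$$\mu(t)(8\pi-\mu(t))\leq -2\mu'(t)\Bigl(t\mu(t)+\int_t^{\max\psi}\mu(s)\,ds\Bigr),$$
which is saturated by the radial constant-curvature-one model on a Euclidean disk. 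A standard Sturm-type comparison with that model (as in \cite{key20}, \cite{key02}) forces the associated model first eigenvalue to be nonnegative when $\mu(0)\leq 4\pi$, and the borderline case $\lambda_1=0$ would require equality to hold in Theorem \ref{Thm2.1} at every level $t$. However, for $t$ close to $0$ the set $\Omega_t$ still surrounds every hole of $\Omega$ (since $\psi\to 0$ on each inner boundary component by the Dirichlet condition), hence is multiply-connected, so Theorem \ref{Thm2.1} is \emph{strict} at those levels; this rules out the marginal case and yields $\lambda_1>0$.

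For the $\lambda_2$ clause under $\rho\leq 8\pi$, if $\lambda_2(e^v,\Omega)\leq 0$ were possible then weighted orthogonality with the positive $\psi_1$ forces any associated eigenfunction $\psi_2$ to change sign, so the nodal domains $\Omega_\pm=\{\pm\psi_2>0\}$ are both nonempty and at least one of them---say $\Omega_+$---satisfies $\int_{\Omega_+}e^v\leq 4\pi$. On $\Omega_+$, $\psi_2^+$ is a positive first eigenfunction of the restricted problem with eigenvalue $\lambda_2\leq 0$, and its superlevel sets are subdomains of the multiply-connected ambient $\Omega$ to which Theorem \ref{Thm2.1} applies. The preceding argument then runs verbatim and yields $\lambda_2>0$, a contradiction.

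\textbf{Main obstacle.} The real difficulty is concentrated in the critical mass cases $\rho=4\pi$ and $\rho=8\pi$, where the standard symmetrization scheme of \cite{key20}, \cite{key02} only delivers $\lambda\geq 0$. The decisive new ingredient is the strict Bol inequality of Theorem \ref{Thm2.1} on multiply-connected subdomains, and the point requiring the most care is checking that the nested superlevel sets arising in the rearrangement really do detect the multiply-connected topology of $\Omega$ on a set of levels of positive measure---so that Theorem \ref{Thm2.1} is strictly satisfied there---and that this strictness propagates through the Sturm comparison to exclude the borderline equality regime of the classical argument.
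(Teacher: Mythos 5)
Your treatment of part (I) and of the $\lm_1$ clause of part (II) follows the paper's route: symmetrization against the radial model $U=-2\log(1+|x|^2/8)$, Theorem \ref{Thm2.1} in place of the classical Bol inequality, and exclusion of the borderline case $\rho=4\pi$ by observing that the superlevel sets $\{\psi_1>t\}$ of the \emph{positive} first eigenfunction exhaust $\O$ as $t\downarrow 0$ and are therefore multiply-connected for small $t$, so that Theorem \ref{Thm2.1} is strict there. That part is sound and is essentially the argument in the paper.

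The gap is in the $\lm_2$ clause at the critical value $\rho=8\pi$, i.e.\ exactly at the ``subtle point'' the paper singles out. You pick a nodal domain $\O_+$ with $\int_{\O_+}e^v\leq 4\pi$ and claim ``the preceding argument runs verbatim.'' The symmetrization does give $\int_{\O_+}e^v\geq 4\pi$, hence equality and saturation of every inequality in the chain \eqref{eqn3.8}. But your mechanism for excluding the equality regime was the multiple-connectivity of the superlevel sets, and that is exactly what fails here: the superlevel sets of $\psi_2^+$ inside a nodal domain need not surround any hole of $\O$, and in fact saturation of Theorem \ref{Thm2.1} \emph{forces} every set $\{\varphi>t\}$, $t>0$, and $\{\varphi<s\}$, $s<0$, to be simply-connected (a multiply-connected one would give strict inequality). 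So the strictness you rely on is provably unavailable, and your argument terminates without a contradiction when $\rho=8\pi$ and both nodal domains carry mass exactly $4\pi$. The paper closes this case by a different, topological device: since both nodal domains are simply-connected while $\O$ is not, the nodal line $\{\varphi=0\}$ must meet $\p\O$ at some $p_0$, where $\nabla\varphi(p_0)=0$; equality in the Cauchy--Schwarz step gives $|\nabla\varphi|e^{-v/2}$ constant on each nonzero level set, and passing to the limit this constancy extends to the nodal set, forcing $\nabla\varphi\equiv 0$ there and hence $\varphi\equiv 0$ by the strong maximum principle. Some version of this boundary/nodal-line argument (or an equivalent substitute) is indispensable; without it your proof covers only $\rho<8\pi$ for $\lm_2$.
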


\begin{proof}
As mentioned above $({\bf \mathrm{I}})$ is well known, see for example \cite{BL1} for a detailed proof.\\
We will first prove the assertion $({\bf \mathrm{II}})$ concerning $\lm_2\equiv\lm_2(e^v,\Omega)$.
We argue by contradiction and suppose that $\lm_2\leq0$. Then there exists
$K\leq 1$ and a second eigenfunction $\varphi$ satisfying
\beq\label{eqn3.1}
\left\{%
\begin{array}{ll}
    \Delta\varphi+Ke^v\varphi=0 &\, \hbox{ in } \;\Omega, \\
    \varphi=0, &\, \hbox{ on } \; \partial\Omega. \\
\end{array}%
\right.
\eeq

Let $\Omega^+=\{x\in\Omega|\varphi(x)>0\}$. We want to prove that

\begin{equation}\label{eqn3.2}
\int_{\Omega_{+}}e^{v(x)}dx\geq4\pi.
\end{equation}

 Set $U(x)=-2\log(1+\frac{1}{8}|x|^2)$, which is an entire solution of
\beq\label{eqn3.3}
  \Delta U + e^{U}=0 \ \hbox{ in } \;\mathbb{R}^2.
\eeq

\noindent For any $t>0$, set
$\Omega_t=\{x\in\Omega|\varphi(x)>t\}\Subset\Omega$, and let $r(t)$
be uniquely defined by the equality
\begin{equation}\label{eqn3.4}
\int_{B_{r(t)}}e^{U(x)}dx=\int_{\{\varphi>t\}}e^{v(x)}dx,
\end{equation}

\noindent where $B_{r(t)}$ is the open ball of center $O$ and
radius $r(t)$. Since $\varphi$ is smooth, $r(t)$ is a strictly
decreasing and continuous function of
$t\in[0,\max_{\Omega}\varphi]$. Let
\begin{equation}
\varphi^\ast(r) = \sup_{t>0}\lbrace t \vert  v <r(t)\rbrace.
\end{equation}
Thus, for $t \in [0,\max_{\begin{tiny}\Omega\end{tiny}}\varphi]$, the equalities
\begin{equation}
\varphi^\ast(r(t))\;=\;t \quad \mbox{and} \quad r(\varphi^\ast(r)) \;=\;
r
\end{equation}
hold. Clearly \eqref{eqn3.4} implies
\begin{equation}\label{eqn3.5}
\int_{\{\varphi^\ast>t\}} e^{U(x)}dx = \int_{\{\varphi>t\}} e^{v(x)}dx,
\end{equation}
and
\begin{equation}\label{eqn3.6}
\int_{B_{R_{0}}} e^{U(x)}(\varphi^\ast)^2 dx= \int_{\Omega_{+}}
e^{v(x)}\varphi^2dx ,
\end{equation}
where $R_0=r(0)$. To derive a contradiction, we use the coarea
formulas
\begin{eqnarray}\label{eqn3.7}
&&-\frac{d}{dt}\int_{\Omega_{t}} |\nabla\varphi |^{2} dx = \int_{\partial\Omega_t}|\nabla\varphi| ds, \ \mbox{and}\\
&&-\frac{d}{dt}\int_{\Omega_{t}} e^{v(x)}dx =
\int_{\partial\Omega_t} \frac{e^v}{|\nabla \varphi|}ds \nonumber
\end{eqnarray}
which hold simultaneously for almost any $t$. Since $\int_{\Omega} e^v dx=\rho\leq8\pi$,
then \eqref{eqn3.7}, the Cauchy-Schwarz inequality and Theorem \ref{Thm2.1} together imply
\begin{eqnarray}\label{eqn3.8}
&&\hspace{1cm}-\frac{d}{dt}\int_{\Omega_{t}} |\nabla\varphi|^2 dx = \int_{\{\varphi=t\}}|\nabla\varphi| ds \ \\
\nonumber
&& \geq  \left(\int_{\{\varphi=t\}} e^{v/2}ds\right)^2 \left(\int_{\{\varphi=t\}} \frac{e^v}{|\nabla\varphi|}ds\right)^{-1}\\
\nonumber
&&=\ell^2\left(\lbrace\varphi=t\rbrace\right)\left(-\frac{d}{dt}\int_{\Omega_{t}} e^{v(x)}dx\right)^{-1}\\
\nonumber
&&\geq \frac{1}{2}\left(8\pi-\int_{\Omega_{t}}e^{v(x)}dx\right)\left(\int_{\Omega_{t}}e^{v(x)}dx\right)
\left(- \frac{d}{dt}\int_{\Omega_{t}}e^{v(x)}dx\right)^{-1}\\
\nonumber
&&=\frac{1}{2}\left(8\pi-\int_{\{\varphi^\ast>t\}}e^{U(x)}dx\right)
\left(\int_{\{\varphi^\ast>t\}}e^{U(x)}dx\right)
\left(- \frac{d}{dt}\int_{\{\varphi^\ast>t\}}e^{U(x)}dx\right)^{-1}.
\end{eqnarray}

The same computation for $\bigtriangledown\varphi^\ast$ yields,

\begin{eqnarray}\label{eqn3.9}
&&\hspace{1cm}-\dfrac{d}{dt}\int_{\{\varphi^\ast>t\}}\vert\nabla\varphi^\ast\vert^2
dx \\&&=
\frac{1}{2}\left(8\pi-\int_{\{\varphi^\ast>t\}}e^{U(x)}dx\right)\left(\int_{\{\varphi^\ast>t\}}e^{U(x)}dx\right)
\left(- \dfrac{d}{dt}\int_{\{\varphi^\ast>t\}}e^{U(x)}dx\right)^{-1},\nonumber
\end{eqnarray}
for the same values of $t$, except possibly for a set of null measure.
Therefore,
\beq
-\dfrac{d}{dt}\int_{\{\varphi>t\}}\vert\nabla\varphi\vert^2 dx \geq
-\dfrac{d}{dt}\int_{\{\varphi^\ast>t\}}\vert\nabla\varphi^\ast\vert^2dx
\eeq
holds for almost any $t$. By integrating the above inequality, we obtain
\beq\label{eqn3.10}
\int_{B_{R_{0}}}\vert\nabla\varphi^\ast\vert^2 dx \leq
\int_{\Omega_{+}} \vert\nabla\varphi\vert^2 dx.
\eeq

On the other side, \eqref{eqn3.1} implies
$$
\int_{\Omega_{+}} \vert\nabla\varphi\vert^2 dx =
K\int_{\Omega_{+}}e^{v(x)}\ \varphi^2 dx,
$$
so that \eqref{eqn3.10} yields
$$
\int_{B_{R_{0}}} \vert\nabla\varphi^\ast\vert^2 dx -
\int_{B_{R_{0}}} e^{U(x)}(\varphi^\ast)^2 dx \leq
\int_{\Omega_{+}}\vert\nabla\varphi\vert^2 dx - \int_{\Omega_{+}}
e^{v(x)}\varphi^2 dx \leq 0.
$$
Thus, the first eigenvalue of $ \Delta + e^{U(x)} $ is
nonpositive. By a straightforward computation, the function
$z(r) = \frac{8 - r^2}{8 + r^2}$  satisfies
\beq\label{eqn 3.11}
\bigtriangleup z + e^{U(r)}z = 0 \quad \hbox{in} \quad
\mathbb{R}^2.
\eeq

Since $z(r)\geq0$ for $r \leq \sqrt{8}$, then the first eigenvalue of
$\bigtriangleup+e^{U(x)}$ for $B_{\sqrt{8}}$ is equal to zero. Hence\\
\beq\label{eqn3.12}
\sqrt{8} \leq R_{0},
\eeq\\
and \eqref{eqn3.2} readily follows since we have (see \eqref{eqn3.5})
$$
\int_{\Omega_{+}} e^{v(x)}dx = \int_{B_{R_{0}}} e^{U(x)}dx \geq
\int_{B_{\sqrt{8}}} e^{U(x)}dx =4\pi.
$$

Next, let $\Omega_{-}=\lbrace{x\vert \varphi(x)<0}\rbrace$. By using the same argument
we obtain
$$
\int_{\Omega_-} e^{v(x)}dx \geq 4\pi,
$$
and
$$
\rho=\int_{\Omega} e^{v(x)}dx = \int_{\Omega_{+}} e^{v(x)}dx +
\int_{\Omega{-}}e^{v(x)}dx \geq 8\pi.
$$

Of course, this is already a contradiction whenever $\rho<8\pi$.\\
In case $\rho=8\pi$ then (\ref{eqn3.2}) turns out to be an
equality i.e.
$$
\int_{\Omega_{+}} e^{v(x)}dx = 4\pi = \int_{\Omega_{-}} e^{v(x)}dx,
$$
\noindent and all the inequalities in (\ref{eqn3.8}) are
equalities. In particular, for $t>0$ and $s<0$  the domains
$\lbrace{x \in \Omega\vert\varphi(x)>t}\rbrace$ and $\lbrace{x \in
\Omega\vert\varphi(x)<s}\rbrace$ are simply-connected (by Theorem \ref{Thm2.1})
and in particular $\vert\bigtriangledown\varphi(x)\vert
e^{-v(x)}=\vert\bigtriangledown\varphi(y)\vert e^{-v(y)}$ whenever
$\varphi(x)=\varphi(y)\neq0$, because of the equality in the Cauchy-Schwarz inequality.\\
Therefore, passing to the limit, the equality above holds also for the set
$\{x\,|\,\varphi(x)=0\}$, i.e.,
\begin{equation}{\label{eqn3.13}}
|\nabla\varphi(x)|e^{-v(x)}=|\nabla\varphi(y)|e^{-v(y)}\,\,\hbox{
whenever }\,\,\varphi(x)=\varphi(y)=0.
\end{equation}
Since both $\Omega^+$ and $\Omega^-$ are simply-connected while
$\Omega$ is not, then the nodal line
$\{x\in\O\,|\,\varphi(x)=0\}$ must intersect $\partial\Omega$ at some point (say) $p_0\in\partial\O$. Clearly,
$\nabla\varphi(p_0)=0$, and (\ref{eqn3.13}) yields
\[
\nabla\varphi(x)=0\,\,\hbox{for}\,\,x\in\{x\in\overline{\O}\,|\,\varphi(x)=0\},
\]
which in view of the strong maximum principle implies $\varphi\equiv 0$. This is the desired
contradiction for $\rho=8\pi$ which concludes the proof of that part of $({\bf \mathrm{II}})$
which is concerned with $\lm_2\equiv\lm_2(e^v,\Omega)$.\\
At this point however the assertion concerning $\lm_1$ for $\rho<4\pi$ is easily worked out by arguing as we did
above via rearrangement and just replacing $\O^+$ with $\O$. Finally, in case $\rho=4\pi$, we conclude once more
that all the inequalities in \eqref{eqn3.8} are equalities, which is impossible in view of Theorem
\ref{Thm2.1} and the fact that $\O$ is multiply-connected.
\end{proof}

It turns out that the same argument used in the proof of $({\bf \mathrm{I}})$ in \cite{BL1}
as well as the one used in the proof of $({\bf \mathrm{II}})$ above
show that the following useful result holds:
\begin{lemma}\label{lem3.1.1}$\,$\\
Let either $\mathrm{w}$ satisfy \eqref{eqn1.6} and $\O$ be simply connected or
$v$ take the form \eqref{eqn2.5} and satisfy \eqref{eqn2.6} on
the multiply-connected domain $\O$ and set either $V=e^\mathrm{w}$ or $V=e^{v}$ respectively.\\
Assume moreover that $\int\limits_{\O}V\leq 8\pi$ and that on some subdomain
$\o\Subset \O$ there exists $\psi\in C^{2}_0(\o)\cap C^{0}(\ov{\o})$ which satisfies
$$
-\triangle \psi - V\psi\leq 0\quad\mbox{in}\quad \o.
$$
If $\psi>0$ in $\o$, then $\int\limits_{\o}V\geq 4\pi$.
\end{lemma}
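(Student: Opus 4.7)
My plan is to adapt the symmetrization argument used in the proof of Lemma \ref{lem3.1}$\,({\bf \mathrm{II}})$, now applied to the positive function $\psi$ on the subdomain $\o$ rather than to an eigenfunction on the positive nodal component $\O^+$. The key observation to kick things off is that testing $-\D\psi - V\psi \leq 0$ against $\psi$ itself and integrating by parts (using $\psi = 0$ on $\p\o$) gives the Rayleigh-type inequality
\[
\int_\o |\nabla \psi|^2\, dx \;\leq\; \int_\o V \psi^2\, dx.
\]

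Next I would set $\O_t = \{x \in \o : \psi(x) > t\}$ for $t \in [0, \max_\o \psi]$ and introduce the radial symmetrization $\psi^\ast$ on a ball $B_{R_0}$ as in (\ref{eqn3.4})--(\ref{eqn3.5}), using the entire solution $U(x) = -2\log(1+|x|^2/8)$ of (\ref{eqn3.3}) and the equimeasurability prescription $\int_{B_{r(t)}} e^U = \int_{\O_t} V$. Since $\int_\O V \leq 8\pi$, each super-level set $\O_t \Subset \o \Subset \O$ has mass at most $8\pi$, so Bol's inequality applies in its appropriate form (Theorem B if $\O$ is simply connected, Theorem \ref{Thm2.1} otherwise). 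Running the coarea formula, the Cauchy--Schwarz inequality, and Bol's inequality exactly as in (\ref{eqn3.7})--(\ref{eqn3.9}) yields
\[
-\frac{d}{dt}\int_{\O_t} |\nabla\psi|^2\, dx \;\geq\; -\frac{d}{dt}\int_{\{\psi^\ast > t\}} |\nabla\psi^\ast|^2\, dx
\]
for almost every $t$, and integrating in $t$ gives $\int_{B_{R_0}} |\nabla\psi^\ast|^2 \leq \int_\o |\nabla \psi|^2$ with $R_0 = r(0)$.

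Combining these inequalities with the equimeasurability identity $\int_{B_{R_0}} e^U (\psi^\ast)^2 = \int_\o V \psi^2$ (analogous to (\ref{eqn3.6})), I would deduce
\[
\int_{B_{R_0}} \bigl(|\nabla \psi^\ast|^2 - e^U (\psi^\ast)^2\bigr)\, dx \;\leq\; 0,
\]
so the first Dirichlet eigenvalue of $\D + e^U$ on $B_{R_0}$ is non-positive. Since $z(r) = (8-r^2)/(8+r^2)$ satisfies (\ref{eqn 3.11}) and is strictly positive on $B_{\sqrt{8}}$, the first Dirichlet eigenvalue of $\D + e^U$ on $B_{\sqrt{8}}$ vanishes; monotonicity of the principal eigenvalue with respect to domain inclusion forces $R_0 \geq \sqrt{8}$, whence
\[
\int_\o V\, dx \;=\; \int_{B_{R_0}} e^U\, dx \;\geq\; \int_{B_{\sqrt{8}}} e^U\, dx \;=\; 4\pi.
\]

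The main technical point requiring attention is the admissibility of each super-level set $\O_t$ in Bol's inequality; this is automatic since $\psi = 0$ on $\p\o$ forces $\O_t \Subset \o \Subset \O$ for every $t > 0$, so that the total mass hypothesis $\int_\O V \leq 8\pi$ is precisely the one required by Theorem \ref{Thm2.1} (or Theorem B). A secondary point is that the argument uses only the non-strict form of Bol's inequality, so it applies uniformly to the simply connected and multiply connected cases without further modification; unlike in the proof of Lemma \ref{lem3.1} no additional argument is needed at the borderline $\int_\O V = 8\pi$, since here we are after the \emph{non-strict} bound $\int_\o V \geq 4\pi$.
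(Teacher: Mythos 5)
Your proposal is correct and follows essentially the same route the paper intends: the authors state that Lemma \ref{lem3.1.1} follows from "the same argument used in the proof of $({\bf \mathrm{II}})$ above", i.e.\ the symmetrization chain coarea formula + Cauchy--Schwarz + Bol's inequality applied to the super-level sets of $\psi$, culminating in the comparison with $z(r)=(8-r^2)/(8+r^2)$ on $B_{\sqrt{8}}$ and the bound $R_0\geq\sqrt{8}$, exactly as you write it out. Your opening Rayleigh-type inequality $\int_\o|\nabla\psi|^2\leq\int_\o V\psi^2$ correctly replaces the eigenvalue identity used for $\varphi$ in Lemma \ref{lem3.1}, and your closing remark that only the non-strict form of Bol's inequality is needed here is accurate.
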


Next, let us prove Theorem \ref{thm3.1}.

\textbf{The Proof of Theorem \ref{thm3.1}.}\\ The main point in the proof of Theorem
\ref{thm3.1} is to show that the linearized operator for (\ref{eqn2.1}) is non-singular whenever
$\rho\leq 8\pi$. Once this fact is known, the proof can be completed by known arguments,
see for example \cite{BL1}. We refer the reader to that paper for further details concerning this point.\\

We argue by contradiction and suppose that $\varphi$ is a solution of the linearized problem for \eqref{eqn2.1}
with $\rho\leq 8\pi$.
Then $\varphi$ satisfies
\begin{eqnarray*}
& & \D \varphi + \frac{\rho he^u\varphi}{\int_\O he^u } -
\frac{\rho he^u(\int_\O he^u \varphi)}
{(\int_\O he^u )^2}=0\ \ \mbox{in}\ \ \O,\\
& & \varphi\mid_{\p\O}=0.
\end{eqnarray*}
By adding $\frac{-\int he^u\varphi}{\int_\O he^u }$ to
$\varphi$ we come up with a new function (still denoted by $\varphi$) which satisfies
\begin{equation}{\label{eq:2-17}}
\left\{ \begin{array}{l}
\ds \D\varphi + \frac{\rho he^u \varphi}{\int_\O he^u } =0\ \ \mbox{in}\ \ \O,\\\\

\int_\Omega |\nabla\varphi|^2 =1\\\\

\ds \int_\O \rho he^u \varphi=0 \ \ \mbox{and $\varphi=c$ on
$\p\O$}, \end{array}\right.
\end{equation}
where $c$ is a constant. Since $\int\rho he^u\varphi=0$, $\varphi$
changes sign in $\O$. If $c=0$, then the second
eigenvalue of $\Delta+\frac{\rho he^u}{\int\rho he^u}$ would be
non-positive, in contradiction with Lemma \ref{lem3.1}$ ({\bf \mathrm{II}})$. Hence, we may assume that $c\neq 0$ and
in particular, without loss of generality, that in fact $c<0$.
We define
$$
\widetilde{\O}^+=\{x\in\O\,|\,\varphi(x)>c\},
$$
and divide the proof in two cases.

\noindent\textbf{Case 1:} $\widetilde{\O}^+=\Omega$.

We argue as in the proof of Lemma \ref{lem3.1} and apply the rearrangement
argument to obtain a contradiction.\\
Set $\O_t=\{x\in\O\,|\,\varphi(x)>t\}$ for
$t\in[c,\,\displaystyle\max_{\overline{\O}}\varphi]$ and $U(x)$ and
$\varphi^*$ as in Lemma \ref{lem3.1}. Thus, we have
\begin{eqnarray}\label{eqn3.14}
  &&\int_{\{\varphi^*>t\}}e^{U(x)}dx=\int_{\{\varphi>t\}}e^{v(x)}dx,\\
{\label{eqn3.15}}
&&\int_{B_{R_0}}e^{U(x)}\varphi^*dx=\int_\O e^{v(x)}\varphi dx=0,\\
\label{eqn3.16} &&\int_{B_{R_0}}|\nabla\varphi^*|^2<\int_\O
|\nabla\varphi|^2dx,
\end{eqnarray}
where $R_0=r(c)$ satisfies
\beq\label{eqn080612:1}
\int_{B_{R_0}}e^{U(x)}dx=\rho\leq8\pi,
\eeq
and $R_0<+\infty$ if $\rho<8\pi$, while $R_0=+\infty$ if
$\rho=8\pi$.\\
It is worth to point out that the strict inequality in \eqref{eqn3.16} is
due to the fact that $\O$ is not simply-connected (see Theorem \ref{Thm2.1}).\\
Since $c<0$ then is well defined $\xi_0=r(0)$ and clearly $\varphi^*(\xi_0)=0$. It is easy to verify
that we may assume \eqref{eqn3.2} (that is $\int_{\Omega_{+}}e^{v(x)}dx\geq4\pi$) to be satisfied in the situation
under consideration. Then we conclude that
$$
\int_{B_{\xi_0}}e^{U(x)}dx=\int_{\{\varphi^*>0\}}e^{U(x)}dx=\int_{\{\varphi>0\}}e^{v(x)}dx\geq4\pi.
$$
Hence,
\beq\label{080612:2}
\xi_0\geq\sqrt{8}.
\eeq
Next, we observe that
\beq\label{0506.3}
\int_\O |\nabla\varphi|^2dx=\int_\O e^v\varphi^2 dx=1.
\eeq
Therefore, putting
\begin{eqnarray*}
& & k_0=\inf \Biggl\{ \int_{B_{R_0}} |\bd \psi|^2dx \ \ | \ \
\psi(x)=\psi(|x|)\ \ \mbox{and}\ \
\psi(\xi_0)=0,\\
& & \quad \int_{B_{R_0}} e^{U(x)}\psi(x)dx=0,\ \ \
\int_{B_{R_0}} e^{U(x)} \psi^2 dx=1 \Biggl\},
\end{eqnarray*}
we see that  \eqref{0506.3} and \eqref{eqn3.6}, \eqref{eqn3.15}, \eqref{eqn3.16} together imply $k_0<1$.

It is not difficult to see that the infimum is always achieved by
some function $\psi^*$ where $\psi^*$ is continuous in
$\ov{B_{R_0}}$ and  satisfies
\begin{equation}{\label{eq:2-34}}
\left\{ \begin{array}{l} \D\psi^*+k_0 e^{U(x)}\psi^*=0\ \ \mbox{in}
\ \ 0< r<\xi_0\ \hbox{ and }\ \xi_0<r<R_0\\ \ds \int_{B_{R_0}}
e^{U(x)}\psi^*(x)dx=0,\ \psi^*(\xi_0)=0 \ \ \mbox{and}\ \
{\psi^*}'(R_0)=0.
\end{array}\right.
\end{equation}

If $R_0<+\infty$, then
\begin{eqnarray*}
& & \lim_{r\uparrow \xi_0} {\psi^*}'(r)r = -k_0\int_0^{\xi_0} e^{U(r)} \psi^*(r)rdr,\ \ \mbox{and}\\
& & \lim_{r\downarrow \xi_0} {\psi^*}'(r)r = k_0 \int_{\xi_0}^{R_0} e^{U(r)} \psi^*(r)rdr,
\end{eqnarray*}
the last equality being a consequence of the condition ${\psi^*}'(R_0)=0$. Since $\psi^*$ satisfies
$$
\int_{B_{R_0}} e^{U(r)} \psi^*(r) dx=0,
$$
we conclude that
\begin{equation}{\label{eq:2-35}}
\lim_{r\uparrow \xi_0} {\psi^*}'(r) = \lim_{r\downarrow \xi_0}
{\psi^*}'(r).
\end{equation}
If $R_0=+\infty$ and since ${\psi^*}'\in L^2(\R^2)$, there exists $r_n\to +\infty$ such that
$$
{\psi^*}'(r_n)r_n \to 0,
$$
which implies
$$
\lim_{r\downarrow \xi_0} {\psi^*}'(r)r = \int_{\xi_0}^\infty e^{U(x)} \psi^* (r)rdr.
$$
Therefore we readily verify that (\ref{eq:2-35}) holds in case $R_0=+\infty$ as well.
Hence $\psi^*$ is of class $C^2$ and satisfies
\begin{equation}\label{eq:2-35a}
\left\{ \begin{array}{l}
\D \psi^* + k_0 e^{U(x)} \psi^*=0\ \ \mbox{in $B_{R_0}$, and}\\
\ds \int_{B_{R_0}} e^{U(x)} \psi^* dx = 0 \ \ \mbox{and}\ \
\psi^*(\xi_0)=0,{\psi^*}^{'} (R_0)=0
\end{array}\right.
\end{equation}
for some $0<k_0<1$.

Clearly $\psi^*(r)$ changes signs once and \eqref{eqn080612:1} to be used together with
Lemma \ref{lem3.1.1} shows that in fact it changes sign just once.
Therefore we can assume without loss of generality that $\psi^*(r)>0$ if $r<\xi_0$ and $\psi^*(r)<0$ if
$r>\xi_0$. Hence, by using the integral constraint in \eqref{eq:2-35a},
we see that for each $0<r<R_0$ it holds
$$
\int_0^r e^{U(r)} \psi^*(r)rdr>\int_0^{R_0} e^{U(r)}\psi^*(r) rdr=0.
$$

Thus $\psi^{*'}(r)<0$ for $0<r<R_0$.

Let $z(r)=\frac{8-r^2}{8+r^2}$ be the function defined above and satisfying
\eqref{eqn 3.11}. Note that $z(\sqrt{8})=0$ and in particular that $\psi(\xi_0)=0$ and $\xi_0\geq
\sqrt{8}$ (see \eqref{080612:2}). We want to prove $\xi_0=\sqrt{8}$.

We assume (by contradiction) that $\xi_0>\sqrt{8}$. Then by using the equation in
\eqref{eq:2-35a} and \eqref{eqn 3.11} we see that for each $\xi_0<r<R_0$ it holds

\begin{eqnarray}{\label{eq:2-37}}
& & \lim_{R\to R_0} \Biggl( \frac{\psi^*(R)}{z(R)}\Biggl)' z^2(R)
- r\Biggl( \frac{\psi^*(r)}{z(r)}\Biggl)' z^2(r)\\ \nonumber &  &
=  (1-k) \int_r^{R_0} e^{U(s)} \psi^*(s) z(s) ds.
\end{eqnarray}

In the same time we see that either $R_0<+\infty$ and then
\beq\label{eq:2-38}
R_0 \Biggl( \frac{\psi^*(R_0)}{z(R_0)}\Biggl)' z^2(R_0)
=  R_0 (\psi^{*'} (R_0) z(R_0) - z'(R_0)\psi^*(R_0))
\eeq
$$
=  -R_0 z'(R_0) \psi^*(R_0)<0,
$$
or $R_0=+\infty$ and then
\begin{equation}{\label{eq:2-39}}
\lim_{R\to +\infty} R \Biggl( \frac{\psi^*(R)}{z(R)}\Biggl)' z^2
(R) = 0.
\end{equation}
Hence we can use \eqref{eq:2-37} together with \eqref{eq:2-38} and  \eqref{eq:2-39} to conclude that
$$
\Biggl( \frac{\psi^*(r)}{z(r)}\Biggl)'<0,\ \ \mbox{if}\ \xi_0<r<R_0.
$$
This inequality in turn yields
$$
0=\frac{\psi^*(\xi_0)}{z(\xi_0)} > \frac{\psi^*(r)}{z(r)}>0,\ \ \mbox{if}\ \sqrt{8}<\xi_0\leq r
$$
a contradiction. Therefore we conclude that $\xi_0\leq\sqrt{8}$ and in view of
\eqref{080612:2} $\xi_0=\sqrt{8}$ as desired.\\
At this point, by using \eqref{eq:2-35a}, we check that $\psi^*$ is a positive eigenfunction
for $\D + e^{U(r)}$ on the ball $B_{\sqrt{8}}$ corresponding to the eigenvalue $k_0-1$. Hence
$\psi^*$ must be the first eigenfunction corresponding to the first eigenvalue which, of course, is zero
(its eigenfunction being $z$).
Hence we must have $k_0=1$ which is the desired contradiction in Case 1.\\

\noindent {\bf Case 2.}\ \ \ We assume both $\widetilde{\O}^+=\{
x\in\O \mid \varphi(x)>c\}$ and $\widetilde{\O}^-=\{ x\in\O\mid
\varphi(x)<c\}$ are not empty sets.

In this case, we set $r(t)$ for $t>c$ and $R(t)$ for $t<c$ by
$$\int_{B_{r(t)}} e^{U(x)} dx = \int_{\{\varphi>t\}} e^{v(x)} dx \ \ \mbox{if}\ \ t>c,$$
and,
$$ \int_{\R^2\backslash B_{R(t)}} e^{U(x)} dx = \int_{\{\varphi<t\}} e^{v(x)} dx
\ \ \mbox{if}\ \ t<c.
$$
Let $r_0=\lim_{t\downarrow c} r(t)$ and $R_0=\lim_{t\uparrow c}
R(t)$. Of course we have
\beq\label{090612:1}
r_0\leq R_0\ \  \mbox{and} \ \ r_0=R_0 \ \ \mbox{if and only if} \ \
\rho=8\pi.
\eeq
Let $\varphi^*$ and $\tilde \varphi$ be the
symmetrization of $\varphi$ for the parts $\{\varphi>c\}$ and
$\{\varphi<c\}$ respectively. Thus, by arguing as in Lemma \ref{lem3.1}, we have
\begin{eqnarray*}
& & \int_{B_{r_0}} |\bd\varphi^*(x)|^2 dx + \int_{B_{r_0}} e^U |\varphi^*(x)|^2 dx \\
& \leq& \int_{\{ \varphi>c\}} |\bd\varphi(x)|^2 + \int_{\{\varphi>c\}} e^v |\varphi(x)|^2 dx,\\
\mbox{and} & & \\
& & \int_{\R^2\backslash B_{R_0}} |\bd\tilde \varphi(x)|^2 +
\int_{\R^2\backslash B_{R_0}}
e^U |\tilde \varphi(x)|^2 dx \\
& \leq & \int_{\{\varphi<c\}} |\bd\varphi(x)|^2dx +
\int_{\{\varphi<c\}} e^v |\varphi(x)|^2 dx.
\end{eqnarray*}
Therefore, by using \eqref{eq:2-17}, we obtain
\begin{eqnarray}{\label{eq:2-40}}
& & \int_{B_{r_0}} |\bd\varphi^*(x)|^2 + \int_{B_{r_0}} e^U
|\varphi^*(x)|^2 + \int_{\R^2\backslash B_{R_0}} |\bd \tilde
\varphi(x)|^2 \\ \nonumber & & + \int_{\R^2\backslash B_{R_0}} e^U
|\varphi^*(x)|^2 \leq 0.
\end{eqnarray}
Clearly, $\varphi^*$ and $\tilde \varphi$ together satisfies
$$\int_{B_{r_0}} e^U \varphi^*(x) + \int_{\R^2\backslash B_{R_0}} e^U \tilde \varphi(x) dx
= \int_\O e^v \varphi(x) dx =0.$$ Let $\xi_0=r(0)<r_0=r(c)$. Then
$\varphi^*(\xi_0)=0$ and we may still assume without loss of generality \eqref{eqn3.2}
(that is $\int_{\Omega_{+}}e^{v(x)}dx\geq4\pi$) to be satisfied so that also
\eqref{080612:2} (that is $\xi_0\geq\sqrt{8}$) holds in Case 2 as well.
\begin{eqnarray*}
\mbox{Set}
& & H=\Biggl\{(\psi^*,\tilde\psi)\mid \psi^*(x)=\psi^*(|x|)\ \ \mbox{and}\ \ \tilde\psi(x)=\tilde\psi(|x|)\\
& & \hbox{are defined in}\ \ B_{r_0}\ \ \hbox{and}\ \ \mathbb{R}^2\backslash B_{R_0}\ \ \hbox{respectively, and satisfy}\\
& & \psi^*(\xi_0)=0, \psi^*(r_0)=\tilde\psi(R_0)=c\ \ \hbox{and}\\
& & \int_{B_{r_0}} e^U \psi^*(x) dx + \int_{\R^2\backslash
B_{R_0}} e^U \tilde\psi(x)dx=0\Biggl\},
\end{eqnarray*}

and
$$k=\inf \left\{\int_{B_{r_0}} |\bd\psi^*|^2 + \int_{\R^2\backslash B_{R_0}} |\bd\tilde\psi|^2dx\right\},$$
where the infimum is taken over the set of all $(\psi^*,\tilde \psi)\in H$ such that
$$\int_{B_{r_0}} e^U |\psi^*|^2 dx + \int_{\R^2\backslash B_{R_0}} e^U |\tilde \psi(x)|^2 dx
=1.$$
Hence, by using (\ref{eq:2-40}) we obtain
$$
0<k\leq 1.
$$
It is easy to see that the infimum is achieved and we denote by
$(\psi^*, \tilde \psi)$ the corresponding minimizers. By using the same arguments adopted above,
we can verify that \eqref{eq:2-35} holds in Case 2 as well. In particular we have
\begin{equation}{\label{eq:2-41}}
\left\{ \begin{array}{l}
\D\psi^* + k e^U \psi^*=0\ \ \mbox{in} \ \ B_{r_0},\\
\D\tilde \psi + k e^U \tilde\psi=0\ \ \mbox{in} \ \ \R^2\backslash B_{R_0},\\
\psi^*(\xi_0)=0, \psi^*(r_0)=\tilde \psi(R_0)=c\ \ \mbox{and}\\
\ds\int_{B_{r_0}} e^U \psi^* dx + \int_{\R^2\backslash B_{R_0}}
e^U \tilde \psi(x) dx =0
\end{array}\right.
\end{equation}
Since ${\tilde \psi}' (r) r \to 0$ as $r\to +\infty$, we also have
\beq\label{eq:2-42}
{\psi^*}'(r_0)r_0={\psi^*}' (R_0) R_0.
\eeq
At this point we want to show $\xi_0=\sqrt{8}$. Set
$z(r)=\frac{r^2-8}{r^2+8}$.

Hence, let us assume (by contradiction) that $\xi_0>\sqrt{8}$. Clearly we can use \eqref{eq:2-37}
to obtain
$$
r_0\left(\frac{\psi^*(r_0)}{z(r_0)}\right)^{'} z^2(r_0) -
\xi_0\left(\frac{\psi^*(\xi_0)}{z(\xi_0)}\right)^{'} z^2(\xi_0)
$$
$$
=(1-k)\int^{r_0}_{\xi_0} e^{U(s)}\psi^*(s)z(s)ds\geq 0,
$$
where $\psi^*(s)\leq 0$ and $z(s)\leq 0$ for
$s\geq\xi_0\geq\sqrt{8}$.\\
\noindent Thus,
$r_0({\psi^*}^{'}(r_0)z(r_0)-z'(r_0)\psi^*(r_0))\geq\xi_0{\psi^*}^{'}(\xi_0)z(\xi_0)>0$,
where ${\psi^*}^{'}(\xi_0)<0$ and $z(\xi_0)<0$. Therefore, since $z(r_0)\psi^*(r_0)>0$, we
readily obtain
$$
\frac{{\psi^*}'(r_0)}{\psi^*(r_0)} > \frac{z'(r_0)}{z(r_0)}.
$$
This relation can be used together with \eqref{eq:2-42} to conclude that
\beq\label{eq:2-43}
\frac{{\psi^*}'(R_0)R_0}{\psi^*(R_0)}=
\frac{{\psi^*}'(r_0)r_0}{\psi^*(r_0)}> \frac{z'(r_0)}{z(r_0)} r_0.
\eeq
One the other hand, by a straightforward computation, we have
$$
\frac{z'(r)}{z(r)} = \frac{2r}{r^2-8} - \frac{2r}{r^2+8} = \frac{32r}{r^4-64},
$$
and then
$$
\frac{z'(r)r}{z(r)} = \frac{32r^2}{r^4-64}\ \ \mbox{is decreasing for}\ \ r>\sqrt{8}.
$$
We use this fact together with \eqref{eq:2-43} to obtain
\beq\label{eqn3.32}
 \frac{{\psi^*}'(R_0)R_0}{\psi^*(R_0)} >
\frac{z'(r_0)r_0}{z(r_0)} \geq \frac{z'(R_0)R_0}{z(R_0)},
\eeq
which in turn implies
$$
{\psi^*}'(R_0)z(R_0) > z'(R_0) \psi^* (R_0).
$$
This inequality contradicts the fact that, by using the second equation in \eqref{eq:2-41},
we have
$$
0>-R_0\Biggl(\frac{\psi^*(R_0)}{z(R_0)}\Biggl)' z^2(R_0)=(1-k) \int_{R_0}^\infty
e^{U(s)} \tilde \psi (s) z(s)sds\geq 0.
$$
Thus $\xi_0\leq\sqrt{8}$ and since the reversed inequality holds as well, then $\xi_0=\sqrt{8}$.
As in Case 1 we conclude that $k=1$ and then repeat the argument starting with \eqref{eq:2-42} to
conclude that indeed the second inequality
in \eqref{eqn3.32} must be an equality. Hence $r_0=R_0$ and \eqref{090612:1} shows that $\rho=8\pi$.
In particular, as in Case 1, all the inequalities used in the rearrangement argument are equalities.
Therefore it follows once more from Theorem \ref{Thm2.1} that both $\tilde{\Omega}^+$ and $\tilde{\Omega}^-$
are simply-connected and from the Cauchy-Schwarz inequality that $|\nabla\varphi(x)|=|\nabla\varphi(y)|$
whenever $\varphi(x)=\varphi(y)=c$. Since $\Omega$ is not
simply-connected, there exists a point $P_0\in\partial\Omega$ such
that $\varphi(P_0)=c$ and $\nabla\varphi(P_0)=0$ which in turn yields
$\nabla\varphi(x)=0$ for $x\in\partial\Omega$. This fact clearly contradicts the strong maximum principle
and therefore concludes the proof of Theorem \ref{thm3.1}.
\end{proof}

\section{Existence of a solution at $\rho=8\pi$}\label{sec4}
\setcounter{equation}{0}

Solutions of \eqref{eqn2.1} are critical points of the functional $I_\rho:H_0^1(\O)\mapsto \R$
defined as
\beq\label{eqn4.1}
I_\rho(u)=\frac{1}{2}\int_\O|\nabla u|^2-\rho\log\int_\O h(x)e^u .
\eeq
A well known consequence of the Moser-Trudinger \cite{moser} inequality is that a minimizer $u_\rho$ of
$I_\rho$ exists at least in case $\rho<8\pi$. Actually any such minimizer for $\rho<8\pi$ is the unique solution to
\eqref{eqn2.1} according to the uniqueness theorem. In particular, by using the
implicit function theorem and the invertibility of the linearized equation at $\rho=8\pi$
we have (see for example Proposition 6.1 in \cite{key02} for a proof):

\begin{lemma}\label{lem100612.1}
The following facts are equivalent:\\
(i)   $u_\rho$ converges in $C^2(\overline{\Omega})$ as $\rho\nearrow 8\pi$,\\
(ii)  A subsequence $u_{\rho_n}$ converges in $C^2(\overline{\Omega})$ as $\rho_n\nearrow 8\pi$,\\
(iii) Equation (2.1) possesses a solution at $\rho=8\pi$.\\
(iv)  $I_{8\pi}$ attains its infimum.
\end{lemma}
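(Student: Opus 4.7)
The plan is to prove the cycle $(\mathrm{iii})\Rightarrow(\mathrm{i})\Rightarrow(\mathrm{ii})\Rightarrow(\mathrm{iii})$ and then $(\mathrm{iii})\Leftrightarrow(\mathrm{iv})$. The implications $(\mathrm{i})\Rightarrow(\mathrm{ii})$ is of course trivial, and $(\mathrm{ii})\Rightarrow(\mathrm{iii})$ follows at once by passing to the limit in \eqref{eqn2.1} along the subsequence $\rho_n\nearrow 8\pi$: since $u_{\rho_n}\to u^*$ in $C^2(\overline{\Omega})$ and the nonlinearity $h(x)e^{u_{\rho_n}}/\int_\Omega h(x)e^{u_{\rho_n}}$ converges uniformly, $u^*$ satisfies \eqref{eqn2.1} with $\rho=8\pi$.

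The core implication is $(\mathrm{iii})\Rightarrow(\mathrm{i})$. Suppose that \eqref{eqn2.1} admits a solution $u^*$ at $\rho=8\pi$. By Theorem \ref{thm3.1}, the first eigenvalue of the linearized operator at $u^*$ is strictly positive, hence the linearization is an isomorphism between the relevant Sobolev/H\"older spaces. The implicit function theorem then yields a $C^1$-branch $\{\overline{u}_\rho\}$ of solutions to \eqref{eqn2.1}, defined in an open interval $\rho\in(8\pi-\varepsilon,8\pi+\varepsilon)$, with $\overline{u}_{8\pi}=u^*$ and $\overline{u}_\rho\to u^*$ in $C^2(\overline{\Omega})$ as $\rho\to 8\pi$. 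Now, for any $\rho<8\pi$ the variational functional $I_\rho$ is coercive on $H^1_0(\Omega)$ by Moser--Trudinger and admits a minimizer $u_\rho$; by the uniqueness part of Theorem \ref{thm3.1}, $u_\rho$ is the unique solution of \eqref{eqn2.1}, hence for $\rho\in(8\pi-\varepsilon,8\pi)$ we must have $u_\rho\equiv\overline{u}_\rho$. This shows $u_\rho\to u^*$ in $C^2(\overline{\Omega})$ as $\rho\nearrow 8\pi$, which is (i).

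For the equivalence $(\mathrm{iii})\Leftrightarrow(\mathrm{iv})$, the direction $(\mathrm{iv})\Rightarrow(\mathrm{iii})$ is immediate, since any minimizer of $I_{8\pi}$ is a critical point and hence solves \eqref{eqn2.1} at $\rho=8\pi$. For the converse, assume $(\mathrm{iii})$, so by the previous paragraph $(\mathrm{i})$ holds and $u_\rho\to u^*$ in $C^2(\overline{\Omega})$. Then $I_\rho(u_\rho)\to I_{8\pi}(u^*)$ by direct substitution. For any fixed test function $w\in H^1_0(\Omega)$, the minimality of $u_\rho$ gives $I_\rho(w)\geq I_\rho(u_\rho)$; letting $\rho\nearrow 8\pi$ and using the continuity of $\rho\mapsto I_\rho(w)$, we obtain $I_{8\pi}(w)\geq I_{8\pi}(u^*)$. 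Hence $u^*$ attains $\inf I_{8\pi}$, proving $(\mathrm{iv})$.

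The only nontrivial analytic input is the non-degeneracy of the linearized problem at $\rho=8\pi$, which is exactly what Theorem \ref{thm3.1} provides; that is the main obstacle, and it has already been handled in Section \ref{sec3}. Once that is in hand, the rest of the proof is a routine combination of the implicit function theorem, the uniqueness statement for $\rho<8\pi$, and the lower semicontinuity argument above.
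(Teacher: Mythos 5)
Your proof is correct and follows exactly the route the paper intends: it does not write the argument out (deferring to Proposition 6.1 of \cite{key02}) but explicitly names the same ingredients you use, namely the uniqueness and invertibility of the linearization from Theorem \ref{thm3.1}, the implicit function theorem to continue the branch from a solution at $\rho=8\pi$, and the identification of that branch with the minimizers $u_\rho$ for $\rho<8\pi$. The closing comparison argument for $(\mathrm{iii})\Rightarrow(\mathrm{iv})$ via $I_\rho(w)\geq I_\rho(u_\rho)$ and continuity in $\rho$ is also sound.
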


Let $\widetilde{G}(x,\,p)$ and $\gamma(p)$ be defined in \eqref{eqn1.2}.
We consider the situation where the (unique) branch of minimizers contains
a sequence of blowing up solutions, say $u_{\rho_n}$, as $\rho_n\nearrow8\pi$.
Let $q$ be a blowup point of $u_{\rho_n}(x)$, i.e., there exists a subsequence $u_k\equiv u_{\rho_{n_k}}$ such that
\begin{center}
$u_k(q_k)=\max\limits_{\overline{\O}}u_{\rho_k}(x)\rightarrow
+\infty$ and $q_k\rightarrow q$.
\end{center}

\begin{remark}\label{rem:qmax}
In this situation it can be shown that $q$ is a maximum point of $\,\log h(x)+4\pi\gamma(x)$ and in particular that
\beq\label{eqn4.2}
\frac{1}{8\pi}\inf_{u\in H^1_0(\O)} I_\rho(u) =
-1-\log(\pi)-\sup_{\overline{\Omega}}(h(x)+4\pi\gamma(x)).
\eeq
These facts are well known. See for example Theorem 1.2 and Lemma 2.3 in \cite{BL2} for a proof.
Actually, $q$ is the \un{unique} blow up point of $u_\rho$ as $\rho\nearrow 8\pi$ as shown in the following
Proposition \ref{prop4.1}.
\end{remark}

\begin{Proposition}\label{prop4.1}
Suppose that a sequence of blowing up solutions, denoted by $u_{\rho_n}$, exists for \eqref{eqn2.1}
as $\rho_n\nearrow 8\pi$ and let $q$ be a blow up point. Then,
$u_\rho(x)$ converges to $8\pi G(x,q)$ in $C^2_{loc}(\overline{\Omega}\setminus{q})$ as
$\rho\nearrow 8\pi$.
\end{Proposition}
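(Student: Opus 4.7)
The plan is to carry out the standard blow-up analysis for Liouville-type equations in our setting, combined with mass quantization and the a priori bound $\rho_n\to 8\pi$. As in Section \ref{sec2}, set
$$
v_n(x)=u_{\rho_n}(x)+\log h(x)+\log\rho_n-\log\int_\Omega h e^{u_{\rho_n}},
$$
so that $-\Delta v_n=e^{v_n}$ in $\Omega$ (using that $\log h$ is harmonic in $\Omega$) and $\int_\Omega e^{v_n}\,dx=\rho_n\to 8\pi$. Since $h$ is bounded away from $0$ and $\infty$ on $\overline{\Omega^\ast}$ and $u_{\rho_n}=0$ on $\partial\Omega$, standard boundary gradient estimates for the Dirichlet Laplacian with bounded right hand side in $L^1$ show that $u_{\rho_n}$ is uniformly bounded in a one-sided neighborhood of $\partial\Omega$, so the blow-up point $q$ must lie in the interior of $\Omega$.

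Next I would invoke the Brezis--Merle concentration-compactness alternative for $v_n$: up to a subsequence, either (a) $v_n$ is locally uniformly bounded in $\Omega$, (b) $v_n\to -\infty$ locally uniformly in $\Omega$, or (c) there is a nonempty finite blow-up set $S=\{q_1,\ldots,q_N\}\subset\Omega$ along which $v_n\to -\infty$ locally uniformly on $\overline{\Omega}\setminus S$, while $e^{v_n}\,dx\rightharpoonup \sum_{i=1}^N m_i\,\delta_{q_i}$ weakly as measures. Case (a) is ruled out since $u_{\rho_n}(q_n)\to+\infty$ and case (b) is incompatible with $\int e^{v_n}=\rho_n\to 8\pi$; hence (c) occurs and $q\in S$. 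A rescaling argument at each $q_i$, combined with the Chen--Li classification of entire finite-mass solutions of $-\Delta U=e^U$ in $\R^2$, yields the local mass quantization $m_i\geq 8\pi$. Since $\sum_{i=1}^N m_i\leq 8\pi$, this forces $N=1$, $q_1=q$, and $m_1=8\pi$.

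With this concentration information I would conclude by Green's representation formula: using $u_{\rho_n}=0$ on $\partial\Omega$,
$$
u_{\rho_n}(x)=\int_\Omega G(x,y)\,\rho_n\,\frac{h(y)e^{u_{\rho_n}(y)}}{\int_\Omega h\,e^{u_{\rho_n}}}\,dy\ \longrightarrow\ 8\pi\,G(x,q)
$$
for every $x\neq q$, with the convergence of the right hand side following from the weak-$\ast$ convergence of the measures plus the fact that $G(x,\cdot)$ is bounded on compact subsets of $\overline{\Omega}\setminus\{x\}$. Interior and boundary Schauder estimates applied to the equation satisfied by $u_{\rho_n}-8\pi G(\cdot,q)$ on compact subsets of $\overline{\Omega}\setminus\{q\}$ (where the nonlinear term becomes smooth and uniformly small) then upgrade the convergence to $C^2_{loc}(\overline{\Omega}\setminus\{q\})$. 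The main technical obstacle is the mass quantization $m_i\geq 8\pi$, which rests on the blow-up rescaling $\varepsilon_n$ chosen so that $\max v_n+2\log\varepsilon_n=0$ and on the classification of the entire Liouville profile; however this is by now classical and, in our case, further simplified by the fact that the total mass $\rho_n$ converges to exactly $8\pi$, which automatically precludes multiple concentration points or partial concentration with mass strictly less than $8\pi$.
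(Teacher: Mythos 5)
Your blow-up analysis (the Brezis--Merle alternative, the interior location of blow-up points, the quantization of the local mass to $8\pi$, and the Green representation plus elliptic estimates) is sound and does prove that the \emph{given} sequence $u_{\rho_n}$ converges to $8\pi G(\cdot,q)$ in $C^2_{loc}(\overline{\Omega}\setminus\{q\})$. However, the Proposition claims convergence of the whole branch $u_\rho$ as $\rho\nearrow 8\pi$, and your proof leaves open the possibility that a \emph{different} sequence $\rho_n'\nearrow 8\pi$ produces solutions blowing up at some other point $q'\neq q$: each such sequence would, by your own argument, have a single concentration point, but nothing you wrote forces all sequences to share the same one. Since by Theorem \ref{thm3.1} the solution $u_\rho$ is unique for $\rho<8\pi$ and, by Lemma \ref{lem100612.1}, no subsequence can remain bounded once one blows up, ruling out a second concentration point $q'$ is precisely the missing step, and it is the main content of the paper's proof.

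The paper closes this gap with a continuity argument: assuming $q'\neq q$ is the blow-up point of another sequence $u_{\rho_n'}$, one considers the ratio $m_\rho=\sup_{B(q,\delta)}u_\rho\,/\sup_{\overline{\Omega}}u_\rho$ with $\delta<\frac{1}{2}\,\mathrm{dist}(q,q')$, which depends continuously on $\rho$ because the branch of unique minimizers does. Then $m_{\rho_n}=1$ while $m_{\rho_n'}=o(1)$, so the intermediate value theorem yields $\rho_n''\to 8\pi$ with $m_{\rho_n''}=\frac{1}{2}$; the corresponding solutions still blow up but must have at least two blow-up points, which contradicts exactly the mass quantization you already invoked (each blow-up point carries mass $8\pi$ while the total mass is at most $8\pi$). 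You should insert an argument of this type (or another device exploiting the continuity in $\rho$ of the unique branch) before the Green-representation step; the rest of your proof can then be kept essentially as is.
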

\begin{proof}
Suppose $q'\neq q$ is a blow up point of $u_{\rho_n'}$. Without
loss of generality, we may assume $\rho_{n-1}'<\rho_n<\rho_n'$.
Let $\delta>0$ such that $\delta<\frac{1}{2}\,\mbox{dist}(q,q')$ and
$$
m_\rho=\dfrac{\sup_{B(q,\delta)}u_\rho(x)}{\sup_{\overline{\Omega}}u_\rho(x)}.
$$
\noindent Obviously, $m_\rho$ continuously depends on $\rho$.\\
Since $u_{\rho_n}$ and $u_{\rho_n'}$ blows up at $q$ and $q'$
respectively, we have for large $n$,
$$
m_{\rho_n}=1\ \ \hbox{and}\ \ m_{\rho_n'}=o(1).
$$
Thus, there exists $\rho_n''\in[\rho_n,\rho_n']$ such that
$m_{\rho_n''}=\frac{1}{2}$. Obviously, as $n\rightarrow +\infty$,
$\rho_n''\rightarrow 8\pi$ and
$\sup_{\overline{\Omega}}u_{\rho_n''}\rightarrow +\infty$. By our
choice $m_{\rho_n''}=\frac{1}{2}$, $u_{\rho_n''}$ has at least two
blowup points, which is impossible in view of by now standard
concentration-compactness results \cite{bm}, \cite{yy} for Liouville-type equations.\\
Therefore we conclude that any blow up sequence extracted from $u_\rho$ as $\rho\nearrow 8\pi$
admits $q$ as its unique blow up point. It is well known that
blow up points are necessarily interior points (see Lemma 2.1 in \cite{key02}). Thus the results in
\cite{yy} apply and we conclude that any such sequence must in fact converge
to $8\pi G(x,q)$ in $C^2_{loc}(\overline{\Omega}\setminus{q})$ and we conclude in particular, in view of
the equivalence of Lemma \ref{lem100612.1} above, that the full branch of minimizers $u_\rho$ satisfies to the
same property as well.
\end{proof}

The following asymptotic estimates for
$\rho_n-8\pi$ along a blowing up sequence $u_{\rho_n}$ was obtained in \cite{key02} and \cite{key05}:

\noindent(j) If $\Delta\log h(q)\neq 0$, then
    \begin{eqnarray}\label{eqn4.3}
    \rho_n-8\pi=c(\Delta\log h(q)+o(1))\lambda_ne^{-\lambda_n}
    \end{eqnarray}
\noindent(jj) If $\Delta\log h(q)=0$, then
    \begin{equation}\label{eqn4.4}
    \rho_{n}-8\pi=h(q)(D_h(q)+o(1))e^{-\lambda_{n}},
    \end{equation}
    where $\lambda_{n}=\max_{\overline{\Omega}}u_{n}-\log{(\int_{\Omega}h(x)e^{u_{n}}dx})$,
    $o(1)\rightarrow 0$ as $n\rightarrow +\infty$, and
    \begin{equation}\label{eqn4.5}
    D_h(q)=\lim_{\varepsilon\rightarrow0}\int_{\Omega\setminus B(q,\varepsilon)}
    \dfrac{\frac{h(x)}{h(q)}\,e^{8\pi(\widetilde{G}(x,q)-\gamma(q))}}{\vert x-q \vert^4} dx - \int_{\Omega^c} \frac{dx}{\vert x-q
    \vert^4}.
    \end{equation}

\bigskip

\begin{remark}\label{rem:CPAM}
By using the results in either \cite{EGP} or \cite{MMM} we see that a sequence of solutions for \eqref{eqn2.1}
blowing up as $\rho\to 8\pi$ can be constructed whenever $q$ is a nondegenerate critical point of
$\log h(x)+4\pi\gamma(x)$. Alternatively, by using the condition $D_h(q)\neq 0$, a sequence of solutions for \eqref{eqn2.1}
blowing up as $\rho\to 8\pi$ can be constructed by arguing as in \cite{key06}. In any case, if $q$ is a blow up point,
then $h(q)\neq 0$ by known blow up arguments, see \cite{lisha}.
\end{remark}

\begin{theorem}\label{thm4.1}
Let $\Omega$ be a $C^1$ bounded domain and $h(x)$ satisfy (2.2).
Then equation \eqref{eqn2.1} at $\rho = 8\pi$ admits a solution
if and only if there exists a maximum point $q$ of
$\log{h(x)}+4\pi\gamma(x)$ such that $D_h(q)>0$.
\end{theorem}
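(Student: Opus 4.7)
The plan is to combine Lemma \ref{lem100612.1} with a careful analysis of the blow-up of the minimizer branch $\{u_\rho\}_{\rho<8\pi}$ of $I_\rho$, together with a bubble-type test function computation, mirroring the approach used in \cite{key02} for the simply-connected case but relying in an essential way on the new uniqueness statement (Theorem \ref{thm3.1}) that makes the branch $u_\rho$ unambiguously defined on the whole interval $[0,8\pi)$.

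For the necessary direction (no solution implies $D_h\leq 0$ at the blow-up max): by Lemma \ref{lem100612.1}, absence of a solution at $\rho=8\pi$ is equivalent to blow-up of $u_\rho$ along some $\rho_n\nearrow 8\pi$. Proposition \ref{prop4.1} gives the unique blow-up point $q$, which by Remark \ref{rem:qmax} is a global maximum of $\log h+4\pi\gamma$. Since \eqref{eqn2.2} forces $\log h$ to be harmonic in $\Omega$, $\Delta\log h(q)=0$, and the expansion (jj) reads
$$
\rho_n-8\pi=h(q)\bigl(D_h(q)+o(1)\bigr)e^{-\lambda_n}.
$$
With $\rho_n<8\pi$ and $h(q)>0$, this forces $D_h(q)\leq 0$, ruling out the (global) blow-up maximum being a max with $D_h>0$.

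For the sufficient direction, suppose that some maximum $q^{\ast}$ of $\log h+4\pi\gamma$ satisfies $D_h(q^{\ast})>0$. I would construct an explicit test function $\varphi_\epsilon\in H^1_0(\Omega)$ concentrating at $q^{\ast}$, built from the standard Liouville bubble $\log\frac{\epsilon^{2}}{(\epsilon^{2}+|x-q^{\ast}|^2)^2}$ corrected by $8\pi\widetilde G(x,q^{\ast})$ (suitably cut off and modified to vanish on $\partial\Omega$). An asymptotic computation mirroring that of \cite{key02} yields
$$
I_{8\pi}(\varphi_\epsilon)=-8\pi(1+\log\pi)-8\pi\bigl(\log h(q^{\ast})+4\pi\gamma(q^{\ast})\bigr)-C\,h(q^{\ast})D_h(q^{\ast})\,\epsilon^{2}+o(\epsilon^{2}),
$$
for an explicit constant $C>0$. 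Under $D_h(q^{\ast})>0$ the last term is strictly negative, producing $\inf I_{8\pi}<-8\pi(1+\log\pi)-8\pi\sup_{\overline{\Omega}}(\log h+4\pi\gamma)$. Since any blow-up minimizing sequence would instead saturate, not strictly undercut, this lower bound by \eqref{eqn4.2}, no blow-up can occur and a solution exists at $\rho=8\pi$ by Lemma \ref{lem100612.1}.

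The main obstacle is the precise $O(\epsilon^{2})$ expansion of $I_{8\pi}(\varphi_\epsilon)$: the coefficient of $\epsilon^{2}$ has to be identified exactly as $C\,h(q^{\ast})D_h(q^{\ast})$ with $C>0$. This requires splitting $\int_{\Omega} h\,e^{\varphi_\epsilon}$ into a small ball $B(q^{\ast},r)$ and its complement, invoking the local expansion \eqref{2606.1} on the ball and the Green's function expansion of $\widetilde G(\cdot,q^{\ast})$ on the complement, and tracking the cancellations that reassemble precisely the quantity $D_h(q^{\ast})$ from \eqref{eqn4.5}. A further delicate point is the borderline case $D_h(q^{\ast})=0$, where the $\epsilon^{2}$ term vanishes and one must either push the asymptotic expansion to the next order or appeal to a separate variational argument to settle the borderline.
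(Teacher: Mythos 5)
Your sufficiency argument coincides with the paper's: the same bubble corrected by $8\pi\widetilde G(\cdot,q^{\ast})$, the same expansion of $I_{8\pi}$ (Lemma \ref{lem25.06}, which gives the coefficient $-8\pi D_h(q^{\ast})/\pi$ at order $\varepsilon^2$), and the same contradiction with \eqref{eqn4.2}. Note that the borderline case $D_h(q^{\ast})=0$ you worry about at the end does not arise in this direction, since sufficiency assumes $D_h(q^{\ast})>0$ strictly.

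The genuine gap is in the ``only if'' direction, which your proposal does not actually prove. What you establish under the heading ``necessary direction'' is: no solution at $\rho=8\pi$ implies that the unique blow-up point $q$ of the minimizer branch is a maximum with $D_h(q)\leq 0$. Logically this is the contrapositive of ``$D_h>0$ at \emph{every} maximum implies solvability'', which is weaker than, and already contained in, your sufficiency statement; it is not necessity. Necessity asserts: if a solution exists at $8\pi$, then \emph{some} maximum point has $D_h>0$; equivalently, if some maximum point $q_0$ has $D_h(q_0)\leq 0$, then no solution exists. But when a solution exists the branch $u_\rho$ does not blow up at all (Lemma \ref{lem100612.1}), so there is no blowing-up sequence to which the asymptotic formula \eqref{eqn4.4} could be applied, and your machinery says nothing. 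The paper closes this direction through Lemma \ref{lem4.1}: given any critical point $q_0$ with $D_h(q_0)\leq 0$, one \emph{constructs} a sequence of solutions blowing up at $q_0$ with $\rho_n<8\pi$ (using the existence results quoted in Remark \ref{rem:CPAM}, after a harmonic perturbation of $h$ if $q_0$ is degenerate, the sign of $\rho_n-8\pi$ being controlled by \eqref{eqn4.4}); the uniqueness theorem (Theorem \ref{thm3.1}) then forces this constructed sequence to coincide with the minimizer branch, so the branch blows up and Lemma \ref{lem100612.1} excludes a solution at $8\pi$. The truly delicate borderline is $D_h(q_0)=0$, which the paper handles by replacing $h$ with $h^{1-\epsilon}e^{-8\pi\epsilon\widetilde G(\cdot,q_0)}$, showing via a convexity argument on $d(t)$ that the perturbed quantity is strictly negative, and re-establishing the Bol inequality and uniqueness for this non-subharmonic weight through the boundary-derivative estimate \eqref{eqn4.12}. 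None of these steps appears in your proposal, so the ``only if'' half of the theorem remains unproven.
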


\begin{proof}

We first prove that the condition is sufficient. Suppose $D_h(q)>0$ for a maximum
point $q$ which we can assume without loss of generality to coincide with
the origin $q=0$.\\
We argue by contradiction and suppose that no solutions exist for $\rho = 8\pi$. In view of Lemma \ref{lem100612.1}
we see that necessarily a sequence of blowing up solutions can be found as $\rho_n \nearrow 8\pi$. As observed above,
in this situation \eqref{eqn4.2} holds.
We can assume without loss of generality that $B_1\subset\subset \Omega$ and then define
\begin{eqnarray*}
v_\varepsilon(x)=
\left\{%
\begin{array}{ll}
    4\log\frac{1}{|x|} + 8\pi\widetilde{G}(x,0) \quad  \hbox{for} \quad |x|\geq 1, \\
    2\log\left(\frac{\varepsilon^2+1}{\varepsilon^2+|x|^2}\right) + 8\pi\widetilde{G}(x,0) \quad  \hbox{for}
\quad |x|\leq 1,\\
\end{array}%
\right.
\end{eqnarray*}
so that in particular $v_\ve\equiv 8\pi G(x,q)=8\pi G(x,0)$ in $\Omega\setminus B_1$. Then we obtain the following

\begin{lemma}\label{lem25.06}
It holds,
$$
I_{8\pi}(v_\varepsilon)=-8\pi-8\pi\log{\pi}-8\pi(\log (h(0)) + 4\pi\gamma(0)) -
8\pi\left(\frac{D_h(0)}{\pi}\right)\varepsilon^2+{O}(\ve^3).
$$
\end{lemma}

\begin{proof}
$$
\frac{1}{2}\int_\Omega |\nabla v_\varepsilon|^2 dx =
\frac{1}{2}\int_{\Omega\setminus
B_1}\nabla\left(\log\frac{1}{|x|^4}+8\pi\widetilde{G}(x,0)\right)\nabla\log\frac{1}{|x|^4}dx
$$

$$
+4\pi\left(\int_{\Omega\setminus
B_1}\nabla\log{\frac{1}{|x|^4}}\nabla\widetilde{G}(x,0)dx
+8\pi\int_{\O}\nabla\widetilde{G}(x,0)\nabla\widetilde{G}(x,0)dx\right)
$$

$$
+\frac{1}{2}\int_{B_1}\left|\frac{4|x|}{\varepsilon^2+|x|^2}\right|^2 dx+
8\pi\int_{B_1}\nabla\left(2\log\left(\frac{\varepsilon^2+1}{\varepsilon^2+|x|^2}\right)\right)\nabla\widetilde{G}(x,0)
$$

$$
=\frac{-1}{2}\int_{\partial B_1}
\left(\log\frac{1}{|x|^4}+8\pi\widetilde{G}(x,0)\right)
\frac{\partial}{\partial\nu}\left(\log\frac{1}{|x|^4}\right)d\sigma
$$

$$
-4\pi\int_{\partial B_1}\left(\log\frac{1}{|x|^4}\right)\frac{\partial\widetilde{G}}
{\partial\nu}(x,0)d\sigma-8\pi-8\pi\log\left(\ve^2\right)+16\pi\ve^2+ \mbox{O}(\ve^4)
$$
$$
+8\pi\int_{\partial B_1}
\left(2\log\left(\frac{\varepsilon^2+1}{\varepsilon^2+|x|^2}\right)\right)\frac{\partial\widetilde{G}}
{\partial\nu}(x,0)d\sigma
$$

$$
= -16\pi\log\left(\ve\right) -8\pi+ 32\pi^2 \gamma(0)+16\pi\ve^2+ \mbox{O}(\ve^4),
$$
where $\nu$ denotes the exterior unit normal. We used here the fact that, since $\widetilde{G}(x,0)$ is harmonic, then
$$
\frac{1}{2\pi} \int\limits_{\partial B_1}\widetilde{G}(x,0)d\sigma
=\widetilde{G}(0,0)\equiv \gamma(0),
$$
and in particular
$$
\int_{\partial B_1}\frac{\partial\widetilde{G}}{\partial\nu}(x,0)d\sigma\equiv
\int_{\partial \O}\frac{\partial\widetilde{G}}{\partial\nu}(x,0)d\sigma=0.
$$
Observe moreover that, since $q=0$ is a critical point of $\log(h)+8\pi \widetilde{G}$, we have
\beq\label{2606.1.150812}
h(x)e^{8\pi\widetilde{G}(x,0)}=h(0)e^{8\pi\gamma(0)}+\sum\limits_{i,j=1}^{2} b_{ij}x_i x_j+O(|x|^3),
\eeq
where $b_{11}+b_{22}=0$ because $\log{(h)}+8\pi \widetilde{G}$ is harmonic in $\O$. By using
\eqref{2606.1.150812} and
some straightforward evaluation, we can conclude that
$$
\int_{B_1\setminus B_\ve} h(x)e^{8\pi\widetilde{G}(x,0)}
\left[\frac{(\varepsilon^2+1)^2}{(\varepsilon^2+|x|^2)^2}-\frac{1}{|x|^4}\right]dx=
$$
$$
-h(0)e^{8\pi\gamma(0)}\frac{\pi(\ve^2-1)^2}{2\ve^2}+\mbox{O}(\varepsilon)
=h(0)e^{8\pi\gamma(0)}\left[-\frac{\pi}{2\ve^2}+\pi+\mbox{O}(\varepsilon)\right],
$$
and
$$
\int_{\Omega\setminus B_\ve}\dfrac{h(x)e^{8\pi \widetilde{G}(x,0)}}{|x|^4}\,dx=
h(0)e^{8\pi\gamma(0)}\int_{\Omega \setminus B_\ve}\dfrac{\frac{h(x)}{h(0)}\,e^{8\pi(\widetilde{G}(x,0)-\gamma(0))}\;-1}
{|x|^4}\,dx
$$
$$
+h(0)e^{8\pi\gamma(0)}\int_{\mathbb{R}^2 \setminus
B_\varepsilon}\frac{dx}{|x|^4}
-h(0)e^{8\pi\gamma(0)}\int_{\mathbb{R}^2\setminus\Omega}\frac{dx}{|x|^4}
$$
$$
=h(0)e^{8\pi\gamma(0)}\left[D_h(0)+\frac{\pi}{\ve^2}+\mbox{O}(\varepsilon)\right],
$$
and
$$
\int_{B_\ve} h(x)e^{8\pi\widetilde{G}(x,0)}\frac{(\ve^2+1)^2 }{(\ve^2+|x|^2)^2}dx =
h(0)e^{8\pi\gamma(0)}\frac{\pi(\ve^2+1)^2}{2\ve^2}+ \mbox{O}(\varepsilon)
$$
$$
=h(0)e^{8\pi\gamma(0)}\left[\frac{\pi}{2\ve^2}+\pi + \mbox{O}(\varepsilon)\right].
$$
Therefore we have
$$
\int_\Omega h(x)e^{u_\varepsilon(x)}dx
=\int_{\Omega\setminus B_1}\dfrac{h(x)e^{8\pi \widetilde{G}(x,0)}}{|x|^4}\,dx+
\int_{B_1} h(x)e^{8\pi\widetilde{G}(x,0)}\frac{(\varepsilon^2+1)^2}{(\varepsilon^2+|x|^2)^2}dx
$$
$$
=\int_{\Omega\setminus B_\ve}\dfrac{h(x)e^{8\pi \widetilde{G}(x,0)}}{|x|^4}\,dx-
\int_{B_1\setminus B_\ve}\dfrac{h(x)e^{8\pi \widetilde{G}(x,0)}}{|x|^4}\,dx
$$
$$
+\int_{B_1\setminus B_\ve} h(x)e^{8\pi\widetilde{G}(x,0)}\frac{(\varepsilon^2+1)^2}{(\varepsilon^2+|x|^2)^2}dx+
\int_{B_\ve} h(x)e^{8\pi\widetilde{G}(x,0)}\frac{(\varepsilon^2+1)^2}{(\varepsilon^2+|x|^2)^2}dx
$$
$$
=h(0)e^{8\pi\gamma(0)}\left[D_h(0)+\frac{\pi}{\ve^2}-\frac{\pi}{2\ve^2}+\pi+\frac{\pi}{2\ve^2}+\pi+\mbox{O}(\varepsilon)\right]
$$
$$
=h(0)e^{8\pi\gamma(0)}\frac{\pi}{\ve^2}\left[1+\frac{D_h(0)+2\pi}{\pi}\ve^2  +\mbox{O}(\ve^3) \right].
$$

At this point we may collect together the above estimates to conclude that
$$
I_{8\pi}(v_\varepsilon)=\frac{1}{2}\int_\Omega|\nabla
v_\varepsilon|^2 -8\pi\log\int_\Omega h(x)e^{v_\varepsilon(x)}
$$

$$
=-16\pi\log(\ve)-8\pi +32\pi^2\gamma(0)+16\pi\ve^2+\mbox{O}(\ve^4)
$$

$$
-8\pi\log\left(\frac{\pi}{\ve^2} h(0)e^{8\pi\gamma(0)}\right)-8\pi \log\left(1+\frac{D_h(0)+2\pi}{\pi}\ve^2  +\mbox{O}(\ve^3) \right)
$$
$$
=-8\pi-8\pi\log\left(\pi\right)
-8\pi\left(\log(h(0))+4\pi \gamma(0)\right)+16\pi\ve^2-8\pi\frac{D_h(0)+2\pi}{\pi}\,\ve^2  +\mbox{O}(\ve^3)
$$
$$
-8\pi-8\pi\log\left(\pi\right)
-8\pi\left(\log(h(0))+4\pi \gamma(0)\right)-8\pi\frac{D_h(0)}{\pi}\,\ve^2+\mbox{O}(\ve^3).
$$

\end{proof}

The expansion provided by Lemma \ref{lem25.06} can be used together with the assumption
$D_h(0)>0$ to obtain
$$
\inf\limits_{u\in H^1_0(\O)} I_{8\pi}(u) < -8\pi -8\pi\log(\pi)-8\pi(\log h(0)+4\pi\gamma(0))
$$
$$
\qquad\qquad=-8\pi -8\pi\log(\pi)-8\pi\displaystyle\sup_{\overline{\Omega}}(\log h(x)+4\pi\gamma(x)),
$$
which is in contradiction with \eqref{eqn4.2}. Hence a solution exists and the sufficiency of the condition is proved.\\

Next, let us prove the necessary part and suppose that \eqref{eqn2.1} at $\rho=8\pi$ admits a solution.
We want to prove in this situation a stronger result, that is, $D_h(q)>0$ for any maximum
point $q$ of $\log h(x)+4\pi\gamma(x)$. By contradiction we assume that a maximum point $q_0$ exists such that
$D_h(q_0)\leq 0$.
The following Lemma \ref{lem4.1} shows in this case that $u_\rho$
blows up as $\rho\nearrow 8\pi$. This is of course in contradiction with Lemma \ref{lem100612.1} and
we may conclude that indeed $D_h(q)>0$ for all maximum points.
\end{proof}

\begin{lemma}\label{lem4.1}
Suppose that $q$ is a critical point of $\log h(x)+4\pi\gamma(x)$
and $D_h(q)\leq0$. Then there exists a sequence of solutions $u_n$ of
\eqref{eqn2.1} with $\rho_n<8\pi$ and $\rho_n\nearrow8\pi$ such
that $u_n$ blows up at $q$.
\end{lemma}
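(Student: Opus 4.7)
My plan is to construct the sequence $u_n$ directly via the Lyapunov--Schmidt perturbation method used for blowing up solutions of nearly critical Liouville-type problems, following \cite{key06}, \cite{EGP}, \cite{MMM}. For each large parameter $\lambda>0$ and each point $\xi$ close to $q$, introduce the Liouville bubble
$$
U_{\lambda,\xi}(x)=\log\frac{e^{\lambda}}{\bigl(1+e^{\lambda}h(\xi)|x-\xi|^{2}/8\bigr)^{2}},
$$
so that $-\Delta U_{\lambda,\xi}=h(\xi)\,e^{U_{\lambda,\xi}}$ in $\mathbb{R}^{2}$ and $\int_{\mathbb{R}^{2}}h(\xi)e^{U_{\lambda,\xi}}\,dx=8\pi$. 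Let $PU_{\lambda,\xi}\in H^{1}_{0}(\Omega)$ denote the projection obtained by adding the harmonic extension of $-U_{\lambda,\xi}|_{\partial\Omega}$. Standard estimates show that, away from $\xi$, $PU_{\lambda,\xi}(x)=8\pi\bigl(\widetilde{G}(x,\xi)-\gamma(\xi)\bigr)+o(1)$ as $\lambda\to+\infty$, with quantitative control of the remainder.

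Second, I would seek an exact solution of the form $u=PU_{\lambda,\xi}+\phi$ with $\phi$ in the orthogonal complement of the approximate kernel of the linearization. The latter is three-dimensional and spanned (modulo higher-order terms) by $\partial_{\lambda}PU_{\lambda,\xi}$ and $\partial_{\xi_{i}}PU_{\lambda,\xi}$ for $i=1,2$, reflecting the scaling and translation invariances of the limiting Liouville problem on $\mathbb{R}^{2}$. Exploiting the invertibility of the linearized operator on the orthogonal complement together with the implicit function theorem, one obtains a small correction $\phi=\phi_{\lambda,\xi,\rho}$ solving the equation modulo this kernel; what remains is a finite-dimensional reduced system in $(\lambda,\xi,\rho)$.

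Third, I would analyze the reduced system. Its two components coming from the translation generators reduce to leading order to the critical-point equation $\nabla_{\xi}\bigl[\log h(\xi)+4\pi\gamma(\xi)\bigr]=o(1)$, which is solved by $\xi=q+o(1)$ thanks to the hypothesis that $q$ is a critical point. The scaling component produces precisely the asymptotic relation \eqref{eqn4.4}, namely
$$
\rho-8\pi=h(q)\bigl(D_{h}(q)+o(1)\bigr)e^{-\lambda}.
$$
Under the assumption $D_{h}(q)<0$, this forces $\rho<8\pi$ for all sufficiently large $\lambda$, and inverting the map $\lambda\mapsto\rho(\lambda)$ yields the desired sequence $\rho_{n}\nearrow 8\pi$ with corresponding blowing up solutions $u_{n}$ concentrating at $q$.

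Two technical difficulties should be expected. The first is the possible degeneracy of $q$ as a critical point of $\log h+4\pi\gamma$: the translation components of the reduced system are then not immediately invertible by the implicit function theorem and the construction has to be adapted along the lines of \cite{key06}, using for instance a local linking/min--max on a neighborhood of $q$ rather than the implicit function theorem. The second, and the genuinely delicate one, is the borderline case $D_{h}(q)=0$, in which the leading term of \eqref{eqn4.4} does not pin down the sign of $\rho-8\pi$; I would handle it by a density/perturbation argument, replacing $h$ by a slight modification $h_{\varepsilon}$ for which $D_{h_{\varepsilon}}(q)<0$, constructing the blowing up sequence for $h_{\varepsilon}$, and then passing to a diagonal limit while invoking the uniqueness in Theorem \ref{thm3.1} to transfer the blow up to the original weight $h$.
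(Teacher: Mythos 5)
Your outline of Case 1 for a nondegenerate maximum point matches the paper: there the construction of a blowing up branch with $\rho_n<8\pi$ is simply quoted from \cite{EGP}, \cite{MMM} (see Remark \ref{rem:CPAM}) together with the sign information from \eqref{eqn4.4}, and the uniqueness of Theorem \ref{thm3.1} identifies it with the full solution branch. For a degenerate critical point the paper does not use a min--max variant of the finite-dimensional reduction; it perturbs the weight to $h_\varepsilon(x)=h(x)\exp(\varepsilon[(x_1-q_1)^2-(x_2-q_2)^2])$, which preserves \eqref{eqn2.2} because the exponent is harmonic, makes $q$ nondegenerate with $D_\varepsilon(q)<0$, and then recovers solutions of the unperturbed problem by fixing the height $\max_{\overline{\Omega}}u_\varepsilon=C$ as in \eqref{eqn4.6} and letting $\varepsilon\to 0$ and then $C\to+\infty$. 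Your alternative might be workable, but as stated it is only a pointer to \cite{key06}, not an argument.

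The genuine gap is in your treatment of $D_h(q)=0$. You propose to replace $h$ by some $h_\varepsilon$ with $D_{h_\varepsilon}(q)<0$ and then ``invoke the uniqueness in Theorem \ref{thm3.1}.'' Two things go wrong. First, you do not exhibit such an $h_\varepsilon$: the paper's choice $h_\epsilon=(h)^{1-\epsilon}e^{-8\pi\epsilon\widetilde{G}(\cdot,q)}$ is produced by the convexity argument for $d(t)$ in \eqref{4.7}--\eqref{4.9}, which is precisely the device guaranteeing $D_\epsilon(q)=d(1-\epsilon)<0$; a generic small perturbation fixing the critical point has no reason to lower $D_h(q)$. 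Second, and more seriously, this modification (indeed any modification built from $\widetilde{G}(\cdot,q)$) violates hypothesis \eqref{eqn2.2}, because the zero extension $-\widehat{G}(\cdot,q)$ fails to be subharmonic across $\partial\Omega$; consequently Theorem \ref{thm3.1} does not apply to the modified problem and cannot simply be ``invoked.'' The paper must re-establish uniqueness for \eqref{eqn4.10} by hand: it proves the boundary derivative estimate \eqref{eqn4.12} via a blow up/Hopf lemma dichotomy, deduces the differential inequality \eqref{eqn4.11} for a suitable extension, re-derives the Bol inequality for the modified weight, and only then re-runs the proof of Theorem \ref{thm3.1} for small $\epsilon$. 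Without this step your diagonal limit has no uniqueness to lean on, so the blowing up sequence constructed for $h_\varepsilon$ cannot be identified with the branch whose behaviour you need to transfer to the original weight.
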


\begin{proof}

The proof will be divided in two cases.

\item{\textbf{Case 1:}} $D_h(q)<0$.

If $q$ is a nondegenerate critical point of $\log
h(x)+4\pi\gamma(x)$, then by Remark \ref{rem:CPAM} we can construct
a sequence of blowing-up solutions $u_n$ of \eqref{eqn2.1} with $\rho=\rho_n$
whose unique blow up point is $q$ (see Proposition \ref{prop4.1}). Of course $\rho_n<8\pi$
because of \eqref{eqn4.4} and $D_h(q)<0$.

If $q$ is a degenerate critical point
we assume without loss of generality that
\begin{eqnarray*}
\frac{\partial^2}{\partial x_1\partial x_2}(\log
h(x)+4\pi\gamma(x))=0 \quad \hbox{at} \quad x=q.
\end{eqnarray*}
Then we let
$$
h_\varepsilon(x)=h(x)\exp\left(\varepsilon[(x_1-q_1)^2-(x_2-q_2)^2]\right),
$$
where $q=(q_1,\,q_2)$. It is easy to see that $h_\varepsilon(x)$
satisfies \eqref{eqn2.2}, and $q$ is a nondegenerate critical
point of $\log h_\varepsilon(x)+4\pi\gamma$ for any
$\varepsilon>0$. Let $D_\varepsilon(q)$ be the quantity defined in
 \eqref{eqn4.5}, where $h(x)$ is replaced by $h_\varepsilon(x)$.
Obviously for small $\varepsilon>0$ we have $D_\varepsilon(q)<0$.

Let \eqref{eqn2.1}$_\varepsilon$ denote problem \eqref{eqn2.1}
where $h(x)$ has been replaced by $h_\varepsilon(x)$. Since
$h_\varepsilon(x)$ satisfies \eqref{eqn2.2}, then
\eqref{eqn2.1}$_\varepsilon$ admits a unique solution
$u_\rho^\epsilon(x)$ for any $\rho<8\pi$. Since $q$ is a nondegenerate critical point
of $\log h_\varepsilon+4\pi\gamma$
then we can construct a sequence of solutions for \eqref{eqn2.1}$_\varepsilon$ which blows
up at $q$ (see Remark \ref{rem:CPAM}). Thus, by using \eqref{eqn4.4}, the fact that $D_\varepsilon(q)<0$
and the uniqueness theorem, we conclude that $u_\rho^\epsilon(x)$
coincides with this sequence and hence blows up as $\rho\nearrow 8\pi$.

Let $C$ be a fixed large positive number and $\delta$ be a small
positive number. Then for each $\varepsilon > 0$, there exists a
$\rho^\varepsilon \in (0,8\pi)$ such that the solution
$u_\varepsilon = u_{\rho}^\varepsilon$ of
(\ref{eqn2.1})$_\varepsilon$ with $\rho=\rho^\varepsilon$
satisfies
\begin{equation}\label{eqn4.6}
\max_{\overline{\Omega}}u_\varepsilon(x)= C\quad \mbox{and}\ \
\sup_{B(q,\delta)}u_\varepsilon(x) =  C\quad \geq \quad 2
\sup_{\overline{\Omega} \setminus B(p,\delta)}u_\varepsilon(x).
\end{equation}
By letting $\varepsilon\rightarrow 0$, and $\rho^\varepsilon
\rightarrow\rho(C) \in (0,8\pi]$, there exists a solution $u(x;C)$
of (\ref{eqn2.1}) such that (\ref{eqn4.6}) holds. Clearly Theorem \ref{thm3.1} implies
that $u(x;C_1)\neq u(x;C_2)$ and $\rho(C_1)\neq\rho(C_2)<8\pi$ whenever $C_1\neq C_2$.
As $C\rightarrow +\infty$ we obtain a sequence of solutions for \eqref{eqn2.1} which
blows up at $q$ which is the desired conclusion in \textbf{Case 1}.

\item{\textbf{Case 2:}} $D_h(q)=0$

For $0<t<1$ we define
$$
d(t)=\int_{\Omega} \dfrac{\left(\frac{h(x)}{h(q)}\right)^t e^{8\pi
t(\widetilde{G}(x,q)-\gamma(q) )}-1}{\vert x-q \vert^4}\, dx -
\int_{\Omega^c} \dfrac{dx}{\vert x-q \vert^4}.
$$
Clearly,
\begin{equation}\label{4.7}
d'(t)=\int_{\Omega}\dfrac{\left[\log{\left(\frac{h(x)}{h(q)}\right)}+8\pi(\widetilde{G}(x,q)-\gamma(q))\right]
\left(\frac{h(x)}{h(q)}\right)^t
e^{8\pi t(\widetilde{G}(x,q)-\gamma(q))}}{\vert x-q \vert^4}\, dx
\end{equation}

and

\begin{equation}\label{4.8}
d''(t)=\int_{\Omega}\dfrac{\left[\log{\left(\frac{h(x)}{h(q)}\right)}+8\pi(\widetilde{G}(x,q)-\gamma(q))\right]^2
\left(\frac{h(x)}{h(q)}\right)^t
e^{8\pi t(\widetilde{G}(x,q)-\gamma(q))}}{\vert x-q \vert^4}\, dx.
\end{equation}

Since $\log\left(\frac{h(x)}{h(q)}\right)+8\pi(\widetilde{G}(x,q)-\gamma(q))$ is harmonic and
$\log\left(\frac{h(x)}{h(q)}\right) + 8\pi(\widetilde{G}(x,q)-\gamma(q))=O(\vert x-q \vert^2)$,
then the integral defining $d'(t)$ is well-defined in the sense of the following limit
$$
\lim_{\varepsilon\rightarrow 0} \int_{\Omega\setminus
B(q,\varepsilon)}
\dfrac{\left[\log{\left(\frac{h(x)}{h(q)}\right)}+8\pi(\widetilde{G}(x,q)-\gamma(q))\right]\left(\frac{h(x)}{h(q)}\right)^t
e^{8\pi t(\widetilde{G}(x,q)-\gamma(q))}}{\vert x-q \vert^4}\, dx.
$$

Since $d(1)=D_h(q)=0$ and $d(0) = -\int_{\Omega^c}\frac{dx}{\vert x-q \vert^4} < 0 $, by using $d''(t)>0$,
we have $d(t)<0$ for $t\in [0,1)$, that is

\beq\label{4.9}
\int_{\Omega} \dfrac{\left(\frac{h(x)}{h(q)}\right)^t e^{8\pi t(\widetilde{G}(x,q)-\gamma(q) )}-1}{\vert x-q \vert^4}\,dx
- \int_{\Omega^c} \dfrac{dx}{\vert x-q \vert^4} < 0.
\eeq

Let
$h_\epsilon(x)=(h(x))^{1-\epsilon}e^{-8\pi\epsilon\widetilde{G}(x,q)}$.
Then $q$ is a critical point of $\log
h_\epsilon(x)+8\pi\widetilde{G}(x,q)$, and
$$
D_\epsilon(q)=\int_{\Omega}
\frac{\frac{h_\epsilon(x)}{h_\epsilon(q)}\,e^{8\pi(\widetilde{G}(x,q)-\gamma(q))}-1}{|x-q|^4}\,dx
-\int_{\Omega^c}\frac{dx}{|x-q|^4}=d(1-\epsilon)<0.
$$
Now we consider
\begin{eqnarray}\label{eqn4.10}
\left\{%
\begin{array}{ll}
    \Delta u^\epsilon + \rho\frac{h_\epsilon(x)e^{u^\epsilon}}{\int_{\O}h_\epsilon(x)e^{u^\epsilon}dx}
& \hbox{in} \quad\O \\
    u^\epsilon=0 & \hbox{on} \quad \partial\O \\
\end{array}%
\right.
\end{eqnarray}
Note that
$$
\log
{h_\epsilon(x)}=(1-\epsilon)\log(h(x))-8\pi\epsilon\widetilde{G}(x,q)
$$
$$
\qquad\qquad\qquad\qquad\qquad
=(1-\epsilon)\log{h(x)}-8\pi\epsilon G(x,q)-4\epsilon\log|x-q|.
$$
By \eqref{eqn2.2} $\log h(x)$ can be extended to $\Omega^*$ as a
subharmonic function. As above, $G(x,q)$ can also be extended to
$\Omega^*$ by setting
$$
    \widehat{G}(x,q)=\left\{%
\begin{array}{ll}
    G(x,q) & \hbox{ if }\,x\in\Omega, \\
    0 & \hbox{ if }\,x\in\Omega^*\setminus\Omega. \\
\end{array}%
\right.
$$

Thus, we can extend $\log h_\epsilon(x)$ to the larger domain
$\Omega^*$. However, the extended function $-\widehat{G}(x,q)$ is
not subharmonic in $\Omega^*$. Hence $\log h_\epsilon(x)$ does not
satisfy \eqref{eqn2.2} and therefore Theorem \ref{thm3.1}
cannot be applied to \eqref{eqn4.10}. Nevertheless, we
will see in the following that the uniqueness theorem is still valid
 for \eqref{eqn4.10} provided that $\epsilon$ is small enough.

Since for $\rho=0$ the linearized problem has positive first eigenvalue, then, of course,
there is a small $\rho_0 > 0$, which do not depend on $\epsilon$, such that there is only one solution to
\eqref{eqn4.10} for $\rho \leq \rho_0$. For $\rho >\rho_0$ we set

\begin{eqnarray*}
u^*_\epsilon(x)=
\left\{%
\begin{array}{ll}
    u^\epsilon(x)-8\pi\epsilon [G(x,\,p)+\displaystyle\frac{1}{2\pi}\log|x-p|], & \hbox{if}\,x\in\O \\
    -4\epsilon\log|x-p|, & \hbox{if}\,x\in\O^{*}\setminus \O, \\
\end{array}%
\right.
\end{eqnarray*}
where $u^\epsilon(x)$ is a solution for \eqref{eqn4.10}. Then $u^*_\epsilon(x)\in
C(\Omega^*)$ and
\begin{eqnarray}\label{eqn4.11}
\Delta
u^*_\epsilon(x)+\rho\frac{h^{1-\epsilon}(x)e^{u^*_\epsilon(x)}}{\int_\O
h^{1-\epsilon}e^{u^*_\epsilon(x)}dx}\geq0\,\,\hbox{ in
}\,\O^*,
\end{eqnarray}
in the distribution sense, provided that the following holds:
\begin{eqnarray}\label{eqn4.12}
\frac{\partial}{\partial \nu}[u^\epsilon(x)-8\pi\epsilon
G(x,\,q)]\leq0 \,\hbox{ for }\,x\in \O^*\cap\partial\O.
\end{eqnarray}

We prove \eqref{eqn4.12} by contradiction. Suppose that there exists a sequence of
solutions $u^{\rho_k}_{\epsilon_k}$ for
\eqref{eqn4.10}$\equiv$\eqref{eqn4.10}$_{\epsilon_k}$ with $\rho_k\geq\rho_0$ such
that
\begin{eqnarray}\label{eqn4.13}
\frac{\partial
u^{\rho_k}_{\varepsilon_k}(x_k)}{\partial\nu}\geq-C\epsilon_k
\,\hbox{ for some}\, x_k\in\partial\Omega,
\end{eqnarray}
where $-C=\inf_{x\in\partial\Omega}\frac{\partial G(x,q)}{\partial\nu}$.

If $u^{\rho_k}_{\epsilon_k}$ is uniformly bounded in $\overline{\Omega}$, then there is a subsequence of
$u^{\rho_k}_{\epsilon_k}$, which converges to a function $u$ which satisfies
\begin{eqnarray*}
\left\{%
\begin{array}{ll}
    \Delta u + \rho^*_0\displaystyle\frac{h_{\epsilon_0^*}(x)e^{u(x)}}
{\int_{\O}h_{\epsilon^*_0}(x)e^{u(x)}dx}=0 \quad \hbox{in}\quad\O, \\
    u=0, \, \hbox{on}\,\partial\O,  \,\hbox{ and }\,\frac{\partial u}{\partial\nu}(x_0)\geq 0 \quad
\hbox{for some}\quad x_0\in\partial\O, \\
\end{array}%
\right.
\end{eqnarray*}
where
$\epsilon^*_0=\displaystyle\lim_{k\rightarrow+\infty}\epsilon_k$,
$\rho^*_0=\displaystyle\lim_{k\rightarrow+\infty}\rho_k\geq\rho_0>0$
and $x_0 = \displaystyle\lim_{k\rightarrow+\infty} x_k$. Since
$u>0$ on $\partial\Omega$, then the Hopf boundary Lemma says that $\frac{\partial u}{\partial\nu}(x)<0$ for all
$x\in\partial\Omega$, which is a contradiction to $\frac{\partial u}{\partial\nu}(x_0) \geq 0$.

On the other side, if there exists a blowing up subsequence (which we denote by $u_k$) of
$u^{\rho_k}_{\epsilon_k}$, then $\rho_k\rightarrow
8\pi,\epsilon_k\rightarrow\epsilon^*_0$, and
$\rho_k\frac{h_{\epsilon_k}(x)e^{u_k}}{\int_\Omega
h_{\epsilon_k}(x)e^{u_k}dx}\rightharpoonup 8\pi\delta_{x_1}$ for
some $x_1\in\Omega$. Furthermore, $u_k\rightarrow G(x,\,x_1)\,
\hbox{ in }\, C^2(\overline{\Omega}\setminus\{x_1\})$. At this point \eqref{eqn4.13} implies
$\frac{\partial G(x_0,x_1)}{\partial\nu}=\displaystyle\lim_{k\rightarrow+\infty}
\frac{\partial u_k}{\partial\nu}(x_k)\geq0$, where
$x_0=\displaystyle\lim_{k\rightarrow+\infty}x_k$, which is once more
a contradiction to the Hopf boundary Lemma. Hence \eqref{eqn4.12}
holds for any $\epsilon$ small enough.

Since $u_\epsilon^*$ satisfies the differential inequality
\eqref{eqn4.11}, we can follow the proof of Theorem \ref{Thm2.1}
to show that the Bol's inequality holds for $u^\epsilon$, i.e.
for any $\omega\Subset\O$, we have
$$
2\ell^2_\epsilon(\partial\omega)\geq
m_\epsilon(\omega)(8\pi-m_\epsilon(\omega)),
$$
\[
 \hbox{where}\ \
\ell_\epsilon(\partial\omega)=\int_{\partial\omega}e^{v_\epsilon(x)/2}ds,\,\,m_\epsilon(\partial
\omega)=\int_\omega e^{v_\epsilon(x)}ds,
\]
and
\[
v_\epsilon(x)=u^\epsilon(x)+\log{
h_\epsilon(x)}+\log\rho-\log\int_{\Omega} h_\epsilon(x)e^{u^\epsilon(x)}dx.
\]

By using the Bol's inequality, we can follow the proof of Theorem
\ref{thm3.1} to show that equation \eqref{eqn4.10} admits at most one solution for
$\rho\leq8\pi$ as well for any $\epsilon$ small enough.

Since $D_\epsilon(q)<0$, then we can apply the result obtained in \textbf{Case 1}. Therefore
the solution $u^\rho_\epsilon(x)$ of \eqref{eqn4.10} blows up at $q$ as $\rho\nearrow 8\pi$.
However, the same argument adopted in \textbf{Case 1} shows that $u^\rho(x)$, the solution of
\eqref{eqn2.1}, blows up at $q$ as well, which is the desired result.
\end{proof}

\begin{remark}\label{rem.qmax2}
The proof of Theorem \ref{thm4.1} shows in particular
that if there exists a maximum point $q$ of $\log h(x)+4\pi\gamma(x)$ with $D_h(q)>0$, then we have
$D_h(p)>0$ for any other maximum point $p$. See Lemma \ref{lem4.1} and the few lines above it.
\end{remark}

The following results will provide us with a proof of Corollaries \ref{cor1.3-add} and
\ref{cor1.3} in the more general situation where \eqref{eqn2.1} is concerned. The situation where
$\om$ is simply-connected has been already discussed in \cite{key02} and we will not pursue it here
any further. The nondegeneracy of the maximum point as stated in Corollary \ref{cor1.3-add} requires
a more subtle analysis which is the content of Theorem \ref{thm4.2} below.

\begin{corollary}\label{cor4.1} Let $\Omega\subset \R^2$ be an open and bounded domain of class $C^{1}$.
If $q$ is a critical point of $\log h(x)+4\pi\gamma(x)$ with
$D_h(q)\leq 0$, then $q$ is a maximum point. Furthermore, $q$ is the
unique maximum point.
\end{corollary}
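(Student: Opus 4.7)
The plan is to bootstrap from Lemma \ref{lem4.1}, Theorem \ref{thm4.1}, and Proposition \ref{prop4.1} by exploiting the fact that, in view of Theorem \ref{thm3.1}, for $\rho<8\pi$ there is a unique solution to \eqref{eqn2.1}, so the blow-up branches we manufacture must all coincide with the minimizer branch $u_\rho$.

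First I would show that $q$ is a maximum point of $\log h(x)+4\pi\gamma(x)$. Since $D_h(q)\leq 0$, Lemma \ref{lem4.1} provides a sequence $u_n$ of solutions of \eqref{eqn2.1} with $\rho_n\nearrow 8\pi$ blowing up at $q$. By the uniqueness statement in Theorem \ref{thm3.1}, necessarily $u_n=u_{\rho_n}$ is the branch of minimizers. Remark \ref{rem:qmax} then forces any blow-up point of the minimizer branch to be a maximum point of $\log h(x)+4\pi\gamma(x)$, hence $q$ is a maximum point.

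For uniqueness, I would argue by contradiction: suppose $p\neq q$ is another maximum point. Since $u_{\rho_n}$ does not converge in $C^2(\overline{\Omega})$, Lemma \ref{lem100612.1} tells us that \eqref{eqn2.1} admits no solution at $\rho=8\pi$. Theorem \ref{thm4.1} then rules out the existence of any maximum point with positive $D_h$, so $D_h(p)\leq 0$ as well. Applying Lemma \ref{lem4.1} to $p$ yields a sequence of solutions blowing up at $p$ as $\rho\nearrow 8\pi$. By uniqueness (Theorem \ref{thm3.1}) this sequence must again coincide with the minimizer branch $u_\rho$. But Proposition \ref{prop4.1} asserts that $u_\rho$ has a unique blow-up point, contradicting $p\neq q$.

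The statement is therefore a fairly short corollary; the real substance is already contained in the previously established pieces. The only delicate point is making sure that the blow-up branch produced by Lemma \ref{lem4.1} (via the approximation by nondegenerate perturbations $h_\varepsilon$) is genuinely the minimizer branch, and this is precisely what the uniqueness theorem for $\rho<8\pi$ guarantees. I do not anticipate any serious technical obstacle beyond carefully invoking Theorems \ref{thm3.1} and \ref{thm4.1}, Lemmas \ref{lem100612.1} and \ref{lem4.1}, and the unique-blow-up statement in Proposition \ref{prop4.1}.
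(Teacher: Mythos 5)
Your proposal is correct and follows essentially the same route as the paper: blow-up at $q$ via Lemma \ref{lem4.1} plus Remark \ref{rem:qmax} for maximality, and then the observation that $D_h(p)\leq 0$ at every other maximum point (which you derive from Lemma \ref{lem100612.1} and the sufficiency half of Theorem \ref{thm4.1}, exactly the content of Remark \ref{rem.qmax2}) so that Lemma \ref{lem4.1} applied at $p$ contradicts the uniqueness of the blow-up point in Proposition \ref{prop4.1}. Your explicit appeal to Theorem \ref{thm3.1} to identify the manufactured blow-up sequences with the minimizer branch is a point the paper leaves implicit, but it is the same argument.
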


\begin{proof}
By Lemma \ref{lem4.1} and Proposition \ref{prop4.1} we see that $q$ is the blow up point of a sequence of
blowing up solutions. Therefore $q$ is a maximum point, see Remark \ref{rem:qmax}. To prove
the uniqueness of any such maximum point, observe that Remark \ref{rem:qmax}
says that indeed $D_h(p)\leq 0$ for any other maximum point. Now suppose
that there exists another maximum point $q'\neq q$. Then Lemma \ref{lem4.1}
yields a sequence of blowing up solutions $u_{\rho_n'}$ whose blow up point
should be $q'$. Of course, this is a contradiction to Proposition \ref{prop4.1}.
\end{proof}

\begin{corollary}\label{cor4.2} Let $\Omega\subset \R^2$ be an open and bounded domain of class $C^{1}$.
If $\,\log h(x)+4\pi\gamma(x)$ admits more than one maximum point,
then equation \eqref{eqn2.1} has a solution for $\rho=8\pi$.
\end{corollary}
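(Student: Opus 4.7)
The plan is to argue by contradiction, leveraging Theorem \ref{thm4.1} and Corollary \ref{cor4.1} in tandem. Suppose that $\log h(x)+4\pi\gamma(x)$ has (at least) two distinct maximum points, call them $q_1\neq q_2$, but that \eqref{eqn2.1} admits no solution at $\rho=8\pi$.

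By the necessary direction of Theorem \ref{thm4.1}, the absence of a solution at $\rho=8\pi$ forces $D_h(q)\leq 0$ at every maximum point $q$ of $\log h(x)+4\pi\gamma(x)$. In particular, $D_h(q_1)\leq 0$. Now I would apply Corollary \ref{cor4.1} to the critical point $q_1$: since $D_h(q_1)\leq 0$, that corollary asserts that $q_1$ must be the \emph{unique} maximum point of $\log h(x)+4\pi\gamma(x)$. This contradicts the existence of the second maximum point $q_2\neq q_1$, and the desired conclusion follows.

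There is really no technical obstacle here; the entire content of the statement is already packaged inside Theorem \ref{thm4.1} and Corollary \ref{cor4.1}. The only point worth flagging is that one must invoke the necessary (not the sufficient) direction of Theorem \ref{thm4.1}, namely the implication established via Lemma \ref{lem4.1}: if some maximum point had $D_h\leq 0$ one would produce a blow-up sequence along $\rho_n\nearrow 8\pi$, contradicting the existence of a solution at $\rho=8\pi$ via Lemma \ref{lem100612.1}. This is precisely what allows the two-line contradiction above to go through without any additional analysis.
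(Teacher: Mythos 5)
Your argument is correct and uses exactly the same two ingredients as the paper, merely rearranged as a proof by contradiction: the paper argues directly that two distinct maximum points force $D_h(q_1)>0$ and $D_h(q_2)>0$ via (the contrapositive of) Corollary \ref{cor4.1}, and then invokes the sufficiency half of Theorem \ref{thm4.1} to produce a solution. One point in your closing commentary is backwards, although it does not affect the validity of the deduction itself: the implication you actually need, namely that the absence of a solution at $\rho=8\pi$ forces $D_h(q)\leq 0$ at every maximum point, is the contrapositive of the \emph{sufficient} direction of Theorem \ref{thm4.1} (the test-function expansion of Lemma \ref{lem25.06}, which shows that $D_h(q)>0$ at a maximum point yields a solution), not of the necessary direction proved via Lemma \ref{lem4.1}. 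Indeed, in your contradiction scenario there is no solution at $\rho=8\pi$ to begin with, so producing a blow-up sequence as in Lemma \ref{lem4.1} would contradict nothing; that lemma enters only indirectly, through the proof of Corollary \ref{cor4.1}.
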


\begin{proof}
Let $q_1\neq q_2$ be maximum points of $\log h(x)+4\pi\gamma(x)$.
We deduce from Corollary \ref{cor4.1} that $D_h(q_1)>0$ and $D_h(q_2)>0$. Hence
Theorem \ref{thm4.1} yields the existence of a solution for \eqref{eqn2.1}.
\end{proof}

Now we are in the position to prove:

\begin{theorem}\label{thm4.2}
Let $q$ be a critical point of $\,\log h(x)+4\pi\gamma(x)$ with
$D_h(q)\leq 0$. Then $q$ is a nondegenerate critical point.
\end{theorem}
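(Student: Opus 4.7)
The plan is to argue by contradiction using a carefully constructed harmonic perturbation of $h$. Suppose that $q$ is a degenerate critical point of $F:=\log h+4\pi\gamma$ with $D_h(q)\leq 0$. By Corollary \ref{cor4.1}, $q$ is the unique maximum of $F$, and by Lemma \ref{lem4.1} the minimizer branch $u_\rho$ of \eqref{eqn2.1} blows up at $q$ as $\rho\nearrow 8\pi$, so in particular \eqref{eqn2.1} has no solution at $\rho=8\pi$. Since $\log h$ is harmonic in $\Omega$ by \eqref{eqn2.2}, the Hessian $H:=\nabla^2 F(q)$ has trace $4\pi\Delta\gamma(q)\leq 0$; being negative semidefinite and singular, its eigenvalues are $(\mu,0)$ with $\mu=4\pi\Delta\gamma(q)\leq 0$.

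I would construct a family of perturbations as follows. Choose $\phi$ harmonic in a neighborhood of $\overline{\Omega^*}$ with $\phi(q)=0$, $\nabla\phi(q)=0$ and with prescribed trace-free Hessian $M:=\nabla^2\phi(q)$; concretely, extend to $\overline{\Omega^*}$ the harmonic quadratic polynomial $\frac{1}{2}(x-q)^T M(x-q)$ by solving a Dirichlet problem on a larger domain. Set $h_t(x):=h(x)\exp(t\phi(x))$: then $h_t$ still satisfies \eqref{eqn2.2}, $q$ remains a critical point of $F_t:=\log h_t+4\pi\gamma$, its Hessian there becomes $H+tM$, and by choosing $M$ so that it is strictly negative on the null eigendirection of $H$ the matrix $H+tM$ is negative definite for small $t>0$. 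Hence $q$ is a nondegenerate maximum of $F_t$ for such $t$.

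The technical core is to show that, for a suitable choice of $\phi$, the quantity $t\mapsto D_{h_t}(q)$ satisfies $D_{h_t}(q)>0$ for some arbitrarily small $t>0$. Differentiating \eqref{eqn4.5} at $t=0$ produces
\begin{equation}\label{dDdt}
\left.\frac{d}{dt}D_{h_t}(q)\right|_{t=0}=\lim_{\delta\to 0}\int_{\Omega\setminus B(q,\delta)}\frac{\phi(x)\,\frac{h(x)}{h(q)}e^{8\pi(\widetilde G(x,q)-\gamma(q))}}{|x-q|^4}\,dx,
\end{equation}
which is finite precisely because $M$ is trace-free (the same angular cancellation that makes $D_h(q)$ itself well defined). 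Exploiting the null eigendirection of $H$, one augments $\phi$ by additional harmonic terms (which do not alter the critical point structure at $q$) to control the size of \eqref{dDdt} and to make it arbitrarily large and positive; in particular $D_{h_t}(q)>0$ for $t>0$ small, even if $D_h(q)<0$. With $q$ a nondegenerate maximum of $F_t$ and $D_{h_t}(q)>0$, Theorem \ref{thm4.1} applied to $h_t$ yields a solution $u_t$ of \eqref{eqn2.1} at $\rho=8\pi$.

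It remains to reach a contradiction by letting $t\searrow 0$. Either (i) $\{u_t\}$ is bounded in $C^2(\overline{\Omega})$, and passing to a subsequential limit produces a solution of \eqref{eqn2.1} at $\rho=8\pi$ for $h$ itself, in direct contradiction with Lemma \ref{lem4.1}; or (ii) along some $t_n\to 0^+$ the family $u_{t_n}$ blows up, in which case Proposition \ref{prop4.1} applied to $h_{t_n}$ localizes the blow-up at the unique critical point of $F_{t_n}$ near $q$, and the asymptotic formula \eqref{eqn4.4} applied to $h_{t_n}$ at the fixed value $\rho=8\pi$ forces $D_{h_{t_n}}(q)\to 0$, contradicting the uniform lower bound $D_{h_{t_n}}(q)\geq c>0$ secured in the previous paragraph. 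The main obstacle is the positivity and size control of \eqref{dDdt}: identifying an explicit harmonic perturbation that couples favorably with the null direction of $H$ in the regularized integral, and doing so without disturbing the critical point structure at $q$, is the delicate heart of the argument.
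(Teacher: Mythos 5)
Your strategy runs in the opposite direction from the one that works, and the reversal creates gaps I do not see how to close. You perturb $h$ so that $q$ becomes a nondegenerate maximum with $D_{h_t}(q)>0$ and then invoke Theorem \ref{thm4.1} to produce a solution at $\rho=8\pi$. The first obstruction is quantitative: since $\phi(q)=0$, $\nabla\phi(q)=0$ and the quadratic part of $\phi$ is trace--free, one checks that $|D_{h_t}(q)-D_h(q)|\leq C\,t\,(1+\|\phi\|_{C^3(\overline{\Omega^*})})$; hence when $D_h(q)<0$ you can only reach $D_{h_t}(q)>0$ by taking $t\|\phi\|\gtrsim|D_h(q)|$, i.e.\ a perturbation of $\log h$ that does \emph{not} tend to zero. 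Then $h_t\not\to h$, $q$ need not remain the \emph{global} maximum of $\log h_t+4\pi\gamma$ (Theorem \ref{thm4.1} requires the global maximum, not a nondegenerate local one; and if the global maximizer migrates to some $q_t\neq q$ then $D_{h_t}(q_t)\to D_h(q)\leq 0$, destroying the hypothesis you need), and the final limit $t\searrow 0$ no longer recovers the original equation. A second obstruction is the fully degenerate case $\nabla^2(\log h+4\pi\gamma)(q)=0$: any admissible $M$ is trace--free, so $H+tM=tM$ is indefinite and can never be made negative definite; your construction cannot even begin there. Finally, \eqref{eqn4.4} is an asymptotic along a blow--up sequence for a \emph{fixed} weight, and using it at the fixed value $\rho=8\pi$ for the varying family $h_{t_n}$ requires a uniformity of the $o(1)$ term in $t_n$ that you have not established.

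The paper's proof turns the argument around and is much shorter. After rotating coordinates so that the Hessian of $\log h+4\pi\gamma$ at $q=0$ is $\mathrm{diag}(0,a)$ with $a\leq 0$, it perturbs by the harmonic quadratic $\varepsilon(x_1^2-x_2^2)$: this preserves \eqref{eqn2.2}, preserves the strict inequality $D_\varepsilon(q)<0$ (an open condition), but produces $\partial^2_{11}(\log h_\varepsilon+4\pi\gamma)(q)=2\varepsilon>0$, so $q$ is certainly \emph{not} a maximum of the perturbed function. This contradicts Corollary \ref{cor4.1}, which forces any critical point with $D\leq 0$ to be the unique maximum. The borderline case $D_h(q)=0$ is first pushed to $D<0$ by the quartic factor $e^{-tx_1^4}$, which leaves the Hessian untouched (at the price of re--justifying uniqueness since $h_t$ is no longer subharmonic, exactly as in Case 2 of Lemma \ref{lem4.1}). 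In short, the sign condition $D\leq 0$ is exploited only through the structural nonexistence result of Corollary \ref{cor4.1}; it is never flipped to $D>0$. That is the idea your proposal is missing.
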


\begin{proof}
Since $D_h(q)\leq 0$ we deduce from Corollary \ref{cor4.1} that $q$ is the
unique maximum point. We argue by contradiction and suppose that $q$ is degenerate.
Without loss of generality, we may assume that $q=0$ and
\beq\label{D2.1}
\left.\frac{\partial^2}{\partial x_1^2}(\log h+4\pi\gamma)\right|_{x=0} =
\left.\frac{\partial^2}{\partial x_1\partial x_2}(\log h+4\pi\gamma)\right|_{x=0}= 0,
\eeq
\beq\label{D2.2}
\left.\frac{\partial^2}{\partial x_2^2}(\log h+4\pi\gamma)\right|_{x=0} = a \leq 0.
\eeq
The proof will be divided in two cases.

\textbf{Case 1:} $D_h(q)<0$.\\
We set
\beq\label{D2.3}
\log h_\varepsilon(x) = \log h(x)+\varepsilon(x_1^2-x_2^2),
\eeq
and let $D_\varepsilon(q)$ be defined by \eqref{eqn4.5} where $h$ has
just been replaced by $h_\varepsilon$.
Then $D_\varepsilon(q)<0$ if $\varepsilon$ is sufficiently small.
Since $x_1^2-x_2^2$ is harmonic, then Corollary \ref{cor4.1} can be applied and we
conclude that $q=0$ should be a maximum point. This is impossible as one readily verifies
 by using \eqref{D2.1} and \eqref{D2.2} together with \eqref{D2.3}. Hence the desired conclusion in Case 1
is established.

\textbf{Case 2:} $D_h(q)=0$.\\
We set
$$
h_t(x)=h(x)e^{-t x_1^4},\ \ t>0.
$$
Thus $q=0$ is a critical point of $h_t(x)$ and

$$
D_{h,t}(0)=\lim_{\varepsilon\rightarrow 0}\int_{\Omega\setminus
B(0;\varepsilon)}
\frac{h(x)}{h(0)}\dfrac{e^{-t{x_1}^4}e^{8\pi(\tilde{G}(x,0)-\gamma(0))}-1}{|x|^4}
- \int_{\Omega^c}\frac{dx}{|x|^4}.
$$

\noindent Clearly, $D_{h,t}(0)<D_h(0)$ for $t>0$. Next we consider the mean field equation:
\begin{equation*}
\hspace{-3.2cm} (4.17)_t\ \ \ \
    \left\{
\begin{array}{ll}
    \Delta u^t+\rho\dfrac{h(x)e^{-t{x_1}^4}{e^u}^t}{\int_\Omega h(x)e^{-t{x_1}^4}{e^u}^t dx}=0 & \hbox{in} \;\Omega, \\
    u^t=0  & \hbox{on }\;\partial\Omega. \\
\end{array}
\right.
\end{equation*}

Although $h_t$ is not subharmonic, we can argue as in Case 2 of the proof of Lemma \ref{lem4.1} to
show that $(4.17)_t$ admits at most one solution
for $\rho\in[0,8\pi]$ and for any $t$ small enough. For a fixed small
$t>0$, $D_{h,t}(0)<0$ and then the conclusion obtained in Case 1 above says that
$q=0$ is a nondegenerate critical point of $\log h(x)+4\pi\gamma(x)-t{x_1}^4$. Hence, it should be
a nondegenerate critical point of $\log h(x)+4\pi\gamma(x)$ as well, which is the desired contradiction in
Case 2.
\end{proof}

Finally we have the following generalized version of Corollary \ref{cor25.06}
\begin{corollary}\label{cor25.06-gen} Let $\Omega\subset \R^2$ be an open, bounded and
multiply-connected domain of class $C^{1}$. Then
$$
\frac{1}{8\pi}\inf_{u\in H^1_0(\O)} I_\rho(u) \leq-1-\log(\pi)-
\sup_{x\in\overline{\Omega}}\left(\log{h(x)}+4\pi\gamma(x)\right),
$$
and \eqref{eqn2.1} admits a solution at $\rho=8\pi$ if and only if the strict inequality holds.
\end{corollary}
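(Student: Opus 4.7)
The plan is to read the corollary off from three ingredients already available in the paper: the asymptotic expansion of $I_{8\pi}(v_\varepsilon)$ in Lemma \ref{lem25.06}, the existence dichotomy in Theorem \ref{thm4.1}, and the sharp identification of $\inf I_{8\pi}$ in the blow-up regime recalled in Remark \ref{rem:qmax} (together with Lemma \ref{lem100612.1}).

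First, to obtain the non-strict inequality I would pick a point $q$ where $\log h+4\pi\gamma$ attains its supremum on $\ov{\O}$; since $\gamma(x)\to-\infty$ as $x\to\p\O$ this $q$ is automatically interior. After a translation making $q=0$, I plug the family $\{v_\varepsilon\}\subset H^1_0(\O)$ built just before Lemma \ref{lem25.06} into $I_{8\pi}$ and read off from that lemma
\begin{equation*}
I_{8\pi}(v_\varepsilon)=-8\pi\Bigl(1+\log\pi+\sup_{\ov{\O}}(\log h+4\pi\gamma)\Bigr)-8\,D_h(q)\,\varepsilon^{2}+O(\varepsilon^{3}).
\end{equation*}
Sending $\varepsilon\to 0^+$ and dividing by $8\pi$ yields the inequality in the corollary, irrespective of the sign of $D_h(q)$.

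For the biconditional I would argue in two contrapositive directions. If \eqref{eqn2.1} admits a solution at $\rho=8\pi$, Theorem \ref{thm4.1} produces a maximum point $q$ of $\log h+4\pi\gamma$ with $D_h(q)>0$. Using this $q$ in the expansion above makes $-8D_h(q)\varepsilon^{2}+O(\varepsilon^{3})$ strictly negative for all small $\varepsilon>0$, whence $\inf I_{8\pi}\leq I_{8\pi}(v_\varepsilon)$ is strictly less than the right-hand side. Conversely, if no solution exists at $\rho=8\pi$, Lemma \ref{lem100612.1} forces the branch of minimizers $u_\rho$ to blow up as $\rho\nearrow 8\pi$, and Remark \ref{rem:qmax} then gives the sharp equality
\begin{equation*}
\tfrac{1}{8\pi}\inf_{H^1_0(\O)}I_{8\pi}=-1-\log\pi-\sup_{\ov{\O}}(\log h+4\pi\gamma),
\end{equation*}
so the strict inequality cannot occur unless a solution is present.

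The hard analytic work has already been done upstream: the refined expansion of Lemma \ref{lem25.06} drives the first two steps, while the equality in the blow-up case rests on the uniqueness of the blow-up point (Proposition \ref{prop4.1}, itself requiring Theorem \ref{thm3.1} and hence the multiply-connected Bol's inequality of Theorem \ref{Thm2.1}) together with standard concentration-compactness asymptotics for Liouville-type equations. The present corollary is essentially a bookkeeping statement assembling those results, and the only subtle point is invoking Remark \ref{rem:qmax} in Step 3, where multiple connectedness is not an obstruction precisely because the uniqueness of the blow-up point has been secured.
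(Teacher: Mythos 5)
Your proposal is correct and follows essentially the same route as the paper: the paper's own proof combines Theorem \ref{thm4.1} with Lemma \ref{lem25.06} for the ``solution exists $\Rightarrow$ strict inequality'' direction, and contradicts \eqref{eqn4.2} (via Lemma \ref{lem100612.1} and Remark \ref{rem:qmax}) for the converse. Your explicit derivation of the non-strict inequality by letting $\varepsilon\to 0^+$ in the test-function expansion is a minor addition the paper leaves implicit (there it follows from the two-case dichotomy), but the substance is identical.
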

\begin{proof}
Theorem \ref{thm4.1} says that a solution at $\rho=8\pi$ exists if and only if $D_h(q)>0$
for a maximum point of $\log{h(x)}+4\pi\gamma(x)$. Hence Lemma \ref{lem25.06} shows immediately that if a
solution exists, then the inequality is strict.\\
On the other side, if we assume by contradiction that the inequality is strict but no solution exists at
$\rho=8\pi$, then we get a contradiction to \eqref{eqn4.2}.
\end{proof}

\section{Equivalence of Statistical ensembles.}\label{sec5}
Our main concern in this section is the applicability of Theorem \ref{Thm1.2}
to some long standing open problems in the statistical mechanics analysis of two dimensional turbulence
\cite{kof03}.
This is why in some statements we will assume the domain's regularity taken up in Theorem \ref{Thm1.2},
see Theorems \ref{Thm5.3} and \ref{Equiv26.06} below. Let
$$
\s(t)=\graf{-t\log{t},\quad &t>0\\ 0,\quad &t=0, \ }
$$
and $\om\subset\rdue$ be a bounded domain. We define
$$
\mathcal{P}_{\scp \om}=\left\{\rho\in L^{1}(\om)\,|\,\rho\geq 0\;\mbox{a.e. in}\;\om,\;\ino\rho =1,
\ino(-\s(\rho))<+\infty \right\},
$$
and let $G(x,y)$ be the Green's function on $\om$ as defined in the introduction.
For any $\rho\in \mathcal{P}_{\scp \om}$ let us set
$$
\mathcal{S}(\rho)=\ino\s(\rho),\quad
\mathcal{E}(\rho)=\frac12 \ino \rho G[\rho],
$$
where
$$
G[\rho](x)=\ino G(x,y)\rho(y)\,dy,
$$
and
$$
\mathcal{F}_\be(\rho)=-\frac{1}{\be}\mathcal{S}(\rho)+\mathcal{E}(\rho).
$$

For any  $E\in\erre$ we consider the Microcanonical Variational Principle (MVP for short)
\beq\label{mvp}
S(E)=\sup\{\mathcal{S}(\rho),\; \rho \in \mathcal{P}_{\scp \om}(E) \},\quad
\mathcal{P}_{\scp \om}(E)=\{\rho\in\mathcal{P}_{\scp \om}\,|\,\mathcal{E}(\rho)=E\},
\eeq

\noindent
while for any  $-8\pi \leq \be<0$ we consider the Canonical Variational Principle (CVP for short)
\beq\label{cvp}
f(\be;\om)=\sup\{\mathcal{F}_\be(\rho),\; \rho \in \mathcal{P}_{\scp \om} \}.
\eeq

For each $0<\lm\leq 8\pi$ let us set
$$
g_\lm(\om):=\sup\limits_{u\in H^{1}_0(\om)} \mathcal{J}_\lm(u),\quad \mathcal{J}_\lm(u)=-\frac{1}{2}\ino |\nabla u|^2+\frac{1}{\lm}
\log\left(\ino e^{\lm u}\right).
$$

The following results are well known. Although not essentials for the incoming discussion,
they are quite relevant to understand how $f(\be;\om)$ (the physical free energy) is related
with $g_\lm(\om)$ (which is essentially our functional $J_\rho$). Because of this
basic role, we will provide a sketchy proof for the sake of completeness.
However we will not discuss the case where $\be>0$ (that is $\lm<0$) which is much easier, see \cite{kof03}.

\begin{theorem}\label{thm5.1} Let $\om\subset \rdue$ be an open and bounded domain satisfying a uniform cone property
\cite{Adams}. For any $-8\pi<\be<0$ and for
any $0< \lm<8\pi$ the supremums $f(\be;\om)$ and $g_\lm(\om)$ are
attained and in particular, setting $\lm=-\be$,
$$
f(\be;\om)=g_\lm(\om).
$$
\end{theorem}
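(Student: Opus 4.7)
The plan is to establish $f(\be;\om)=g_\lm(\om)$ via classical convex duality between $\mathcal{F}_\be$ and $\mathcal{J}_\lm$, with Gibbs' inequality as the bridge: for every $\rho\in\mathcal{P}_{\scp\om}$ and every $u\in H^1_0(\om)$ for which both sides are defined, the nonnegativity of the Kullback--Leibler divergence between $\rho$ and the Gibbs density $\rho_u:=e^{\lm u}/\ino e^{\lm u}$ will yield
$$
\frac{1}{\lm}\log\ino e^{\lm u}\;\geq\;\ino\rho u+\frac{1}{\lm}\mathcal{S}(\rho),
$$
with equality iff $\rho=\rho_u$. Since $\lm=-\be$ the entropic coefficient equals $-\frac{1}{\be}$, so this inequality is precisely what couples the two variational principles.

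To prove $g_\lm(\om)\geq f(\be;\om)$ I would insert $u=G[\rho]$ in the display above and use $\ino\rho G[\rho]=\ino|\nabla G[\rho]|^2=2\mathcal{E}(\rho)$ to obtain
$$
\mathcal{J}_\lm(G[\rho])\;\geq\;-\mathcal{E}(\rho)+2\mathcal{E}(\rho)+\frac{1}{\lm}\mathcal{S}(\rho)\;=\;\mathcal{F}_\be(\rho).
$$
For the reverse direction, given $u\in H^1_0(\om)$ I would set $\rho_u$ as above so that Gibbs' inequality is an equality, introduce $v:=G[\rho_u]$, and complete the square in the Dirichlet form to get
$$
\mathcal{J}_\lm(u)\;=\;\mathcal{F}_\be(\rho_u)\,-\,\tfrac12\ino|\nabla(u-v)|^2\;\leq\;\mathcal{F}_\be(\rho_u)\;\leq\;f(\be;\om).
$$
Taking suprema over $\rho$ and $u$ respectively yields $f(\be;\om)=g_\lm(\om)$.

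For attainment, when $\lm<8\pi$ the Moser--Trudinger inequality combined with the cone property of $\om$ (which provides the compact embedding $H^1_0(\om)\hookrightarrow L^p(\om)$ for every $p<+\infty$) shows that $-\mathcal{J}_\lm$ is bounded below, coercive and weakly lower semicontinuous on $H^1_0(\om)$; the direct method then produces a maximizer $u^*$ of $\mathcal{J}_\lm$, so $g_\lm(\om)$ is attained. The Euler--Lagrange equation at $u^*$ reads $-\D u^*=\rho^*$ with $\rho^*:=e^{\lm u^*}/\ino e^{\lm u^*}$, whence $u^*=G[\rho^*]$; plugging this into the second inequality above gives $v=u^*$, the completed square vanishes, and therefore $\mathcal{F}_\be(\rho^*)=\mathcal{J}_\lm(u^*)=g_\lm(\om)=f(\be;\om)$, so $\rho^*$ realizes $f(\be;\om)$.

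The main obstacle I expect is not conceptual but a matter of bookkeeping: one has to verify that Gibbs' inequality can legitimately be applied in the relevant regimes, i.e.\ that for the $\rho\in\mathcal{P}_{\scp\om}$ which contribute to the supremum (those with $\mathcal{E}(\rho)<+\infty$) one has $G[\rho]\in H^1_0(\om)$ and $e^{\lm G[\rho]}\in L^1(\om)$, and similarly that $\rho_u\log\rho_u\in L^1(\om)$ for $u\in H^1_0(\om)$. All these are consequences of the Moser--Trudinger inequality and standard elliptic regularity under the uniform cone condition; no deeper ingredient of the paper (in particular not the uniqueness theorem just proved) enters the argument.
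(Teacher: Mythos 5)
Your argument is correct, but it follows a genuinely different route from the paper's. The paper proves the identity by first invoking two attainment results as black boxes -- Theorem 2.1 of Caglioti--Lions--Marchioro--Pulvirenti for the maximizer $\rho_{\scp\be}$ of $f(\be;\om)$ (a nontrivial thermodynamic-limit/log-HLS fact) and Moser--Trudinger plus the direct method for $g_\lm(\om)$ -- and then checks that each maximizer, transported through the correspondence $\rho\mapsto G[\rho]$ resp.\ $u\mapsto e^{\lm u}/\ino e^{\lm u}$ at the Euler--Lagrange level, is an admissible competitor for the other problem with the same value. You instead prove the two one-sided inequalities for \emph{arbitrary} competitors: Gibbs' inequality with $u=G[\rho]$ gives $\mathcal{F}_\be(\rho)\leq \mathcal{J}_\lm(G[\rho])$, and completing the square in the Dirichlet form gives the exact defect identity $\mathcal{J}_\lm(u)=\mathcal{F}_\be(\rho_u)-\frac12\ino|\nabla(u-G[\rho_u])|^2$. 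This buys you two things the paper's proof does not: the attainment of $f(\be;\om)$ comes for free from the maximizer of $g_\lm(\om)$ (so the citation of \cite{kof02} becomes unnecessary), and the equality $f=g$ would survive even in regimes where neither supremum is attained. The one point you should not dismiss as pure bookkeeping is the class of $\rho\in\mathcal{P}_{\scp\om}$ with finite entropy but $\mathcal{E}(\rho)=+\infty$: for such $\rho$ one has $\mathcal{F}_\be(\rho)=+\infty$ and $G[\rho]\notin H^1_0(\om)$, so your first inequality is vacuous there and the supremum could a priori be $+\infty$. Ruling this out requires either the logarithmic Hardy--Littlewood--Sobolev inequality (the dual form of Moser--Trudinger, which is the route taken in \cite{CSW}) or a truncation argument ($\rho_n=\min(\rho,n)/\ino\min(\rho,n)$ plus monotone convergence of the energies); it does not follow from the $H^1_0$ Moser--Trudinger inequality alone. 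Also, minor point: the compact embedding $H^1_0(\om)\hookrightarrow L^p(\om)$ and the Moser--Trudinger inequality on $H^1_0(\om)$ hold for any bounded open set by zero extension; the uniform cone property is only relevant to the $H^1(\om)$ versions and to the renormalized free-energy argument of \cite{kof02}, which your proof bypasses.
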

\proof
By using Theorem 2.1 in \cite{kof02} we see that for any $-8\pi<\be<0$ the supremum
$f(\be;\om)$ is attained by a density
$\rho_{\scp \be}\in \mathcal{P}$ which solves the MFE (Mean Field Equation)
$$
\rho_{\scp \be}(x)=\frac{e^{-\be G[{\dsp \rho}_{\scp \be}](x)}}{\ino e^{-\be G[{\dsp \rho}_{\scp \be}]}},
\qquad\qquad Q(\be,\om).
$$

Actually the argument in \cite{kof02} relies on the evaluation of the thermodynamic limit for a renormalized
free energy functional. It turns out that another proof of this fact which uses variational type arguments based on the
logarithmic Hardy-Sobolev inequality \cite{lieb} can be found in Lemmas 2.1 and 2.2 in \cite{CSW} under stronger
regularity assumptions on $\om$.\\
The Moser-Trudinger inequality \cite{moser} and the direct method in the calculus of variations show that
for any $0< \lm<8\pi$ the supremum $g_\lm(\om)$ is attained by a function $v=v_{\scp \lm}\in H^1_0(\om)$
which solves the MFE
$$
\graf{
-\Delta v =\dfrac{e^{\lm v}}{\ino e^{\lm v}} & \mbox{in}\quad \om\\
v =0 & \mbox{on}\quad \pa\om
}\qquad\qquad P(\lm,\om).
$$

Let  $\rho_{\scp \be}$ be a maximizer for $\be>-8\pi$, put $\be=-\lm$ and $v_\lm=G[\rho_{\scp -\lm}]$.
Then we see that $v_\lm$ is in $H^{1}_0(\om)$ and solves $P(\lm,\om)$. Hence, by a straightforward evaluation,
we obtain
$$
f(-\lm;\om)=\mathcal{F}_{-\lm}(\rho_{\scp -\lm})=\mathcal{J}_{\lm}(v_\lm).
$$
Clearly, the same equality, to be read in the opposite direction, shows that if $v=v_\lm$ is a maximizer
for $\lm<8\pi$ and we define $\rho_{\scp -\lm}=\frac{e^{\lm {\dsp v}_\lm}}{\ino e^{\lm {\dsp v}_\lm}}$, then
it clearly solves $Q(-\lm,\om)$ and in particular
$$
g_\lm(\om)=\mathcal{F}_{-\lm}(\rho_{\scp -\lm}).
$$
We easily deduce at this point that $g_\lm(\om)$ and $f(-\lm;\om)$ must coincide.\fineproof

\bigskip

\begin{remark}\label{MFE-equiv}
The proof above shows in particular that $\rho_{\scp \be}$ solves $Q(\be,\om)$ if and only if, setting $\lm=-\be$,
then $v_\lm=G[\rho_{\scp-\lm}]$ belongs to $H^1_0(\om)$ and weakly solves $P(\lm,\om)$.
Clearly $P(\lm,\om)$ is equivalent to problem \eqref{eqn1.1} as far as $\lm\neq 0$.
\end{remark}

\bigskip

\begin{remark} Since $\mathcal{S}$ is concave, if $\rho\in\mathcal{P}_{\scp \om}$, by the Jensen's inequality we have
$$
0=\s\left(\ino \rho\right)\geq
\ino\s(\rho)=\mathcal{S}(\rho),
$$
that is, $\mathcal{S}(\rho)\leq 0$, $\forall \rho\in\mathcal{P}_{\scp \om}$.
\end{remark}

\begin{theorem}\label{Thm5.2}  Let $\om\subset \rdue$ be an open and bounded domain. For any $E>0$,
$S(E)<+\infty$ and there exists $\rho^{(E)}\in \mathcal{P}_{\scp \om}(E)$ such that
$S(E)=\mathcal{S}(\rho^{(E)})$. In particular $S(E)$ is continuous, and letting $E_0=\mathcal{E}\left(\frac{1}{|\om|}\right)$,
then $S(E_0)=0$ and $S$ is strictly increasing for $E<E_0$ and strictly decreasing for $E>E_0$.
Moreover for each $E$ there exists $\be\in\erre$ such that $\rho^{(E)}$ solves $Q(\be,\om)$.
\end{theorem}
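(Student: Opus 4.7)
My plan is to apply the direct method of the calculus of variations to establish the existence of a maximizer for the MVP, to identify it with a solution of $Q(\be,\om)$ via a Lagrange multiplier argument, and finally to obtain the monotonicity and continuity of $S$ from the strict concavity of $\mathcal{S}$ coupled with a comparison against the uniform density $\rho_0 = 1/|\om|$. Finiteness $S(E)<+\infty$ is immediate from the a priori upper bound on $\mathcal{S}$ noted in the remark preceding the statement.

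For attainment I take a maximizing sequence $\{\rho_n\} \subset \mathcal{P}_{\scp \om}(E)$ with $\mathcal{S}(\rho_n) \to S(E)$. The resulting bound $\ino \rho_n\log\rho_n \leq C$ combined with the superlinear growth of $t\log t$ yields uniform integrability via the de la Vall\'ee--Poussin criterion, so by Dunford--Pettis a subsequence satisfies $\rho_n \rightharpoonup \rho^{(E)}$ weakly in $L^1(\om)$, with $\rho^{(E)} \geq 0$ and $\ino \rho^{(E)} = 1$. Convexity and lower semicontinuity of $\rho \mapsto \ino \rho\log\rho$ on weak $L^1$ yield $\mathcal{S}(\rho^{(E)}) \geq \limsup_n \mathcal{S}(\rho_n) = S(E)$. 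To pass to the limit in the quadratic energy $\mathcal{E}$ I use the smoothing of the Green operator: since $G(x,y)$ has only a logarithmic singularity, $G$ sends $L^1$-bounded sets compactly into $L^q(\om)$ for $q$ large, so $G[\rho_n] \to G[\rho^{(E)}]$ strongly; combining this with the $L\log L$ bound on $\{\rho_n\}$ coming from the entropy estimate, I conclude $\mathcal{E}(\rho_n) \to \mathcal{E}(\rho^{(E)}) = E$. Hence $\rho^{(E)} \in \mathcal{P}_{\scp \om}(E)$ realizes $S(E)$. This continuous passage to the limit in $\mathcal{E}$---as opposed to mere lower semicontinuity, which would not preserve the equality constraint---is the main technical hurdle.

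For the Euler--Lagrange characterization I first note that $\s'(t)\to+\infty$ as $t\to 0^+$ forces any maximizer to be strictly positive a.e., so the positivity constraint is inactive. Varying $\rho^{(E)}$ along $\rho^{(E)} + \ve\phi$ with $\phi \in L^\infty(\om)$ satisfying the linearized constraints $\ino\phi = 0$ and (by symmetry of $G$) $\ino\phi\,G[\rho^{(E)}] = 0$, the vanishing of the first variation yields $\ino\phi\,(-1-\log\rho^{(E)})\,dx = 0$ for every such $\phi$. The fundamental lemma then produces constants $\alpha,\be\in\erre$ with $-1-\log\rho^{(E)} = \alpha + \be\,G[\rho^{(E)}]$ a.e., and normalization by $\ino\rho^{(E)} = 1$ eliminates $\alpha$ and leaves $\rho^{(E)} = e^{-\be G[\rho^{(E)}]}/\ino e^{-\be G[\rho^{(E)}]}$, which is precisely $Q(\be,\om)$.

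It remains to establish the qualitative properties of $S$. The density $\rho_0 = 1/|\om|$ is the unique unconstrained maximizer of $\mathcal{S}$ on $\mathcal{P}_{\scp \om}$ by strict concavity of $\s$, and $\mathcal{E}(\rho_0) = E_0$, so $S(E_0) = \mathcal{S}(\rho_0)$. For strict monotonicity on $\{E < E_0\}$ (the half-line $\{E > E_0\}$ being analogous), I fix $E_1 < E_0$ and consider the path $\rho_t = (1-t)\rho^{(E_1)} + t\rho_0$, $t \in [0,1]$: strict concavity of $\mathcal{S}$ gives $\mathcal{S}(\rho_t) > (1-t) S(E_1) + t\mathcal{S}(\rho_0) > S(E_1)$ for $t \in (0,1]$, and since $t\mapsto\mathcal{E}(\rho_t)$ is continuous with endpoint values $E_1$ and $E_0$, the intermediate value theorem ensures that every $E \in (E_1, E_0]$ is attained by some $\rho_{t(E)}$, forcing $S(E) \geq \mathcal{S}(\rho_{t(E)}) > S(E_1)$. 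Continuity of $S$ on $(0,+\infty)$ then follows: upper semicontinuity by applying the compactness argument of the second paragraph to maximizers $\{\rho^{(E_n)}\}$ along $E_n \to E$, and lower semicontinuity by an interpolation argument in the spirit of the monotonicity proof, producing admissible competitors at neighboring energies with entropy arbitrarily close to $S(E)$.
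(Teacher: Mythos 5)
The paper offers no proof of this statement at all---it simply defers to Propositions 2.1--2.4 of \cite{kof03}---and your argument is a correct reconstruction of precisely that material along the standard route: weak-$L^1$ compactness from the entropy bound via Dunford--Pettis, continuity (not mere semicontinuity) of $\mathcal{E}$ along the maximizing sequence via the $L\log L$/exponential-class duality for the logarithmic potential, the Lagrange-multiplier identification of the maximizer with a solution of $Q(\be,\om)$, and interpolation against the uniform density for the monotonicity and continuity of $S$. The only places deserving extra care are the Euler--Lagrange step, where the two-sided perturbations $\rho^{(E)}+\ve\phi$ must be kept inside $\mathcal{P}_{\scp\om}$ (so one needs positivity a.e.\ of the maximizer and, say, variations supported where $\rho^{(E)}$ is bounded away from zero before passing to the limit, as in Proposition 2.3 of \cite{kof03}), and the identity $S(E_0)=0$, which requires the normalization $|\om|=1$ implicitly assumed in the paper and in the cited reference.
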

\proof This is the content of Propositions 2.1, 2.2, 2.3 and 2.4 in \cite{kof03}.\fineproof

\bigskip

Theorem \ref{Thm5.2} shows that the MVP always has a solution (that is, the supremum in \eqref{mvp}
is always attained) which consequently describes the (mean field) thermodynamic of the system.
This is no longer true for the CVP which surely won't
have a solution if $\be<-8\pi$ (that is, the supremum in \eqref{cvp} is not attained).
The characterization of those cases where these two formulations
yield the same thermodynamics (\underline{equivalence} of microcanonical and canonical statistical ensambles)
is one of the main aim in the statistical mechanics description of the system. The following
results are concerned with the solution of this problem.\\

Let us recall that $P(\lm,\om)$ has been introduced during the proof Theorem \ref{thm5.1} to denote
one of the equivalent formulations of the mean field equation, see Remark \ref{MFE-equiv}.\\
In \cite{kof03}, a bounded domain is said to be:\\
\noindent
(-) of {\bf first kind}, if solutions of $P(\lm,\om)$ blow up as $\lm\nearrow 8\pi^{-}$;\\
(-) of {\bf second kind}, otherwise.\\
Among many other things, it was shown in \cite{kof03} that if the inequality is strict in Corollary
\ref{cor25.06} then $\Omega$ is of second kind. On this basis some examples of simply-connected domains
of second kind were exhibited there, but a full characterization was still missing. This problem was
solved in \cite{key02}, where Chang, Chen and the second author characterized domains of the first/second
kind in case $\om$ is simply-connected. More recently this result has been extended in \cite{BL2} to cover
the case where Dirac data are included in \eqref{eqn2.1}.
We complete those results here with a full characterization of domains of
first/second kind. In particular the following Theorem
has to be complemented with Theorem \ref{Thm1.1} and Corollary \ref{cor25.06}
(and Theorems 1.1 and 1.5 in \cite{key02}) which provide other necessary and
sufficient conditions for a fixed domain to be of first or second kind.
For example, it has been already observed in the introduction that
the annulus $B(0,1)\setminus B(x_0,\epsilon)$ with $\varepsilon<1-|x_0|$ is of second kind if
$x_0=0$ while if $x_0\neq 0$ and $\varepsilon$ is small enough then it is of first kind.

\begin{theorem}\label{Thm5.3}  Let $\om$ be an open, bounded domain of class $C^1$.  The following facts are equivalent:\\
(-) $\om$ is of first kind;\\
(-) $g_{8\pi}(\om)$ is not attained;\\
(-) $P(8\pi,\om)$ has no solution;\\
(-) The unique branch of maximizers for $g_{\lm}(\om)$, $\lm<8\pi$ blows up as $\lm\nearrow 8\pi$.\\
\noindent Moreover, the following facts are equivalent:\\
(-) $\om$ is of second kind;\\
(-) $g_{8\pi}(\om)$ is attained;\\
(-) $P(8\pi,\om)$ admits a solution $u_{8\pi}$;\\
(-) The unique branch of maximizers $u_\lm$ for $g_{\lm}(\om)$, $\lm<8\pi$ converges uniformly to $u_{8\pi}$
as $\lm\nearrow 8\pi$.
\end{theorem}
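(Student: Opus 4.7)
\medskip
\noindent\textbf{Proof plan for Theorem \ref{Thm5.3}.}

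My plan is to reduce the eight statements to a small collection of well-understood dichotomies, using Theorem \ref{Thm1.2} (uniqueness on multiply connected domains, which of course also holds on simply connected ones) together with Lemma \ref{lem100612.1}, which already packages most of the needed equivalences in terms of the dual functional $I_\rho$.

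First I would make the dictionary between the two variational formulations explicit. By the scaling $v=u/\lambda$, a solution $u$ of \eqref{eqn1.1} with $\rho=\lambda$ produces a solution $v$ of $P(\lambda,\Omega)$, and conversely; in particular \eqref{eqn1.1} has a solution at $\rho=8\pi$ if and only if $P(8\pi,\Omega)$ does, and Theorem \ref{Thm1.2} yields uniqueness of the solution to $P(\lambda,\Omega)$ for every $\lambda\in(0,8\pi]$ at which it is solvable. Moreover a direct computation shows $\mathcal{J}_\lambda(u)=-\lambda^{-2}J_\lambda(\lambda u)$, so critical points of $\mathcal{J}_\lambda$ correspond exactly to critical points of $J_\lambda$, and the supremum $g_\lambda(\Omega)$ is attained precisely when the infimum $\mathcal{I}_{8\pi}(\Omega)$ (or rather its analogue $\inf J_\lambda$) is attained.

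With that dictionary in hand I would argue as follows. By the Moser--Trudinger inequality and the direct method, for every $\lambda<8\pi$ the supremum $g_\lambda(\Omega)$ is attained by a maximizer $u_\lambda$, which by uniqueness (Theorem \ref{Thm1.2}) is the unique solution of $P(\lambda,\Omega)$; hence there is a single, well-defined branch $\{u_\lambda\}_{\lambda<8\pi}$. Lemma \ref{lem100612.1}, translated via the dictionary above, asserts the equivalence of the four statements:
\emph{(i)} $u_\lambda$ converges in $C^2(\overline{\Omega})$ as $\lambda\nearrow 8\pi$;
\emph{(ii)} some subsequence $u_{\lambda_n}$ converges in $C^2(\overline{\Omega})$;
\emph{(iii)} $P(8\pi,\Omega)$ admits a solution;
\emph{(iv)} $g_{8\pi}(\Omega)$ is attained.
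This already gives all four equivalences within the ``second kind'' list, since $C^2$-convergence of the full branch along $\lambda\nearrow 8\pi$ forces the limit to be a (hence the unique) solution $u_{8\pi}$ at $\rho=8\pi$, and conversely once $u_{8\pi}$ exists Lemma \ref{lem100612.1} forces the whole branch to converge to it in $C^2(\overline{\Omega})$, a fortiori uniformly.

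For the ``first kind'' column I would simply take contrapositives. If $\Omega$ is of first kind, then by definition some sequence of solutions of $P(\lambda,\Omega)$ blows up as $\lambda\nearrow 8\pi^-$; by uniqueness this sequence must lie on the unique branch $u_\lambda$, so $u_\lambda$ itself cannot converge in $C^2(\overline{\Omega})$, and Lemma \ref{lem100612.1} then rules out (iii) and (iv), giving the remaining three properties. Conversely, if $P(8\pi,\Omega)$ has no solution, the failure of \emph{(ii)} in Lemma \ref{lem100612.1} together with standard concentration--compactness for Liouville-type equations (already invoked in the proof of Proposition \ref{prop4.1}) forces $\max_{\overline{\Omega}} u_\lambda\to+\infty$, i.e.\ the branch blows up, so $\Omega$ is of first kind. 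The main subtlety, and the only place where real content beyond Lemma \ref{lem100612.1} enters, is precisely this last step: I must invoke the uniqueness result of Theorem \ref{Thm1.2} to identify the a priori ``some'' blowing-up sequence in the definition of ``first kind'' with the canonical branch $u_\lambda$, since without uniqueness one could in principle have coexistence of a blowing-up family and a bounded one, which would break the clean first-kind/second-kind dichotomy.
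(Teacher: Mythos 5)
Your proposal is correct and follows essentially the same route as the paper, whose entire proof is the one-line invocation of Theorem \ref{Thm1.2} together with the implicit function theorem, arguing as in Proposition 6.1 of \cite{key02} (i.e.\ Lemma \ref{lem100612.1}). The details you supply --- the scaling dictionary between $P(\lm,\om)$ and \eqref{eqn1.1}, and the use of uniqueness to identify the a priori blowing-up family in the definition of ``first kind'' with the canonical branch of maximizers --- are exactly the content the paper leaves implicit.
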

\proof By using Theorem \ref{Thm1.2} and the implicit function Theorem the proof can be worked out as in
Proposition 6.1 in \cite{key02}.\fineproof

\begin{remark}
We remark that if $\om$ is simply-connected then Theorem \ref{Thm5.3} holds even if $\om$ has a finite number of
conical-type singular points, see \cite{key02} for further details and a discussion
of the first/second kind issue for some natural domains such as rectangles and polygons.
\end{remark}

By using the uniqueness of solutions for $P(\lm,\om)$ with $\lm<8\pi$ obtained in \cite{key20}, and
 under certain further assumptions on the topology of $\om$, in \cite{kof03}
the authors were able to establish the \un{equivalence} of statistical ensembles, namely the above mentioned
equivalence of the variational principles \eqref{mvp} and \eqref{cvp}. Indeed, since
the uniqueness in \cite{key20} was obtained just for simply-connected (smooth and bounded) domains, they restrict
their attention to this class. By using Theorem \ref{Thm1.2} we are able to extend
those results to the general case of bounded domains of class $C^1$. As a matter of fact, the proof
adopted in \cite{kof03} works fine as well, the unique modification being just that of using
Theorem \ref{Thm1.2} instead of the Suzuki's \cite{key20} uniqueness result.
This is why we will not repeat those proofs in full details here.
We remark that in \cite{key02} and \cite{BL1} the full uniqueness theory
presented here was developed under much weaker smoothness assumptions on $\om$. Let
$$
E=E(\be)=\mathcal{E}(\rho_{\scp \be}),
$$
be the energy of the (unique, see Remark \ref{MFE-equiv} and Theorem \ref{Thm1.2})
solution of $P(\lm,\om)$ where $\lm=-\be$ for $0<\lm<8\pi$. Hence $E:(-8\pi,0)\mapsto \R^+$ is well defined.
As in \cite{kof03},
if $\om$ is of first kind we set $E_c=E(-8\pi)=+\infty$, while if it is of second kind we set
$E_c=E(-8\pi)<+\infty$. Once more, as already mentioned above, we will not discuss the situation where
$\be\geq 0$ which is easier, see \cite{kof03}.\\
We finally have the generalization of Proposition 3.3 in \cite{kof03} to
the case where $\om$ is an open, bounded and multiply connected domain of class $C^1$.

\begin{theorem}\label{Equiv26.06}  Let $\om$ be an open and bounded domain of class $C^1$.
We assume that either $\om$ is of first kind and
$E\in (0,+\infty)$ or $\om$ is of second kind and $E\in(0,E_c)$. Then we have:\\
(i) $F(\be)=-\be f(\be;\om)$ is defined for $\be\geq -8\pi$, strictly convex and decreasing;\\
(ii) $F$ is differentiable for $\be>-8\pi$ and $E(\be)=-F^{'}(\be)=\frac12 \ino \rho_{\scp \be}G[\rho_{\scp \be}]$, where
$\rho_{\scp \be}$ solves $Q(\be,\om)$. In particular $E(\be)$ is a continuous and strictly monotone decreasing bijection;\\
(iii) $S(E)=\inf\limits_{\be}\{F(\be)+\be E(\be)\}$ and hence is a smooth and concave function of $E$;\\
(iv) If $\rho^{(E)}$ is a maximizer for \eqref{mvp} then $\rho_{\scp \be}=\rho^{(E(\be))}$. In particular
$\rho^{(E)}$ solves $P(-\be;\om)$ (\un{equivalence of statistical ensembles for $E<E_c$}) and the solution is unique.
\end{theorem}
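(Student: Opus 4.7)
The plan is to adapt the argument of Proposition 3.3 in \cite{kof03} essentially verbatim, substituting our Theorem \ref{Thm1.2} for Suzuki's simply-connected uniqueness \cite{key20} wherever the latter was used, and filling in a few extra details to accommodate the boundary case $\beta=-8\pi$ in the second-kind setting. For (i), I would first rewrite
$$
F(\beta)=-\beta f(\beta;\om)=\sup_{\rho\in\mathcal{P}_{\scp \om}}\bigl\{\mathcal{S}(\rho)-\beta\mathcal{E}(\rho)\bigr\},
$$
which holds because $-\beta>0$ on the range in question, so that multiplication by $-\beta$ commutes with the supremum defining $f$. This expresses $F$ as the supremum of a family of affine functions of $\beta$ and yields convexity for free; the fact that $F$ is strictly decreasing comes from $\mathcal{E}(\rho)>0$ for any $\rho\in\mathcal{P}_{\scp \om}$. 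Strict convexity I would defer until after (ii), where it drops out of strict monotonicity of $E(\beta)$.

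For (ii), Theorem \ref{thm5.1} combined with Remark \ref{MFE-equiv} puts the attainer $\rho_\beta$ of the sup defining $F(\beta)$ in bijection with an $H^1_0(\om)$-weak solution $v_\lambda$ of $P(\lambda,\om)$, $\lambda=-\beta$; in the second-kind case Theorem \ref{Thm5.3} extends this bijection up to $\lambda=8\pi$. Theorem \ref{Thm1.2} then supplies uniqueness of this solution for every $\lambda\in(0,8\pi]$, together with strict positivity of the first eigenvalue of the linearized equation. Uniqueness of the attainer forces the subdifferential of the convex $F$ to be a singleton at each $\beta\in(-8\pi,0)$, so by the standard envelope argument $F$ is differentiable there with $F'(\beta)=-\mathcal{E}(\rho_\beta)=-E(\beta)$. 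The non-degeneracy provided by Theorem \ref{Thm1.2} lets me apply the implicit function theorem to obtain a $C^1$ branch $\beta\mapsto\rho_\beta\in H^1_0(\om)$, differentiate the MFE along it and, as in \cite{kof03}, deduce $E'(\beta)<0$; strict convexity of $F$ and the fact that $E$ is a continuous, strictly monotone bijection onto the admissible energy range follow.

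Part (iii) is then Legendre--Fenchel duality applied to the strictly convex, smooth, decreasing $F$: the conjugate is a smooth, strictly concave function of $E$, and the identity $S(E)=\inf_\beta\{F(\beta)+\beta E\}$ follows from (i)--(ii) by a direct evaluation at the unique optimizer. For (iv), I would argue by uniqueness: given $E$ in the admissible range, Theorem \ref{Thm5.2} supplies a maximizer $\rho^{(E)}$ of \eqref{mvp} solving an MFE $Q(\beta',\om)$ for some Lagrange multiplier $\beta'$; matching energies via (ii) forces $\beta'=\beta(E)$, and then Remark \ref{MFE-equiv} together with Theorem \ref{Thm1.2} force $\rho^{(E)}=\rho_{\beta(E)}$, which is the equivalence of ensembles.

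The main---and in fact only---serious obstacle is precisely the invocation of Theorem \ref{Thm1.2}. Without uniqueness and strict positivity of the first eigenvalue of the linearized MFE on multiply-connected domains (itself a non-trivial consequence of the new Bol's inequality, Theorem \ref{Thm1.3}), the subdifferential of $F$ need not be a singleton, the implicit function argument for the smooth branch $\beta\mapsto\rho_\beta$ fails, and the microcanonical and canonical maximizers cannot be identified in (iv). All remaining ingredients---Jensen, the envelope theorem, Legendre--Fenchel duality, and the Moser--Trudinger inequality---are the same as in the simply-connected argument of \cite{kof03} and I would borrow them without further comment.
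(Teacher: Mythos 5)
Your proposal is correct and follows essentially the same route as the paper, which likewise reduces everything to Proposition 3.3 of \cite{kof03} with Theorem \ref{Thm1.2} replacing Suzuki's uniqueness, proving (ii) via the two-sided difference-quotient bounds $-E(\be_2)\leq (F(\be_2)-F(\be_1))/(\be_2-\be_1)\leq -E(\be_1)$ together with continuity of $\be\mapsto\rho_{\scp\be}$. The only caveat is that uniqueness of the maximizer at a fixed $\be$ does not by itself make the subdifferential of $F$ a singleton --- one also needs that maximizers at nearby $\be'$ converge to the maximizer at $\be$ --- but you supply exactly this through the nondegeneracy and implicit-function-theorem branch, so the argument closes.
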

\proof
Of course, we restrict our attention to the case where $\om$ is multiply-connected, the other case
being already included in \cite{kof03} Proposition 3.3.\\
(i) The proof can be worked out as in Proposition 7.3 in \cite{kof02}.\\
(ii) We argue as in \cite{kof03} making use of Theorem \ref{Thm1.2} above.
Let $-8\pi<\be_i<0$, $i=1,2$ and $\rho_i$, $i=1,2$ be the corresponding maximizers
of \eqref{cvp}. Clearly
$$
E(\be_i)=\mathcal{E}(\rho_i),\quad i=1,2.
$$
Hence
$$
F(\be_2)\geq \mathcal{S}(\rho_1)-\be_2\mathcal{E}(\rho_1)\equiv F(\be_1)-(\be_2-\be_1)E(\be_1),
$$
$$
F(\be_1)\geq \mathcal{S}(\rho_2)-\be_1\mathcal{E}(\rho_2)\equiv F(\be_2)-(\be_1-\be_2)E(\be_2).
$$
\noindent
The last two inequalities imply
$$
-E(\be_2)\leq \dfrac{F(\be_2)-F(\be_1)}{\be_2-\be_1}\leq -E(\be_1),\quad \mbox{if}\quad \be_1>\be_2,
$$
and
$$
-E(\be_1)\leq \dfrac{F(\be_2)-F(\be_1)}{\be_2-\be_1}\leq -E(\be_2),\quad \mbox{if}\quad \be_2>\be_1.
$$
\noindent
Let $v_i:=G[\rho_i]$, $i=1,2$ be the corresponding solutions of $P(\lm_i,\om)$, $\lm_i=-\be_i$, $i=1,2$.
Then, in view of Remark \ref{MFE-equiv} and Theorem \ref{Thm1.2}
we see that, as $\be_1\to\be_2$, then $v_1:=G[\rho_1]\to v_2:=G[\rho_2]$ in $H^1_0(\om)$. In particular
it is not difficult to verify that
$$
E(\be_1)=\frac12 \ino \rho_1 v_1\to \frac12 \ino \rho_2 v_2=E(\be_2),\quad \mbox{as}\quad \be_1\to\be_2.
$$
Therefore we conclude that
$$
-F^{'}(\be)=E(\be)=\frac12 \ino \rho_{\scp \be} G[\rho_{\scp \be}],\quad \forall\, \be\in(-8\pi,0).
$$
\noindent
The continuity of $E(\be)$ follows once more from Theorem \ref{Thm1.2}.
Finally $E=-F^{'}$ is strictly monotone and decreasing (hence a bijection)
since $F$ is strictly convex.\\
(iii)-(iv) In view of (ii), the proof provided in Proposition 3.3(iii)-(iv) of \cite{kof03} works exactly as it stands.
\fineproof

\end{document}